\newcommand{\vabs}[1]{\ensuremath{\left\lvert #1 \right\rvert}}
\newcommand{\vpren}[1]{\ensuremath{\left( #1 \right)}}
\newcommand{\norm}[1]{\ensuremath{\lVert #1 \rVert}}
\newcommand{\abs}[1]{\ensuremath{\lvert #1 \rvert}}
\newcommand{\angles}[1]{\ensuremath{\langle #1 \rangle}}
\newcommand{\pren}[1]{\ensuremath{( #1 )}}
\mathchardef\expandafter\varphi\number\expandafter\phi\expandafter\relax
\mathchardef\expandafter\phi\number\varphi
\newcommand{\im}{\ensuremath{\mathrm{i}}}
\newcommand{\e}{\ensuremath{\mathrm{e}}}
\renewcommand{\tilde}{\widetilde}
\newcommand{\N}{\mathbb{N}}
\newcommand{\Hil}{\mathcal{H}}
\newcommand{\Kil}{\mathcal{K}}
\newcommand\iprec{\mathrel{\ooalign{$\prec$\cr
 \,\raise0.85ex\hbox{\scriptsize$\circ$}\cr}}}
\renewcommand{\emptyset}{\varnothing}
\renewcommand{\epsilon}{\varepsilon}
\DeclareMathOperator{\ran}{ran}
\DeclareMathOperator{\diag}{diag}
\DeclareMathOperator{\trace}{Tr}
\DeclareMathOperator{\spans}{span}
\setlist[enumerate]{label=(\roman*)}
\numberwithin{equation}{section}
\theoremstyle{plain} %% This is the default, anyway
\newtheorem{theorem}{Theorem}[section]
\newtheorem{corollary}[theorem]{Corollary}
\newtheorem{lemma}[theorem]{Lemma}
\newtheorem{proposition}[theorem]{Proposition}
\newtheorem{question}[theorem]{Question}
\theoremstyle{definition}
\newtheorem{definition}[theorem]{Definition}
\theoremstyle{remark}
\newtheorem{remark}[theorem]{Remark}
\newtheorem{notation}[theorem]{Notation}
\title{Thompson's theorem for compact operators and diagonals of unitary operators}
\author{John Jasper}
\email[John Jasper]{john.jasper@uc.edu}
\author{Jireh Loreaux}
\email[Jireh Loreaux]{loreaujy@mail.uc.edu}
\author{Gary Weiss$^{*}$}
\email[Gary Weiss]{gary.weiss@uc.edu}
\thanks{$^{*}$Partially supported by 
Simons Foundation Collaboration Grant for Mathematicians \#245014 and the Charles Phelps Taft Research Center.}
\begin{document}

\begin{abstract}
  As applications of Kadison's Pythageorean and carpenter's theorems, the Schur--Horn theorem, and Thompson's theorem, we obtain an extension of Thompson's theorem to compact operators and use these ideas to give a  characterization of diagonals of unitary operators.
  Thompson's mysterious inequality concerning the last terms of the diagonal and singular value sequences plays a central role.
\end{abstract}

\subjclass[2010]{47A12, 15A18 (Primary), and 15A42 (Secondary)}

\maketitle

\section{Introduction}

The last century, especially the past 15 years, saw significant advances toward characterizing the diagonal sequences of various types of operators and classes of operators.
That is, given an operator $A$ (or a class of operators $\mathcal{A}$), the goal is to classify the sequences of its inner products $\pren{\angles{Ae_j,e_j}}_{j=1}^{\infty}$ for all orthonormal bases $\mathfrak{e} = \{e_j\}_{j=1}^{\infty}$ (for $A \in \mathcal{A}$).
Equivalently, given an operator $A$ and a fixed orthonormal basis $\mathfrak{e}$, identify all sequences $\pren{\angles{UAU^{*}e_j,e_j}}_{j=1}^{\infty}$ as $U$ ranges over all unitary operators (i.e., identify the image under the canonical trace-preserving conditional expectation of its unitary orbit).
This turned out to provide tools for also characterizing diagonal sequences for various important classes of operators (i.e., the expectation of unitary orbits of classes).
These questions grow out of Schur \cite{Sch-1923-SBMG} and Horn \cite{Hor-1954-AJM} whose combined work completely characterized the diagonal sequences of a selfadjoint matrix in $M_N(\mathbb{C})$ in terms of its eigenvalue sequence (see \autoref{thm:schur-horn}).
Moreover, Horn's proof yields the same result over $M_N(\mathbb{R})$ and we also include that here.

Note that an operator $A$ acting on a complex Hilbert space $\Hil_{\mathbb{C}}$ with a real-valued matrix representation in a basis $\mathfrak{e}$ restricts to an operator $A_{\mathbb{R}}$ acting on the real Hilbert space $\Hil_{\mathbb{R}} := \overline{\spans_{\mathbb{R}} \mathfrak{e}}$ with the same matrix representation.
Conversely, an operator $A_{\mathbb{R}}$ acting on $\Hil_{\mathbb{R}}$ extends naturally by linearity to the complexification $\Hil_{\mathbb{C}}$ and retains its matrix representation.
Thus for problems whose solutions depend on the existence of a specified matrix representation, finding a representation with real-valued entries is equivalent to solving the problem over a real Hilbert space, a convenient recurrent theme in this paper.

\begin{theorem}[Schur--Horn theorem \protect{\cite{Hor-1954-AJM,Sch-1923-SBMG}}]
  \label{thm:schur-horn}
  There is an $N\times N$ selfadjoint matrix in $M_N(\mathbb{C})$ (or even $M_N(\mathbb{R})$) with eigenvalue sequence $\bm{\uplambda}$ (i.e., in the unitary orbit of $\diag \lambda$) and diagonal $\mathbf{d}$ in $\mathbb{R}^N$ if and only if, for their nonincreasing rearrangements $\mathbf{d}^{*}, \bm{\uplambda}^{*}$,
  \begin{equation*}
    \sum_{i=1}^k d^{*}_i \leq \sum_{i=1}^k \lambda^{*}_i \quad\text{for } k=1,\ldots,N,
  \end{equation*}
  with equality when $k = N$.
\end{theorem}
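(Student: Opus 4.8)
The plan is to prove the two implications separately, arranging matters so that the real and complex cases are dispatched simultaneously. For necessity, suppose $A$ is selfadjoint and lies in the unitary orbit of $\diag\bm{\uplambda}$, with diagonal $\mathbf d$. Fix $k$ and let $P$ be the coordinate projection onto the $k$ basis vectors $e_i$ indexed by the $k$ largest entries of $\mathbf d$, so that $\sum_{i=1}^{k} d^{*}_i = \trace(PAP)$. Expanding $A = \sum_{l} \lambda^{*}_l\, q_l q_l^{*}$ in an orthonormal eigenbasis gives $\trace(PAP) = \sum_l \lambda^{*}_l \norm{Pq_l}^2$, where the weights $t_l := \norm{Pq_l}^2$ lie in $[0,1]$ and sum to $\trace P = k$; hence $\trace(PAP) \le \sum_{l=1}^{k} \lambda^{*}_l$. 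This yields $\sum_{i=1}^{k} d^{*}_i \le \sum_{i=1}^{k} \lambda^{*}_i$, with equality at $k = N$ since both sides equal $\trace A$. (Equivalently, $\mathbf d = S\bm{\uplambda}$ where $S = \pren{\abs{U_{ij}}^2}$ is doubly stochastic.) The argument is unchanged if $A$ is merely real symmetric.

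For sufficiency I would reduce everything to one elementary observation about plane rotations. Let $M$ be $N\times N$ selfadjoint (resp.\ real symmetric), fix indices $i \ne j$, and let $R_\theta$ be the rotation by angle $\theta$ in the span of $e_i$ and $e_j$, acting as the identity on the remaining coordinates. A routine $2\times 2$ computation shows that the conjugation $M \mapsto R_\theta M R_\theta^{*}$ preserves the spectrum, fixes every diagonal entry except those in positions $i$ and $j$, and, as $\theta$ ranges over $[0,\pi)$, moves the $(i,i)$ entry continuously over an interval containing $\bigl[\min\{M_{ii},M_{jj}\},\ \max\{M_{ii},M_{jj}\}\bigr]$, while the $(j,j)$ entry stays equal to $M_{ii}+M_{jj}$ minus the current $(i,i)$ entry. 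Hence, applying a finite sequence of such rotations to $\diag\bm{\uplambda}$, one can perform on the diagonal any finite composition of ``Robin Hood'' transfers between pairs of coordinates --- the choice $\theta=\pi/2$ simply transposes two coordinates --- all the while keeping the spectrum equal to $\bm{\uplambda}$. It therefore suffices to prove: if $\mathbf d \prec \bm{\uplambda}$, then $\mathbf d$ is reachable from $\bm{\uplambda}$ by finitely many such transfers. Feeding the corresponding rotations into $\diag\bm{\uplambda}$ then produces a matrix --- real symmetric in the real case --- with spectrum $\bm{\uplambda}$ and diagonal precisely $\mathbf d$.

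The statement that $\mathbf d \prec \bm{\uplambda}$ implies $\mathbf d$ is obtained from $\bm{\uplambda}$ by finitely many Robin Hood transfers is the classical lemma of Hardy, Littlewood and Pólya, which I would prove by induction. Sort $\mathbf d$ and $\bm{\uplambda}$ nonincreasingly; if they agree, stop. Otherwise let $j$ and $k$ be the first and last coordinates at which they differ. Comparing partial sums forces $d_j < \lambda_j$ and $d_k > \lambda_k$, and necessarily $j < k$; since $\mathbf d$ is nonincreasing, $d_j \ge d_k > \lambda_k$, so $\delta := \min\{\lambda_j - d_j,\ d_k - \lambda_k\}$ is positive and the transfer lowering $\lambda_j$ by $\delta$ and raising $\lambda_k$ by $\delta$ is legitimate; after it, at least one of coordinates $j$ and $k$ matches $\mathbf d$. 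The resulting vector $\bm{\uplambda}'$ satisfies $\mathbf d \prec \bm{\uplambda}' \prec \bm{\uplambda}$, and a monovariant --- for instance the number of coordinates in which the re-sorted vector disagrees with $\mathbf d$, or $\sum_i \abs{d_i - \lambda_i}$ with both vectors sorted --- strictly decreases, so finitely many transfers reach $\mathbf d$.

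The main obstacle is exactly this last bookkeeping: one must verify that the single transfer keeps $\mathbf d$ majorized by the new vector and that the chosen monovariant genuinely drops once the vector is re-sorted, which requires tracking how the partial sums shift when one entry decreases and another increases. Everything else --- the $2\times 2$ rotations, the transpositions, the eigenvalue estimate --- is real-linear, and this is precisely why the theorem holds verbatim over $M_N(\mathbb R)$ with no modification.
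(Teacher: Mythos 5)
The paper does not prove \autoref{thm:schur-horn}; it is quoted from Schur and Horn, so there is no internal proof to compare against. Your necessity argument (the doubly stochastic / $\trace(PAP)=\sum_l \lambda^{*}_l\norm{Pq_l}^2$ computation) and the $2\times 2$ rotation mechanism are correct and standard. The genuine gap is in the step you yourself flag as ``the main obstacle'': taking $j$ and $k$ to be the \emph{first and last} coordinates at which the sorted vectors differ does \emph{not} preserve majorization. Take $\bm{\uplambda}=(4,3,2,1)$ and $\mathbf d=(3.5,\,3.5,\,1.5,\,1.5)$, so $\mathbf d\prec\bm{\uplambda}$. Your rule gives $j=1$, $k=4$, $\delta=\min\{0.5,0.5\}=0.5$, and the transfer produces $\bm{\uplambda}'=(3.5,\,3,\,2,\,1.5)$; but $d_1+d_2=7>6.5=\lambda'_1+\lambda'_2$, so $\mathbf d\not\prec\bm{\uplambda}'$. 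Since every Robin Hood transfer can only move a vector \emph{down} in the majorization order, $\mathbf d$ has become unreachable and the induction cannot continue. Note that your proposed monovariants do decrease in this example, so termination is not the issue --- reachability is.

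The repair is the classical one: keep $j$ as the first index with $d_j\neq\lambda_j$ (so $d_j<\lambda_j$), but take $k$ to be the \emph{smallest} index greater than $j$ with $d_k>\lambda_k$, so that $d_i\le\lambda_i$ for all $j\le i<k$. Then for $j\le m<k$ one has $\sum_{i=1}^{m}(\lambda_i-d_i)\ge\lambda_j-d_j\ge\delta$, which is exactly what is needed to keep $\mathbf d\prec\bm{\uplambda}'$ (the partial sums for $m<j$ are untouched and those for $m\ge k$ are unchanged because the $-\delta$ and $+\delta$ cancel; passing to the nonincreasing rearrangement of $\bm{\uplambda}'$ only increases its partial sums). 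With that choice the rest of your argument survives verbatim: the new values satisfy $\lambda_j-\delta\ge d_j\ge d_k>\lambda_k$ and $\lambda_k+\delta\le d_k\le d_j<\lambda_j$, so they lie in $[\lambda_k,\lambda_j]$ and are realizable by a plane rotation; at least one of the two coordinates now agrees with $\mathbf d$, giving a strictly decreasing count of disagreements; and since all rotations are real, the $M_N(\mathbb R)$ case comes for free.
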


The Schur--Horn theorem, as it has come to be known, has inspired many types of extensions.
The earliest were probably due to Markus \cite{Mar-1964-UMN} and Gohberg and Markus \cite{GM-1964-MSN} who proved a version for selfadjoint trace-class operators without specifying the number of zeros on the diagonal.
In \cite{Neu-1999-JFA}, Neumann characterized the set of diagonals of any selfadjoint operator up to the closure in the $\ell^{\infty}$ norm.
However, it was soon understood that this particular characterization sometimes loses subtle information as identified by Kadison in \cite{Kad-2002-PNASU,Kad-2002-PNASUa} where he proved an infinite dimensional version of the Pythagorean theorem and its converse, which he referred to as the carpenter's theorem.
These theorems of Kadison completely describe the diagonals of projections and include a subtle integer condition when the diagonals accumulate summably at 0 and 1 (see \autoref{thm:kadison} for details).

\begin{theorem}[Pythagorean and carpenter's theorems \protect{\cite{Kad-2002-PNASU,Kad-2002-PNASUa}}]
  \label{thm:kadison}
  A sequence $\mathbf{d}$ is the diagonal of a projection if and only if it takes values in $[0,1]$ and for
  \begin{equation*}
    a := \sum_{d_j < \frac{1}{2}} d_j
    \quad\text{and}\quad
    b := \sum_{d_j \ge \frac{1}{2}} (1-d_j),
  \end{equation*}
  either
  \begin{enumerate}
  \item $a + b = \infty$ or
  \item $a + b < \infty$ and $a - b \in \mathbb{Z}$.
  \end{enumerate}
\end{theorem}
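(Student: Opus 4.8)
The plan is to prove the two implications separately, disposing of the range condition at once. \emph{Necessity:} if $P$ is a projection with diagonal $\mathbf d$ in an orthonormal basis $\{e_j\}$, then $d_j = \angles{Pe_j,e_j} = \norm{Pe_j}^2$ and $1-d_j = \norm{(1-P)e_j}^2$, so $\mathbf d$ takes values in $[0,1]$. For the integer condition, assume $a+b<\infty$ and split the index set as $I = \{j : d_j < \tfrac12\}$, $J = \{j : d_j \ge \tfrac12\}$; let $E$ and $F = 1-E$ be the coordinate projections onto $\overline{\spans\{e_j : j \in I\}}$ and $\overline{\spans\{e_j : j \in J\}}$. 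The hypothesis says precisely that $PE$ and $(1-P)F$ are Hilbert--Schmidt with squared norms $a$ and $b$, so $PEP = (PE)(PE)^{*}$ and $F(1-P)F = \bigl((1-P)F\bigr)^{*}(1-P)F$ are trace class with traces $a$ and $b$. The key step is to examine the compression $C \colon \ran P \to \ran F$, $Cv := Fv$: since $C^{*}C$ and $CC^{*}$ equal, on $\ran P$ and $\ran F$ respectively, the identity minus one of these trace-class operators, $C$ is Fredholm. Writing $C = U\abs{C}$ for its polar decomposition, the partial isometry $U$ conjugates $PEP$ on $(\ker C)^{\perp}$ onto $F(1-P)F$ on $(\ker C^{*})^{\perp}$, so those restrictions have equal trace; subtracting the two trace identities and peeling off the eigenvalue $1$ (accounted for by the finite-dimensional spaces $\ker C$ and $\ker C^{*}$) leaves $a - b = \dim\ker C - \dim\ker C^{*} = \operatorname{index}C \in \Z$. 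The point is that although $P-F$ need not be trace class, the relative index of $P$ and $F$ is well defined and equals $a-b$.

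For \emph{sufficiency}, I would first strip off every entry equal to $0$ or $1$ as a summand of $\ker P$ or $\ran P$ (legitimate since $d_j\in\{0,1\}$ forces $e_j$ into the corresponding reducing subspace); this changes neither $a$ nor $b$, so I may assume $d_j \in (0,1)$ for all $j$. The basic tool is the finite carpenter's theorem, an immediate corollary of \autoref{thm:schur-horn}: a tuple in $[0,1]^n$ is the diagonal of a projection if and only if its sum is a nonnegative integer $k$, since then it is majorized by $(1^k,0^{n-k})$; in particular this covers the case of finitely many indices. It then suffices to partition the index set into blocks, realize $\mathbf d$ on each block, and form the orthogonal direct sum. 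Finite blocks with integer sub-sums are handled by the finite carpenter's theorem; when the entries accumulate summably at $0$ or $1$ --- the situation that forces condition (ii), and in which no finite integer-sum partition need exist --- I would realize the relevant infinite block directly: a rank-one projection $vv^{*}$ with $\norm{v_j}^2 = d_j$ when that block sums to $1$, and a higher-rank projection (for instance, when the entries are constant, multiplication by the indicator of an arc of the appropriate length on the circle) in general, with the value of $a-b$ dictating how the ranks of these infinite pieces must balance. Under condition (i), divergence of $a+b$ instead supplies the slack needed to carry out the partition with every block sum an integer.

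I expect the main obstacle to be the sufficiency: organizing the partition so that the accumulation of $\mathbf d$ near $0$, near $1$, and away from both is matched consistently to finite Schur--Horn blocks and explicit infinite pieces, while in case (ii) keeping the ranks of those pieces in lockstep with $a-b\in\Z$. By contrast, once the Fredholm operator $C$ and the polar-decomposition comparison are in place, the necessity direction should be routine.
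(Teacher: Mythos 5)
The paper does not prove this statement; \autoref{thm:kadison} is quoted from Kadison's two PNAS papers as background, so there is no internal proof to compare against. Taking your argument on its own terms: the necessity half is correct and complete in outline. The observation that $a+b<\infty$ makes $PE$ and $(1-P)F$ Hilbert--Schmidt, that the compression $C=F|_{\ran P}:\ran P\to\ran F$ then satisfies $C^{*}C=I-PEP|_{\ran P}$ and $CC^{*}=I-F(1-P)F|_{\ran F}$ with trace-class perturbations of traces $a$ and $b$, and that the polar decomposition identifies the traces off the kernels so that $a-b=\operatorname{index}C\in\Z$ --- this is exactly the essential-codimension/Fredholm-index route that has become the standard modern proof of the integrality condition, and every step you cite checks out.

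The sufficiency half, however, has a genuine gap, and it sits precisely where the real content of the carpenter's theorem lies. Your plan for case (i) is to ``carry out the partition with every block sum an integer,'' but divergence of $a+b$ does not make such a partition possible: for $d_j\equiv 1/\pi$ every finite sub-sum is $k/\pi\notin\Z$, so no finite block has integer sum and the finite carpenter's theorem never applies. (Constant sequences happen to be rescued by your arc-indicator construction, but a generic non-constant sequence with $a=\infty$, $b=0$ is covered by neither device.) What is needed here is a mechanism for passing non-integer remainders between consecutive blocks --- e.g.\ splitting a boundary entry $d_j$ into two entries that recombine under a further rotation, or an explicit infinite-dimensional construction followed by a strong-limit argument --- and that mechanism is the heart of Kadison's proof of his Theorem 15. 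Similarly, in case (ii) the assertion that ``the value of $a-b$ dictat[es] how the ranks of these infinite pieces must balance'' names the phenomenon without supplying the construction: after stripping the $0$'s and $1$'s you must exhibit a projection whose diagonal accumulates summably at both $0$ and $1$ with the prescribed integer defect, and the rank-one case $\sum d_j=1$ is the only instance you actually build. As written, the sufficiency direction is a plausible outline whose first concrete step fails, not a proof.
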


This topic has flourished within the last decade with contributions by Arveson and Kadison \cite{AK-2006-OTOAaA} (positive trace-class operators), Kaftal and Weiss \cite{KW-2010-JFA} (positive compact operators), Bownik and Jasper \cite{Jas-2013-JFA,BJ-2013-TAMS} (selfadjoint operators with finite spectrum), and Loreaux and Weiss \cite{LW-2015-JFA} (positive compact operators with nonzero kernel).
Moreover, the Schur--Horn theorem has extensions to von Neumann algebras first proposed by Kadison for projections in type $\mathrm{II}_1$ factors \cite{Kad-2002-PNASU} and by Arveson and Kadison for selfadjoint operators in $\mathrm{II}_1$ factors \cite{AK-2006-OTOAaA}.
Some of the work produced along these lines includes papers by Argerami and Massey \cite{AM-2007-IUMJ,AM-2008-JMAA,AM-2013-PJM} (a contractive version and approximations in both $\mathrm{II}_1$ and $\mathrm{II}_{\infty}$ factors), Bhat and Ravichandran \cite{BR-2014-PAMS} (selfadjoint operators with finite spectrum in $\mathrm{II}_1$ factors), Dykema, Fang, Hadwin and Smith \cite{DFHS-2012-IJM} (certain masa/factor pairs), as well as the as yet unpublished work of Ravichandran \cite{Rav-2012} (general von Neumann algebras) and Massey and Ravichandran \cite{MR-2014} (several commuting selfadjoint operators).

While there remains work to be done on the topic of selfadjoint operators, it must be noted that interest in diagonals extends to normal operators as well.
In fact, Horn's original reason for investigating the selfadjoint case was merely to provide a tool to access the diagonals of rotation, orthogonal and unitary matrices (i.e., elements of $SO(N), O(N)$ and $U(N)$ respectively) \cite[Theorems 8--11]{Hor-1954-AJM}.
An important point to make here is that Horn did not classify the diagonals of any individual matrix from these three classes but rather the diagonals of each entire class, that is, the union of the diagonals of the matrices in each class.
One of the goals in this paper is to extend Horn's result about the diagonals of the class of unitary operators \cite[Theorem 11]{Hor-1954-AJM} to the infinite dimensional setting culminating in our \autoref{thm:unitary-thompson-sufficiency} below which we state in full generality, although it easily reduces to the nonnegative case via \autoref{prop:nonnegative-diagonals-suffice}.

\begin{restatable*}[Diagonals of the class of unitary operators]{theorem}{unitary}
  \label{thm:unitary-thompson-sufficiency}
  A complex-valued sequence $\mathbf{d}$ is the diagonal of a unitary operator if and only if $\abs{\mathbf{d}}$ is bounded above by one and
  \begin{equation}
    \label{eq:unitary-thompson-condition}
    2 \vpren{1-\inf_{j \in \mathbb{N}} \abs{d_j}} \le \sum_{j \in \mathbb{N}} \big( 1-\abs{d_j} \big).
  \end{equation}
  Moreover, if $\mathbf{d}$ is real-valued then the same statement holds over real Hilbert space.
\end{restatable*}

Unitaries are a special class of normal matrices, but Horn observed in the $3 \times 3$ normal case that the set of diagonals is, in general, not convex.
This dashed hope of a straightforward generalization of the Schur--Horn theorem because of its equivalent formulation in which the diagonals are the convex hull of permutations of the eigenvalue sequence, as Horn states in \cite{Hor-1954-AJM}.
The $3 \times 3$ normal case was solved by Williams \cite{Wil-1971-JLMS2}, but subsequent work on diagonals of normal operators has stalled almost entirely.
There are three notable exceptions.
First, on a separable infinite dimensional Hilbert space, Arveson \cite{Arv-2007-PNASU} provided a necessary condition for a sequence to be a diagonal of a normal operator with finite spectrum that forms the vertices of a convex polygon.
Second, Kennedy and Skoufranis \cite{KS-2014} have obtained a result in $\mathrm{II}_1$ factors for diagonals (conditional expectation of the unitary orbit onto masas) of normal operators.
Finally, Massey and Ravichandran \cite{MR-2014} used their own work on multivariable Schur--Horn theorems to provide certain approximate results on diagonals of normal operators in Type I factors by considering appropriate dilations of the algebra.

In spite of these difficulties encountered for normal operators, there has been progress in other directions by studying classes of operators instead of single operators and not restricting the operators in these classes to be normal.
For example, the work of Fong \cite{Fon-1986-PEMS2} shows that the diagonals of the class of nilpotent operators consists of all bounded sequences, while Loreaux and Weiss \cite{LW-2014} prove the same result for idempotent operators and moreover that the diagonals of finite rank idempotent operators consist precisely of those absolutely summable sequences whose sum is a positive integer (necessarily equal to the rank).
Additionally, the results showing that the class of nilpotents and the class of idempotents admit all bounded sequences as diagonals can be obtained as corollaries of the so-called pinching theorem due to Bourin \cite{Bou-2003-JOT}. The pinching theorem also provides information about some of the diagonals of a specified operator whenever its essential numerical range has nonempty interior.

Another finite dimensional result in this line of investigation is especially interesting because of its similarity to the Schur--Horn theorem.
This is due to Thompson \cite[Theorem~1 and Corollary~1]{Tho-1977-SJAM} and independently for dimension 2 to Sing \cite{Sin-1976-CMB}.
Thompson's theorem characterizes the diagonals of the class of operators with specified singular value sequence instead of specified eigenvalue sequence, as in the Schur--Horn theorem.

\begin{theorem}[Thompson \protect{\cite{Tho-1977-SJAM}}]
  \label{thm:thompson}
  Let $\mathbf s=\pren{s_{i}}_{i=1}^{N}$ be a nonincreasing sequence and $\mathbf d = \pren{d_{i}}_{i=1}^{N}$ a complex-valued sequence.
  There is an $N\times N$ matrix $A$ with singular value sequence $\mathbf s$ and diagonal $\mathbf d$ if and only if for the monotone nonincreasing rearrangment $\abs{\mathbf{d}}^{*} = (\abs{d}_1^{*}, \ldots, \abs{d}_N^{*})$ of the sequence of moduli of $\mathbf{d}$,
  \begin{equation*}
    \sum_{i=1}^k \abs{d}_i^{*} \leq \sum_{i=1}^k s_{i} \quad\text{for }k=1,\ldots,N
  \end{equation*}
  and
  \begin{equation*}
    \sum_{i=1}^{N-1} \abs{d}_i^{*} - \abs{d}_N^{*} \leq \sum_{i=1}^{N-1} s_i-s_N.
  \end{equation*}
  Moreover, if $\mathbf{d}$ is real-valued, we may choose the matrix $A$ to have real-valued entries.
\end{theorem}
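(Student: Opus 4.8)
The plan is to reduce to the case of a nonnegative nonincreasing diagonal, deduce the two inequalities from trace estimates, and prove sufficiency by induction on $N$ with an explicit base case; the real refinement then falls out because every step can be carried out with real matrices. For the reduction, note that conjugating $A$ by a permutation matrix permutes the diagonal and preserves the singular values, while left multiplication by a diagonal unitary $\diag(\omega_j)$ replaces $\mathbf d$ by $(\omega_j d_j)$ and again preserves the singular values; taking $\omega_j=\overline{d_j}/\abs{d_j}$ (and $\omega_j=1$ when $d_j=0$) and then sorting, we may assume $\mathbf d=\abs{\mathbf d}^{*}$, i.e. $d_1\ge\cdots\ge d_N\ge 0$. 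When $\mathbf d$ is real these moves are real orthogonal, so in that case it suffices to work in $M_N(\R)$, and the two displayed conditions are untouched by the reduction.

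\emph{Necessity.} For $E_k=\diag(1,\dots,1,0,\dots,0)$ ($k$ ones) one has $\sum_{i=1}^k d_i=\trace(E_k A)$ and $E_k$ has singular values $(1,\dots,1,0,\dots,0)$, so von Neumann's trace inequality gives $\sum_{i=1}^k d_i\le\sum_{i=1}^k s_i$. The second inequality is the subtle one. Write $A=U\Sigma V^{*}$ with $\Sigma=\diag\mathbf s$; in the real case take $Q=V^{*}FU$, where $F=\diag(1,\dots,1,-1)$ if $\det A\ge 0$ and $F=I$ if $\det A<0$, so that $Q$ is an \emph{improper} orthogonal matrix. From $\trace Q\le N-2$ together with $\abs{Q_{jj}}\le 1$, a one-line linear-programming estimate gives $\trace(FA)=\sum_j Q_{jj}s_j\le s_1+\cdots+s_{N-1}-s_N$, and since $\trace(FA)$ equals $\sum_{i=1}^{N-1}d_i-d_N$ in the first case and equals $\sum_{i=1}^N d_i\ge\sum_{i=1}^{N-1}d_i-d_N$ in the second (using $d_N\ge 0$), the desired inequality follows; over $\C$ the same inequality is Thompson's original estimate. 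Note that the $2N\times 2N$ selfadjoint dilation $\bigl(\begin{smallmatrix}0&A\\A^{*}&0\end{smallmatrix}\bigr)$, which has eigenvalues $\pm\mathbf s$ and to which the Schur--Horn theorem applies, recovers only the first family, which is precisely why the second inequality requires this sign-sensitive sharpening.

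\emph{Sufficiency.} Assume the inequalities hold with $\mathbf d=\abs{\mathbf d}^{*}$. If $\sum_{i=1}^k d_i=\sum_{i=1}^k s_i$ for some $k<N$, then $(d_1,\dots,d_k)\prec(s_1,\dots,s_k)$, so the Schur--Horn theorem supplies a $k\times k$ positive semidefinite matrix (real, if $\mathbf d$ is real) with eigenvalues $(s_1,\dots,s_k)$ and diagonal $(d_1,\dots,d_k)$; subtracting that equality from the full system leaves $(d_{k+1},\dots,d_N)$ and $(s_{k+1},\dots,s_N)$ again satisfying Thompson's two conditions, which induction on $N$ realizes, and the orthogonal direct sum of the two blocks is the required $A$. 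So we may assume the first family is strict for every $k<N$. If in addition $\sum_{i=1}^N d_i=\sum_{i=1}^N s_i$, then $\mathbf d\prec\mathbf s$ and the Schur--Horn theorem alone finishes it; otherwise one is in the extreme configuration, where a short further reduction using the equality case of the second inequality (which, when forced, lets one trade $s_N$ for $-s_N$) normalizes the data so that $A$ can be assembled explicitly from $\Sigma$ and a bounded number of Givens rotations --- for $N=2$ this is exactly Sing's matrix --- the single remaining rotation angle being pinned down by the intermediate value theorem. Since only real rotations, real diagonal matrices, and the real form of the Schur--Horn theorem enter, the output lies in $M_N(\R)$ whenever $\mathbf d$ is real, which is the final assertion.

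\emph{Main obstacle.} The weight of the argument sits on the second inequality. On the necessity side it is the one place where von Neumann's trace inequality and the Schur--Horn theorem applied to the natural dilation are not sharp enough, so one must bring in the improper-orthogonal (sign) refinement. On the sufficiency side it is what controls the extreme case: extracting the correct normal form from the tightness pattern, and then verifying that the explicitly written matrix has \emph{exactly} the prescribed singular values and not merely the correct diagonal, is the delicate computation; once that base case is secured, propagating reality through the induction is routine bookkeeping.
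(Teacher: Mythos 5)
First, a point of reference: the paper does not prove this theorem at all --- it is imported verbatim from Thompson's 1977 paper (and Sing for $N=2$) and used as a black box, so there is no ``paper proof'' to compare against. Judged on its own terms, your proposal has two genuine gaps. The first is in the necessity of the second inequality over $\C$. Your improper-orthogonal argument is a nice, correct argument over $\R$: an improper orthogonal $Q$ has an odd number of eigenvalues equal to $-1$, hence $\trace Q\le N-2$, and the LP step then correctly yields $\sum_j Q_{jj}s_j\le\sum_{j=1}^{N-1}s_j-s_N$. But for a complex unitary $Q$ with $\det Q=-1$ nothing like $\mathrm{Re}\,\trace Q\le N-2$ holds (consider $\e^{\im\pi/N}I$), so the mechanism collapses, and your sentence ``over $\C$ the same inequality is Thompson's original estimate'' simply cites the theorem you are trying to prove. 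Thompson's actual complex argument goes through an inductive sequence of $2\times 2$ compressions (his Lemmas 3--5, which this paper quotes separately as \autoref{lem:2x2-trace-equal-trace-norm} and \autoref{lem:2x2-tight-selfadjoint}-type statements); none of that is reproduced or replaced here.

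The second and larger gap is in sufficiency. Your reductions correctly dispose of the cases where some partial sum is tight (split off a Schur--Horn block) and where the total sum is tight (pure Schur--Horn). But the remaining case --- all inequalities strict --- is the \emph{generic} configuration, not an ``extreme'' one, and it is exactly where the entire constructive content of Thompson's theorem lives. ``A short further reduction\ldots so that $A$ can be assembled explicitly from $\Sigma$ and a bounded number of Givens rotations, the single remaining rotation angle being pinned down by the intermediate value theorem'' is a description of a hoped-for proof, not a proof: you do not say which rotations, in which planes, why the resulting matrix has singular values exactly $\mathbf s$ rather than merely the right diagonal, or why one IVT parameter suffices for arbitrary $N$. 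For $N=2$ this is indeed Sing's computation, but the passage from $N=2$ to general $N$ is the multi-page core of Thompson's argument and is missing. (Compare the rank-one case in \autoref{lem:rk1} of the paper, where even that single IVT argument requires care.) So the proposal is a correct reduction scheme wrapped around two unproven kernels: the complex second inequality and the strict-inequality construction.
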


\begin{remark}
  \label{rem:orthogonal-thompson}
  Thompson's theorem may be viewed in two ways: as a characterization of diagonals of operators with specified singular value sequence, or as a characterization of diagonals of the operators $U(\diag \mathbf{s})V$ as $U,V$ range over all unitary operators.
  The reader may notice that this is due to the fact, arising from the singular value decomposition, that operators of the form $UAV$ with $U,V$ unitary are precisely those that preserve the singular value sequence of $A$.
  That is, any operator which shares the singular values of $A$ and dimensionality of kernel and range can be expressed as a triple product in this way.
  The additional fact that if the desired diagonal $\mathbf{d}$ is real-valued then the matrix $A$ may be chosen to lie in $M_N(\mathbb{R})$ amounts to the equivalent statement that $U,V$ from $U(\diag \mathbf{s})V$ may be chosen to have real entries, which is a consequence of the singular value decomposition over $M_N(\mathbb{R})$.
  The Schmidt decomposition is an analogue of the singular value decomposition for compact operators.
\end{remark}

In view of the interest in normal operators, a natural question is whether or not the $N \times N$ matrix $A$ in Thompson's theorem can be chosen to be normal.
In general, this is false even for $2 \times 2$ matrices with distinct singular values.
Indeed, for a $2 \times 2$ normal matrix $A$, the singular values are simply the absolute values of the eigenvalues ($s_i = \abs{\lambda_i}$), but $(0,0)$ is a diagonal of $A$ if and only if $0 = \trace(A) = \lambda_1 + \lambda_2$, which means $s_1 = s_2$, but the zero sequence always satisfies Thompson's inequalities.
There is a host of open questions in this subject which are natural to explore.
In \autoref{sec:open-questions} we provide a partial list.

Of course, it is natural to ask how Thompson's theorem can be extended to infinite dimensions.
At first glance it may seem like a hopeless endeavor because the diagonals and singular values have no final, or necessarily even smallest, element.
However, for this reason we were led to consider in \autoref{sec:compact-thompson} compact operators where diagonal sequences and singular value sequences can always be placed in nonincreasing order converging to zero.
Intuitively, the occurrence of $d_N$ and $s_N$ in the final inequality of Thompson's theorem might be replaceable with zero, thus making it a redundant condition.
We prove exactly this in \autoref{thm:compact-thompson}.

\begin{restatable*}[Thompson's theorem for compact operators]{theorem}{thompson}
  \label{thm:compact-thompson}
  If $\mathbf s=\pren{s_{i}}_{i=1}^{\infty}$ is a nonnegative nonincreasing sequence and $\mathbf d = \pren{d_{i}}_{i=1}^{\infty}$ is a complex-valued sequence, both tending to zero, then there is a compact operator $A$ with singular value sequence $\mathbf s$ and diagonal $\mathbf d$ if and only if
  \begin{equation*}
    \sum_{i=1}^k \abs{d_i} \leq \sum_{i=1}^k s_i \quad\text{for } k \in \mathbb{N}.
  \end{equation*}
  Moreover, if $\mathbf{d}$ is real-valued then the statement holds over real Hilbert space.
\end{restatable*}

Our aforementioned \autoref{thm:unitary-thompson-sufficiency} characterization of diagonals of unitary operators includes a nontrivial condition which may be formally realized as a version of the final inequality in Thompson's theorem (see discussion immediately preceding \autoref{thm:unitary-thompson-necessity}).

\section{Background and notation}

\begin{notation}
  \label{not:background-notation}
  Let $c_0$ denote the set of (complex-valued) countably infinite sequences which converge to zero and $c_0^+$ those with nonnegative values.
  Within $c_0^+$ let $c_0^{*}$ denote those sequences which are nonincreasing.
  When sequences are denoted with a single letter they will be boldface and upright, either greek or roman letters.
  Otherwise sequences are listed between parentheses as $\mathbf{d} = (d_1,d_2,\ldots)$ or $\mathbf{d} = (d_1,\ldots,d_N)$, or more succinctly $\mathbf{d} = (d_i)_{i=1}^N$ where $N$ can be either finite or infinite.
  Let $\abs{\mathbf{d}} := \pren{\abs{d_i}}_{i=1}^{N}$.
  For a nonnegative sequence $\mathbf{d}$ (either finite or converging to zero), let $\mathbf{d}^{*}$ denote the nonincreasing rearrangement of $\mathbf{d}$ defined by: the $i$\textsuperscript{th} term of $\mathbf{d}^{*}$, $d^{*}_i$, is the $i$\textsuperscript{th} largest term of $\mathbf{d}$, respecting multiplicity.
  So when $\mathbf{d} \in c_0^+$ has infinite support, $\mathbf{d}^{*} > 0$ (i.e., $d_i > 0$ for all $i$) even if $\mathbf{d}$ is not.
  Although commonly used for this, the label ``nonincreasing rearrangement'' can be misleading.
  It is precise when $\mathbf{d}$ has finite support or when $\mathbf{d}$ is strictly positive, but when $\mathbf{d}$ has infinite support and any zeros there is no bijection $\pi$ of $\mathbb{N}$ for which $(d_{\pi(i)})_{i=1}^{\infty}$ is nonincreasing.

  We will often consider the direct sum $\mathbf{d_1} \oplus \mathbf{d_2}$ of two sequences $\mathbf{d_1}$ and $\mathbf{d_2}$, by which we mean any sequence which contains the elements of both $\mathbf{d_1}$ and $\mathbf{d_2}$ repeated according to multiplicity.
  The sequences may be either finite or infinite and order in $\mathbf{d_1} \oplus \mathbf{d_2}$ is irrelevant.
  The order of the direct sum sequence is not significant here because the class of diagonals of an operator is invariant under permutations.

  The inner product on a Hilbert space $\Hil$ is denoted by $\angles{\cdot,\cdot}$.
  For vectors $v,w \in \Hil$, let $v \otimes w$ denote the rank-one operator $x \mapsto \angles{x,w}v$.
  Operators in $B(\Hil)$ will be denoted with uppercase roman letters, but we will sometimes also use this typeface for special constants such as the underlying dimension or the length of a sequence.

  If $\mathcal{A}$ is the set of diagonal operators with respect to a fixed orthonormal basis basis $\mathfrak{e}$, let $\diag : \ell^{\infty} \to \mathcal{A}$ denote the canonical $*$-isomorphism given by
  \begin{equation*}
    \diag \mathbf{d} := \sum_{e \in \mathfrak{e}} d_i (e \otimes e).
  \end{equation*}
  When the basis is not explicitly specified it should be easily deduced from context.
\end{notation}

To avoid ambiguity, below is an explicit definition of singular values.

\begin{definition}
  Let $A$ be a compact operator on a Hilbert space $\Hil$.
  The \textit{singular values} of $A$ are the square roots of the eigenvalues of $A^{*}A$ which forms a sequence in $c_0^+$.
  Let $s_{i}(A)$ denote the $i$\textsuperscript{th} largest singular value of $A$ counting multiplicity.
  Let $s(A)$ denote the \emph{singular value sequence} $(s_{1}(A),s_{2}(A),\ldots)$.
\end{definition}

\begin{remark}
  Note that if $A$ has infinitely many positive singular values with or without a nontrivial kernel, then $\pren{s_{i}(A)}_{i=1}^{\infty}$ is a strictly positive sequence.
  That is, when $A$ is infinite rank the sequence $s(A)$ includes only the positive singular values of $A$.
\end{remark}

Similarly, to prevent confusion regarding the term \emph{compression} we provide a definition.

\begin{definition}
  Given an operator $A$ acting on $\Hil$ and a subspace $\Kil$, the \emph{compression of $A$ to $\Kil$} is the operator $PAP^{*} \in B(\Kil)$ where $P$ is the projection $P : \Hil \to \Kil$.
  Note that here $P^{*}$ is the adjoint as an operator between different Hilbert spaces, and in this case is equal to the inclusion map $P^{*} : \Kil \hookrightarrow \Hil$.
\end{definition}

We now provide definitions for the various notions of majorization we will use herein.

\begin{definition}
  \label{def:majorization}
  Given nonnegative nonincreasing sequences $\mathbf{d} = \pren{d_i}_{i=1}^N$ and $\mathbf{s} = \pren{s_i}_{i=1}^N$ for $N \in \mathbb{N} \cup \{\infty\}$, we say that $\mathbf{d}$ is \emph{weakly majorized} by $\mathbf{s}$, denoted $\mathbf{d} \prec_w \mathbf{s}$, if
  \begin{equation*}
    \sum_{i=1}^k d_i \le \sum_{i=1}^k s_i
    \quad\text{for}\ 1 \le k \le N.
  \end{equation*}
  When there is equality for $k=N$, we say that $\mathbf{d}$ is \emph{majorized} by $\mathbf{s}$, denoted $\mathbf{d} \prec \mathbf{s}$.
  If $N < \infty$ and $\mathbf{d} \prec_w \mathbf{s}$ and in addition
  \begin{equation*}
    \sum_{i=1}^{N-1} d_i - d_N \le \sum_{i=1}^{N-1} s_i - s_N,
  \end{equation*}
  then we say $\mathbf{d}$ is \emph{Thompson majorized} by $\mathbf{s}$, denoted $\mathbf{d} \prec_T \mathbf{s}$.
\end{definition}

We repeatedly use the following generalization of the Schur--Horn theorem to positive compact operators in infinite dimensions due to Kaftal and Weiss \cite[Proposition~6.6]{KW-2010-JFA}.

\begin{theorem}[Schur--Horn theorem for positive compact operators]
  \label{thm:compact-schur-horn}
  Given sequences $\mathbf{d}, \mathbf{s} \in c_0^{*}$, there is a positive compact operator with eigenvalue sequence $\mathbf{s}$ and diagonal $\mathbf{d}$ if and only if $\mathbf{d} \prec \mathbf{s}$.
  Moreover, the operator may be chosen to have real-valued entries in the basis in which it has diagonal $\mathbf{d}$.
\end{theorem}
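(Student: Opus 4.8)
The plan is to obtain necessity from the variational description of partial sums of the eigenvalues of a positive compact operator, and sufficiency from an inductive construction that installs the entries $d_1, d_2, \ldots$ onto the diagonal of $\diag \mathbf s$ one at a time, conjugating successively by finite-dimensional real orthogonal rotations and then passing to a limit.

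\textbf{Necessity.} Suppose $A \ge 0$ is compact with eigenvalue sequence $\mathbf s$ and diagonal $\mathbf d$ in the orthonormal basis $\mathfrak e = \{e_j\}$; since $\mathbf d \in c_0^{*}$, the sequence $(\angles{A e_j, e_j})_j$ is already nonincreasing. Because $A \ge 0$, the quantity $\trace A \in [0,\infty]$ does not depend on the orthonormal basis, so $\sum_i d_i = \trace A = \sum_i s_i$, which is precisely the equality ``at $k = N = \infty$'' demanded by \autoref{def:majorization}. For the inequalities, let $P_k$ be the projection onto $\lspan\{e_1, \ldots, e_k\}$; then $\sum_{i=1}^k d_i = \trace(P_k A P_k)$, while the Ky Fan maximal principle for the positive compact operator $A$ (immediate from the spectral theorem) gives $\trace(Q A Q) \le \sum_{i=1}^k s_i$ for every rank-$k$ projection $Q$. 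Hence $\mathbf d \prec_w \mathbf s$, and together with the trace equality this is $\mathbf d \prec \mathbf s$.

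\textbf{Sufficiency.} Assume $\mathbf d \prec \mathbf s$ in $c_0^{*}$. If both sequences have finite support then, padding them with zeros to a common finite length, the assertion is the finite-dimensional Schur--Horn theorem (\autoref{thm:schur-horn}), which already furnishes a matrix over $\mathbb R$. In all remaining cases I would build $A$ as a limit $A = \lim_n A_n$, where $A_0 = \diag \mathbf s$ and $A_n$ is obtained from $A_{n-1}$ by conjugating with a real orthogonal operator $R_n$ supported on the finitely many coordinates $e_n, e_{n+1}, \ldots, e_{M_n}$, chosen so that the $(n,n)$ entry of $A_n$ equals $d_n$; after stage $n$ the $n$-th row and column are frozen. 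Each $R_n$ is a unitary conjugation, so every $A_n$ retains eigenvalue sequence $\mathbf s$; and since coordinate $i$ is untouched after stage $i$, the $A_n$ converge entrywise to an operator $A$ with diagonal $\mathbf d$. The existence of a suitable $R_n$ is governed by the inductive invariant that the tail $(d_n, d_{n+1}, \ldots)$ is majorized by the eigenvalue sequence of the compression $B_{n-1}$ of $A_{n-1}$ to $\overline{\lspan}\{e_n, e_{n+1}, \ldots\}$, the base case $n=1$ being the hypothesis $\mathbf d \prec \mathbf s$. For the inductive step one applies \autoref{thm:schur-horn} to a sufficiently large principal corner of $B_{n-1}$ to obtain a real rotation $R_n$ realizing the one diagonal value $d_n$, and then uses infinite-dimensional Cauchy interlacing between the eigenvalues of $B_n$ and those of the rotated $B_{n-1}$, together with the identity $\trace(\text{rotated } B_{n-1}) = d_n + \trace B_n$, to check that the invariant descends to $n$; the freedom in $R_n$ beyond fixing the entry $d_n$ is used to keep the interlacing favorable.

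\textbf{The main obstacle.} Two points require real work. First, one must isolate the finite-dimensional lemma asserting that the realizing matrix---hence $R_n$---can be chosen so as to preserve the tail-majorization invariant against the eigenvalues of the principal compression; this is a quantitative sharpening of Horn's $T$-transform construction. Second, and more delicate, one must ensure that the construction converges to a \emph{compact} operator $A$ whose eigenvalue sequence is \emph{exactly} $\mathbf s$---a priori an entrywise limit only guarantees $s(A) \prec \mathbf s$---and whose diagonal is exactly $\mathbf d$. The remedy is to let the rotation supports $M_n$ grow rapidly enough that $\sum_n \norm{A_n - A_{n-1}}$ converges: because $A_{n-1}$ already agrees with the eventual $A$ on its frozen corner and the compressions $B_{n-1}$ retain only a vanishing portion of the mass of $\mathbf s$, one can keep $\norm{R_n - \1}$ small, so that $A_n \to A$ in operator norm; then singular values pass to the limit and $s_k(A) = \lim_n s_k(A_n) = s_k$ for every $k$, giving $s(A) = \mathbf s$. (In the non-trace-class case this control may have to be organized by first regrouping the coordinates into finite blocks carrying controlled masses, so that $A$ becomes visibly compact.) Finally, every $R_n$ is real orthogonal and $A_0 = \diag \mathbf s$ is real, so $A$ has real entries in $\mathfrak e$, which yields the statement over real Hilbert space.
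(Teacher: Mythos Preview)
The paper does not supply its own proof of this theorem; it is quoted as a background result due to Kaftal and Weiss \cite[Proposition~6.6]{KW-2010-JFA}, so there is no ``paper's proof'' to compare against directly. Your outline is in fact close in spirit to the Kaftal--Weiss argument: necessity via Ky Fan, sufficiency by an infinite succession of $T$-transforms that freeze one diagonal entry at a time.

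That said, your sufficiency argument has a genuine gap at exactly the point you flag as ``the main obstacle,'' and the remedy you propose does not close it. You assert that by taking the rotation supports $M_n$ large one can force $\norm{R_n - I}$ to be small, because ``the compressions $B_{n-1}$ retain only a vanishing portion of the mass of $\mathbf s$.'' Neither clause is justified. The angle of the $T$-transform needed to install $d_n$ is dictated by where $d_n$ sits relative to the two eigenvalues of $B_{n-1}$ being averaged, not by the ambient support size $M_n$; enlarging $M_n$ does not shrink that angle. And when $\mathbf s \notin \ell^1$ the tail mass does not vanish at all. So you have not established norm convergence of the $A_n$, and without it the entrywise limit $A$ need only satisfy $s(A) \prec_w \mathbf s$, which is strictly weaker than $s(A) = \mathbf s$.

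The published proofs avoid this trap not by making the individual rotations small but by controlling the \emph{indices} on which successive $T$-transforms act, so that the infinite product $\cdots R_2 R_1$ converges strongly to a genuine unitary $W$ (each column is eventually fixed), whence $A = W(\diag \mathbf s)W^{*}$ manifestly has eigenvalue sequence $\mathbf s$. The combinatorial content is a careful choice, at each stage, of which eigenvalue of $B_{n-1}$ to pair with $d_n$ so that (i) the tail-majorization invariant is preserved and (ii) each coordinate is touched only finitely often. Your sketch gestures at (i) but does not address (ii), and it is (ii)---not smallness of $\norm{R_n - I}$---that delivers the eigenvalue conclusion. If you rework the argument along those lines (or, alternatively, reduce to a block-diagonal situation by partitioning the index set into finite pieces on which finite Schur--Horn applies), the proof goes through and yields real entries as you note.
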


The restriction of \autoref{thm:compact-schur-horn} to finite rank positive operators has been proven by several groups including, but almost certainly not limited to, Arveson and Kadison \cite[Proposition~3.1 and Theorem~4.1]{AK-2006-OTOAaA} and Kaftal and Weiss \cite[Lemma~6.3 and Proposition~6.4]{KW-2010-JFA} and it follows as an easy corollary of Kadison's carpenter's theorem for rank-one projections \cite[Proposition~1]{Kad-2002-PNASU} in conjunction with the classical Schur--Horn theorem (\autoref{thm:schur-horn}).
However, to our knowledge, only Kaftal and Weiss address the possibility that the operator has real-valued entries.

\section{Thompson's theorem for compact operators}
\label{sec:compact-thompson}

Firstly we show in \autoref{cor:nonnegative-diagonals-suffice-s-numbers} that in this approach to extending Thompson's \autoref{thm:thompson} above to compact operators, we may assume without loss of generality that $\mathbf{d} \ge 0$, for which we need the following.

\begin{proposition}
  \label{prop:nonnegative-diagonals-suffice}
  A sequence $\mathbf{d}$ is a diagonal of an operator $A$ if and only if $\abs{\mathbf{d}}$ is a diagonal of $UA$ for some diagonal unitary operator $U$.
  Moreover for our choice of $U$, if $\mathbf{d}$ is real-valued and either $A$ or $UA$ has real-valued entries in that basis, then both do.
\end{proposition}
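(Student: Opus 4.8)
The plan is to build the required diagonal unitary $U$ explicitly from the arguments (phases) of the entries of $\mathbf{d}$; after that, both directions of the equivalence and the ``moreover'' clause fall out of a single one-line matrix computation, so this is essentially a bookkeeping reduction with no deep content.

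First I would suppose that $\mathbf{d}$ is a diagonal of $A$, realized by an orthonormal basis $\mathfrak{e} = \{e_j\}$ with $\angles{Ae_j,e_j} = d_j$. Define unimodular scalars $u_j := \abs{d_j}/d_j$ when $d_j \neq 0$ and $u_j := 1$ when $d_j = 0$, and put $U := \sum_j u_j\,(e_j \otimes e_j) = \diag(u_j)$ with respect to $\mathfrak{e}$; since $\abs{u_j} = 1$ for every $j$, this $U$ is unitary and diagonal in $\mathfrak{e}$. Then for each $j$,
\[
  \angles{UAe_j,e_j} = \angles{Ae_j, U^{*}e_j} = u_j\,\angles{Ae_j,e_j} = u_j d_j = \abs{d_j},
\]
so $\abs{\mathbf{d}}$ is a diagonal of $UA$, witnessed by the same basis $\mathfrak{e}$. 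For the converse I would use that $U$ depends only on $\mathbf{d}$ and on $\mathfrak{e}$, that $U^{*} = \diag(\overline{u_j})$ is again diagonal in $\mathfrak{e}$, and that $\overline{u_j}\,\abs{d_j} = d_j$ for every $j$ (trivially so when $d_j = 0$); hence running the same computation with $U^{*}$ in place of $U$ recovers $\mathbf{d}$ as the diagonal of $U^{*}(UA) = A$ in $\mathfrak{e}$. I would keep throughout the convention that ``diagonal unitary'' refers to diagonality with respect to the basis realizing the diagonal under discussion, which is the one bookkeeping point requiring care.

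For the final assertion, note that when $\mathbf{d}$ is real-valued each $u_j$ lies in $\{-1,1\}$, so $U$ has real entries in $\mathfrak{e}$ and $U^{*} = U^{-1} = U$ is real as well. Since the matrix of $UA$ in $\mathfrak{e}$ is the product of the real matrix of $U$ with the matrix of $A$, and likewise $A = U(UA)$, real-valuedness of either $A$ or $UA$ in $\mathfrak{e}$ forces real-valuedness of the other. The only obstacle worth naming here is organizational rather than mathematical: one must track which basis each operator is diagonal in and treat the coordinates with $d_j = 0$ separately. There is no genuine difficulty, which is exactly why the statement is useful as a preliminary reduction to the nonnegative case in what follows.
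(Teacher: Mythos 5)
Your proposal is correct and follows essentially the same route as the paper: the same unimodular phase factors $u_j$, the same diagonal unitary $U = \diag(u_j)$, the converse via $U^{*}$, and the observation that $u_j \in \{-1,1\}$ when $\mathbf{d}$ is real. No issues.
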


\begin{proof}
  Assume there is an operator $A$ with diagonal $\mathbf d$.
  For each $i$ define the modulus one complex number
  \begin{equation*}
    z_i :=
    \begin{cases}
      \frac{\abs{d_i}}{d_i} & \text{if $d_i \not= 0$} \\
      1                     & \text{if $d_i = 0$.}    \\
    \end{cases}
  \end{equation*}
  so then $z_id_i = |d_i|$.
  For the unitary $U := \diag \mathbf{z}$,  the operator $UA$ has diagonal $\abs{\mathbf{d}}$.

  For the converse, apply the diagonal unitary $U^{*}$ on the left of an operator with diagonal $\abs{\mathbf{d}}$.
  Moreover, if $\mathbf{d}$ is real-valued and if $A$ has real-valued entries then so does $UA$.
\end{proof}

\begin{corollary}
  \label{cor:nonnegative-diagonals-suffice-s-numbers}
  For sequences $\mathbf{d} \in c_0$ and $\mathbf{s} \in c_0^{*}$, there is a compact operator with diagonal $\mathbf{d}$ and singular value sequence $\mathbf{s}$ if and only if there is a compact operator with diagonal $\abs{\mathbf{d}}$ and singular value sequence $\mathbf{s}$.
\end{corollary}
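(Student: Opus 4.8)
The plan is to read the corollary off directly from \autoref{prop:nonnegative-diagonals-suffice}, once we record the elementary fact that left multiplication by a unitary alters neither compactness nor the singular value sequence. First I would note that for any unitary $U$ and any bounded operator $A$ we have $(UA)^{*}(UA) = A^{*}U^{*}UA = A^{*}A$, so $UA$ and $A$ share the same positive part $\abs{A}$ and hence the same singular value sequence; moreover $UA$ is compact exactly when $A$ is, since $U$ is invertible. The same holds with $U^{*}$ in place of $U$.

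Next I would dispatch the forward implication: if $A$ is a compact operator with diagonal $\mathbf{d}$ and singular value sequence $\mathbf{s}$, then \autoref{prop:nonnegative-diagonals-suffice} furnishes a diagonal unitary $U$ (built only from the phases of the entries of $\mathbf{d}$) with $UA$ having diagonal $\abs{\mathbf{d}}$, and by the observation above $UA$ is compact with singular value sequence $\mathbf{s}$. For the converse I would run the same argument in reverse: given a compact operator $B$ with diagonal $\abs{\mathbf{d}}$ and singular value sequence $\mathbf{s}$, the other half of \autoref{prop:nonnegative-diagonals-suffice} (i.e.\ left multiplication by $U^{*}$ for the same diagonal unitary $U$) yields $U^{*}B$ with diagonal $\mathbf{d}$, again compact with singular value sequence $\mathbf{s}$.

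I do not anticipate any genuine obstacle here; the only point requiring care is that the diagonal unitary supplied by \autoref{prop:nonnegative-diagonals-suffice} depends only on $\mathbf{d}$, so that one and the same $U$ serves in both directions, and that one-sided multiplication by it leaves $A^{*}A$ literally unchanged. If desired, the real-Hilbert-space refinement transfers in the same way through \autoref{prop:nonnegative-diagonals-suffice}, although it is not part of the statement as given.
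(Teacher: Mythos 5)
Your argument is correct and is essentially the paper's own proof: the paper likewise just invokes \autoref{prop:nonnegative-diagonals-suffice} and observes that $A$ and $UA$ share the same singular value sequence (since $(UA)^{*}(UA)=A^{*}A$). Your additional remarks about compactness and using the same $U$ in both directions are fine but add nothing beyond the paper's one-line justification.
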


\begin{proof}
  Apply \autoref{prop:nonnegative-diagonals-suffice} and note that $A$ and $UA$ have the same singular value sequence.
\end{proof}

The following proposition is originally due to Ky Fan \cite[Theorem 1]{Fan-1951-PNASUSA}.
Fan's result is actually significantly more general than stated below, but this is a commonly used simplification and is all that is needed for our purposes.
His proof restricted to the special case of \autoref{prop:fans-theorem} essentially amounts to using the Schmidt decomposition for compact operators and then two applications of the Cauchy--Schwarz inequality, along with straightforward inequality manipulation.

\begin{proposition}[Fan \protect{\cite{Fan-1951-PNASUSA}}]
  \label{prop:fans-theorem}
  If $\mathbf{d}$ is a diagonal of a compact operator $A$ with singular value sequence $s(A)$ then $\abs{\mathbf{d}}^{*} \prec_w s(A)$.
\end{proposition}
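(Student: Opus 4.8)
The plan is to follow the route sketched just before the statement: diagonalize $A$ via the Schmidt decomposition, estimate each resulting term with two applications of the Cauchy--Schwarz inequality together with Bessel's inequality, and then close with an elementary rearrangement bound. First note that since $A$ is compact and $\mathfrak{e} = \{e_i\}$ is an orthonormal basis, $e_i \to 0$ weakly and hence $Ae_i \to 0$ in norm, so $\mathbf{d} = \vpren{\angles{Ae_i, e_i}}_{i} \in c_0$; in particular, for each fixed $k \in \mathbb{N}$ the quantity $\sum_{i=1}^{k} \abs{d}_i^{*}$ is attained as $\sum_{i \in F} \abs{d_i}$ for some $k$-element index set $F$. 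Permuting $\mathfrak{e}$ (legitimate, since the set of diagonals is permutation invariant) we may assume $F = \{1,\dots,k\}$. Picking unimodular $\alpha_i$ with $\alpha_i \angles{Ae_i, e_i} = \abs{d_i}$ and setting $f_i := \overline{\alpha_i}\, e_i$, the vectors $f_1,\dots,f_k$ are orthonormal with $\lspan\{f_1,\dots,f_k\} = \lspan\{e_1,\dots,e_k\}$, and $\sum_{i=1}^{k}\abs{d}_i^{*} = \sum_{i=1}^{k}\angles{Ae_i, f_i}$.

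Next I would insert the Schmidt decomposition $A = \sum_{j} s_j(A)\, u_j \otimes v_j$, with $\{u_j\}$ and $\{v_j\}$ orthonormal and the series finite or norm-convergent, so that $\angles{Ae_i, f_i} = \sum_j s_j(A)\angles{e_i, v_j} \angles{u_j, f_i}$. Summing over $i = 1,\dots,k$, interchanging this finite sum with the limit of the Schmidt partial sums, and applying Cauchy--Schwarz to the inner sum over $i$ for each $j$ gives, with $P_k$ the orthogonal projection onto $\lspan\{e_1,\dots,e_k\}$,
\[
\sum_{i=1}^{k}\abs{d}_i^{*} \le \sum_{j} s_j(A)\, \vpren{\sum_{i=1}^{k}\abs{\angles{e_i, v_j}}^{2}}^{1/2}\vpren{\sum_{i=1}^{k}\abs{\angles{u_j, f_i}}^{2}}^{1/2} = \sum_{j} s_j(A)\,\norm{P_k v_j}\,\norm{P_k u_j}.
\]
Writing $a_j := \norm{P_k v_j}^{2}$ and $b_j := \norm{P_k u_j}^{2}$, both lie in $[0,1]$; Bessel's inequality for the orthonormal sets $\{v_j\}$ and $\{u_j\}$ tested against $e_1,\dots,e_k$ (resp.\ $f_1,\dots,f_k$) yields $\sum_j a_j \le k$ and $\sum_j b_j \le k$. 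By the AM--GM inequality the right-hand side above is at most $\sum_j s_j(A)\, c_j$ where $c_j := \tfrac12(a_j + b_j) \in [0,1]$ and $\sum_j c_j \le k$.

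Finally I would invoke the elementary fact that for a nonincreasing $\mathbf{s} \in c_0^{+}$ and any $(c_j)$ with $0 \le c_j \le 1$ and $\sum_j c_j \le k$, one has $\sum_j s_j c_j \le \sum_{i=1}^{k} s_i$: indeed
\[
\sum_{j} s_j c_j - \sum_{i=1}^{k} s_i = \sum_{j \le k} s_j(c_j - 1) + \sum_{j > k} s_j c_j \le s_k\vpren{\sum_{j} c_j - k} \le 0,
\]
using $s_j \ge s_k$ where $c_j - 1 \le 0$ and $s_j \le s_k$ where $c_j \ge 0$. Applying this with $s_j = s_j(A)$ gives $\sum_{i=1}^{k}\abs{d}_i^{*} \le \sum_{i=1}^{k} s_i(A)$, and since $k$ is arbitrary, $\abs{\mathbf{d}}^{*} \prec_w s(A)$. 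I do not expect a genuine obstacle here; the steps are all standard, and the only points requiring mild care are the observation that $\mathbf{d} \in c_0$ (so the top-$k$ partial sum is actually attained) and the interchange of the finite sum over $i$ with the limit defining the possibly infinite Schmidt series, both of which are routine for compact operators.
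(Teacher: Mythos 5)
Your argument is correct, and it is precisely the route the paper itself indicates for this result: the paper does not reprove Fan's proposition but cites \cite{Fan-1951-PNASUSA} and notes that the proof ``essentially amounts to using the Schmidt decomposition for compact operators and then two applications of the Cauchy--Schwarz inequality, along with straightforward inequality manipulation,'' which is exactly what you carry out (with AM--GM standing in for the second Cauchy--Schwarz and the closing rearrangement bound supplying the inequality manipulation). The two points you flag as needing care --- attainment of the top-$k$ partial sum because $\mathbf{d} \in c_0$, and the interchange justified by the absolute bound $\sum_j s_j \norm{P_k v_j}\,\norm{P_k u_j} \le s_1 k$ --- are handled correctly, so there is nothing to add.
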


To extend Thompson's Theorem for finite matrices to infinite matrices, it is natural to consider the rank-one case.
\autoref{lem:rk1} is subsumed by \autoref{thm:compact-thompson-sufficiency}, but we include it because of the interesting proof technique and angle observation.

\begin{lemma}
  \label{lem:rk1}
  Let $\mathbf s = (s_1,0,0,\ldots), \mathbf d = (d_{1},d_{2},\ldots) \in c_0^{*}$ with $s_1 > 0$.
  There is a rank-one operator $A$ with singular value sequence $\mathbf s$ and diagonal $\mathbf d$ if and only if
  \begin{equation*}
    \sum_{i=1}^{\infty} d_i \leq s_{1}.
  \end{equation*}
\end{lemma}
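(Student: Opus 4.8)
The plan is to prove both directions concretely by building an explicit rank-one operator. A rank-one operator can be written as $A = v \otimes w$ with $\norm{v} = \norm{w} = 1$ after scaling, in which case $s_1(A) = \norm{v}\norm{w}$; so given the constraint $s_1 > 0$ we want $A = s_1\,(v \otimes w)$ for unit vectors $v,w$, and the diagonal entries are $d_i = \angles{Ae_i,e_i} = s_1\,\overline{\angles{w,e_i}}\,\angles{v,e_i}$. Writing $v = (a_i)_i$ and $w = (b_i)_i$ in the fixed orthonormal basis, this reads $d_i = s_1\, a_i\,\overline{b_i}$. The necessity direction is immediate from \autoref{prop:fans-theorem}: if such an $A$ exists then $\abs{\mathbf d}^{*} = \mathbf d$ (since $\mathbf d \in c_0^{*}$) is weakly majorized by $s(A) = (s_1,0,0,\ldots)$, and the first weak-majorization inequality $\sum_{i=1}^k d_i \le s_1$ for all $k$ gives exactly $\sum_{i=1}^\infty d_i \le s_1$ in the limit.

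For sufficiency, assume $\sum_{i=1}^\infty d_i \le s_1$ with $\mathbf d \in c_0^{*}$ nonnegative. The natural first guess is $a_i = b_i = \sqrt{d_i/s_1}$, which gives $d_i = s_1 a_i b_i$ as desired and $\sum a_i^2 = \sum b_i^2 = \tfrac{1}{s_1}\sum d_i \le 1$; but this forces $\norm{v} = \norm{w} = \sqrt{\tfrac1{s_1}\sum d_i}$, so $\norm{v}\norm{w} = \tfrac1{s_1}\sum d_i$, and $s_1(A) = s_1\norm{v}\norm{w} = \sum d_i$, which is too small unless equality holds. The fix is to absorb the slack $s_1 - \sum_i d_i \ge 0$ into a single extra coordinate: pick an index, say rescale so that $\norm{v} = \norm{w} = 1$ by enlarging the vectors. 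Concretely, set $v = w$ impossible if we want $d_i$ real positive and also $\norm{v} = 1$; instead take $v$ and $w$ to agree on the ``diagonal-producing'' coordinates but to have an additional orthogonal component that makes them unit vectors without contributing to the diagonal. That is: on the original basis $\{e_i\}$ put $a_i = b_i = \sqrt{d_i/s_1}$; then $\norm{v}^2 = \norm{w}^2 = t := \tfrac1{s_1}\sum d_i \le 1$. If $t = 1$ we are done. If $t < 1$, we cannot add a component along some $e_j$ without changing $d_j$, so instead rotate: replace the pair $(v,w)$ by $(v', w')$ where $v' = v + \sqrt{1-t}\,f$ and $w' = w + \sqrt{1-t}\,g$ for unit vectors $f,g$ orthogonal to $\overline{\spans}\{e_i\}$ — but there is no such room in a separable space already spanned by $\{e_i\}$. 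The correct device is to use \emph{two} of the basis vectors cleverly, or better, to first handle finite truncations: for each $N$ solve the $N$-dimensional Thompson problem (\autoref{thm:thompson}) for $(s_1,0,\ldots,0)$ and $(d_1,\ldots,d_N)$ — the finite Thompson inequalities here are exactly $\sum_{i=1}^k d_i \le s_1$ and the last-term inequality $\sum_{i=1}^{N-1}d_i - d_N \le s_1 - 0$, both implied by our hypothesis — giving rank-one $N\times N$ matrices $A_N$, then pass to a limit. I expect the cleanest route, and the one matching the ``angle observation'' hinted in the lemma's preamble, is the direct geometric one: choose the angle $\theta_i$ between the coordinate directions so that $v$ and $w$ are genuinely distinct unit vectors with $\langle v, e_i\rangle \overline{\langle w, e_i\rangle}$ prescribed; specifically write $a_i = r_i \cos\alpha$, $b_i = r_i/\cos\alpha$ is wrong dimensionally — rather, spread the deficiency across all coordinates by taking $a_i = \sqrt{d_i/s_1}\cdot\sec\phi$ capped appropriately.

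The honest summary of the main obstacle: the naive rank-one Ansatz $d_i = s_1 a_i b_i$ with $v = w$ only realizes the \emph{boundary} case $\sum d_i = s_1$, and the work is in realizing the strict inequality $\sum d_i < s_1$, i.e., producing a rank-one operator whose top singular value strictly exceeds the diagonal sum. The key insight (the ``angle observation'') is that one should take $v \ne w$: if $v$ and $w$ are unit vectors making angle $\gamma$ with $\cos\gamma = \angles{v,w}$ chosen appropriately, one gets extra norm for free. Precisely, I would set $a_i = b_i =: c_i \ge 0$ on the diagonal-relevant coordinates with $\sum c_i^2 = t \le 1$, and then note that the operator $v \otimes w$ with $v = \sqrt{s_1}(c_i)_i$ and $w = \sqrt{s_1}(c_i)_i$ has $\norm{v} = \norm{w} = \sqrt{s_1 t}$, hence norm $s_1 t = \sum d_i \le s_1$ — to boost this to $s_1$, reflect one coordinate: pick the first coordinate, replace $c_1$ in $w$ only by $-c_1$ and simultaneously enlarge... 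This does not work either; the genuinely correct construction is to take $v = (\sqrt{d_i/s_1})_i \oplus (\sqrt{1-t})$ and $w = (\sqrt{d_i/s_1})_i \oplus 0$ after \emph{adjoining one new basis vector} $e_0$, then observe $\angles{(v\otimes w)e_i, e_i} = d_i$ for $i \ge 1$ and $\angles{(v\otimes w)e_0,e_0} = 0$, with $\norm{v} = 1$, $\norm{w} = \sqrt t$, so $s_1(s_1 \cdot v\otimes w)$ — still $s_1 t$. After this chain of attempts the resolution is clear: one needs $\norm{v}\norm{w} = 1$, and since $d_i = s_1 a_i\overline{b_i}$, Cauchy--Schwarz gives $\sum d_i = s_1\sum a_i \overline{b_i} \le s_1\norm{v}\norm{w}\cdot(\text{correlation} \le 1)$, and the slack in Cauchy--Schwarz is precisely what lets $\sum d_i < s_1 = s_1\norm v\norm w$: choose $v,w$ unit, \emph{not} parallel, with $a_i\overline{b_i} = d_i/s_1$. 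This is a system one solves by taking, e.g., $a_i = \lambda \sqrt{d_i/s_1}$ and $b_i = \lambda^{-1}\sqrt{d_i/s_1}$ for $i\ge 1$ and using $e_0$-components to normalize: $\norm v^2 = \lambda^2 t + p^2 = 1$, $\norm w^2 = \lambda^{-2} t + q^2 = 1$, solvable with $p,q \ge 0$ for any $\lambda$ with $\lambda^2 t \le 1$ and $\lambda^{-2}t \le 1$, i.e. $\sqrt t \le \lambda \le 1/\sqrt t$, which is a nonempty interval since $t \le 1$; take $\lambda = 1$, so $p^2 = q^2 = 1 - t \ge 0$, $p = q = \sqrt{1-t}$, and then $A := s_1 (v \otimes w)$ has $\norm v = \norm w = 1$, hence $s(A) = (s_1,0,0,\ldots)$, and diagonal $d_i$ for $i \ge 1$ and $0$ at $e_0$ — and we relabel $e_0$ back into the sequence since a single extra $0$ in a nonincreasing-to-zero diagonal is harmless (it is already present among the trailing zeros if $\mathbf d$ has finite support, and otherwise we are realizing $\mathbf d$ with an extra zero which, up to permutation, we must show is fine — but actually here $\mathbf d\in c_0^*$ may be strictly positive, so we instead avoid $e_0$ by the $\lambda\ne 1$ freedom is insufficient). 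The truly clean fix, which I would present, is: since $\mathbf d$ is summable with $\sum d_i \le s_1$, the deficiency $s_1 - \sum d_i$ is split off as an \emph{additional singular value contribution is impossible} — so one works in the Hilbert space as given, sets $a_i = b_i = \sqrt{d_i / \sum_j d_j}$ so that $v = w$ is a genuine unit vector, forming $B := (\textstyle\sum_j d_j)\,(v\otimes v)$ which is positive rank-one with diagonal $\mathbf d$ and $s_1(B) = \sum_j d_j$; then invoke \autoref{thm:thompson} in finite dimensions or a direct perturbation to increase the top singular value from $\sum d_j$ up to $s_1$ while keeping the diagonal — concretely, note $(s_1, 0, \dots, 0)$ majorizes-in-the-Thompson-sense any such truncation, apply Thompson finitely and take a strong limit of the resulting rank-one matrices, whose boundedness by $s_1$ and fixed rank forces the limit to be rank-one compact with the right singular value and diagonal. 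I would write the limiting argument carefully as it is where compactness enters; everything else is the finite Thompson theorem plus bookkeeping.
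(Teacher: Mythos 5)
Your necessity direction (Fan's theorem) is correct and is exactly what the paper does. The problem is the sufficiency direction: after a long chain of discarded attempts, the construction you actually commit to --- apply the finite Thompson theorem (\autoref{thm:thompson}) to the truncations $(d_1,\ldots,d_N)$ versus $(s_1,0,\ldots,0)$ and ``take a strong limit'' of the resulting rank-one matrices $A_N$ --- has a genuine gap at the limiting step. Thompson's theorem is a bare existence statement, so you have no control over which $A_N$ you get; the sequence need not converge in SOT at all, and after passing to a WOT-convergent subsequence (writing $A_N = v_N\otimes w_N$ with $\norm{v_N}=\norm{w_N}=\sqrt{s_1}$ and extracting weak limits $v_N\rightharpoonup v$, $w_N\rightharpoonup w$) the limit is $v\otimes w$ with $\norm{v}\norm{w}\le s_1$ only: weak limits lose norm. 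Your claim that ``boundedness by $s_1$ and fixed rank forces the limit to have the right singular value'' is false --- for $\mathbf d=\mathbf 0$ the choice $A_N = s_1\,(e_N\otimes e_{N+1})$ satisfies all the finite constraints yet converges to $0$. Fan's theorem rescues the top singular value of the limit only in the extremal case $\sum d_i = s_1$; when the inequality is strict, which is precisely the case you identify as the whole difficulty, nothing in your argument prevents the limit from having norm strictly less than $s_1$.

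The frustrating part is that you reach the correct construction and then discard it: the scaling freedom $a_1=\lambda\sqrt{d_1/s_1}$, $b_1=\lambda^{-1}\sqrt{d_1/s_1}$ (no adjoined vector $e_0$, no normalization to unit vectors) is exactly the paper's proof. With $v_\alpha = \alpha\sqrt{d_1}\,e_1+\sum_{i\ge 2}\sqrt{d_i}\,e_i$ and $w_\alpha = \alpha^{-1}\sqrt{d_1}\,e_1+\sum_{i\ge 2}\sqrt{d_i}\,e_i$, the diagonal of $v_\alpha\otimes w_\alpha$ is $\mathbf d$ for \emph{every} $\alpha$, while $\norm{v_\alpha}^2\norm{w_\alpha}^2 = d_1^2+(\alpha^2+\alpha^{-2})\,d_1\sum_{i\ge2}d_i+(\sum_{i\ge2}d_i)^2$ equals $(\sum_i d_i)^2\le s_1^2$ at $\alpha=1$ and tends to $\infty$ as $\alpha\to\infty$ provided $d_1>0$ and $d_2>0$; the intermediate value theorem then produces $\beta$ with $\norm{v_\beta}\norm{w_\beta}=s_1$. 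So the $\lambda\ne 1$ freedom is not ``insufficient'' --- it can only \emph{increase} the product of norms above its minimum $\sum_i d_i$, which is exactly the direction needed. The degenerate cases ($\mathbf d=\mathbf 0$, or $d_1>d_2=0$) are then handled separately by placing the surplus mass on coordinates where $d_i=0$, e.g.\ $v=\sqrt{d_1}e_1+\sqrt{1-d_1}e_2$, $w=\sqrt{d_1}e_1+\sqrt{1-d_1}e_3$, which sidesteps the spurious-extra-zero problem you correctly worried about. To repair your write-up, replace the truncation-and-limit step with this explicit one-parameter family and the case split on whether $d_2>0$.
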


\begin{proof}
  Suppose $\sum_{i=1}^{\infty} d_i \leq s_{1}$.
  We may assume $s_1=1$, since the general case follows by scaling.

  Let $\{e_{i}\}_{i=1}^{\infty}$ be an orthonormal basis.
  For each of three cases, we will define two sequences of nonnegative numbers $\pren{a_{i}}_{i=1}^{\infty}$ and $\pren{b_{i}}_{i=1}^{\infty}$, the corresponding vectors
  \begin{equation*}
    v = \sum_{i=1}^{\infty}a_{i}e_{i}\quad\text{and}\quad w=\sum_{i=1}^{\infty}b_{i}e_{i},
  \end{equation*}
  and the operator $A = v \otimes w$, i.e., $Af = \langle f,w\rangle v$.
  A simple calculation shows that the singular value sequence of $A$ is $(\norm{v}\norm{w},0,0,\ldots)$, and the diagonal of $A$ is $\pren{a_{i}b_{i}}_{i=1}^{\infty}$.

  \emph{Case 1: $\mathbf d=0$.}
  Set $v=e_{1}$ and $w=e_{2}$.
  In this case we have $\norm{v}=\norm{w}=1$ and $a_{i}b_{i}=0=d_{i}$ for all $i$.

  \emph{Case 2: $d_{1}>d_{2}=0$.}
  Set $v=\sqrt{d_{1}}e_{1}+\sqrt{1-d_{1}}e_{2}$ and $w=\sqrt{d_{1}}e_{1}+\sqrt{1-d_{1}}e_{3}$, so then $\norm{v}=\norm{w}=1$ and $a_{1}b_{1}=d_{1}$ and $a_{i}b_{i}=0=d_{i}$ for all $i\geq 2$.

  \emph{Case 3: $d_{2}>0$.}
  For each $\alpha>0$ set
  \begin{equation*}
    v_{\alpha} = \alpha\sqrt{d_{1}}e_{1} + \sum_{i=2}^{\infty}\sqrt{d_{i}}e_{i}\quad\text{and}\quad w_{\alpha} = \frac{1}{\alpha}\sqrt{d_{1}}e_{1} + \sum_{i=2}^{\infty}\sqrt{d_{i}}e_{i}
  \end{equation*}
  For any choice of $\alpha$ we see that $a_{i}b_{i} = d_{i}$ for all $i\in\N$.
  We calculate
  \begin{equation*}
    \norm{v_{\alpha}}^{2}\norm{w_{\alpha}}^{2} = d_{1}^2 + \left(\alpha^2+\frac{1}{\alpha^2}\right)d_{1}\sum_{i=2}^{\infty}d_{i} + \left(\sum_{i=2}^{\infty}d_{i}\right)^{2}.
  \end{equation*}
  When $\alpha=1$ we have
  \begin{equation*}
    \norm{v_{1}}^{2}\norm{w_{1}}^{2} = d_{1}^2 + 2d_{1}\sum_{i=2}^{\infty}d_{i} + \left(\sum_{i=2}^{\infty}d_{i}\right)^{2} = \left(\sum_{i=1}^{\infty}d_{i}\right)^{2}\leq s_{1}^{2}.
  \end{equation*}

  It is clear that $\norm{v_{\alpha}}\norm{w_{\alpha}}$ is continuous for $\alpha\in(0,\infty)$.
  Since $d_{1},d_{2}>0$ we see that $\norm{v_{\alpha}}\norm{w_{\alpha}}\to\infty$ as $\alpha\to\infty$.
  Thus, for some $\beta>0$ we have
  \begin{equation*}
    \norm{v_{\beta}}\norm{w_{\beta}} = s_{1}.
  \end{equation*}
  Setting $A = v_{\beta} \otimes w_{\beta}$ gives the desired result.

  The converse is clear from \autoref{prop:fans-theorem}.
\end{proof}

\begin{remark}
  In this rank-one case, among all solutions $A$ the angle between $\ker^{\perp} A$ and $\ran A$ is unique and determined by $s_1 = \norm{A}$ and $\trace A$.
  In fact, if $A = v \otimes w$ is any rank-one operator and $\theta$ is the angle between $v,w$, then
  \begin{equation*}
    \cos \theta = \frac{\abs{\angles{v,w}}}{\norm{v}\cdot\norm{w}} = \frac{\abs{\trace A}}{\norm{A}}.
  \end{equation*}
  Moreover, the angle between $\ker^{\perp} A$ and $\ran A$ is precisely the angle between $v$ and $w$.
\end{remark}

A natural next step would be to prove a finite rank Thompson's theorem such as \autoref{cor:finite-rank-thompson}.
However all of our proofs of this result had substantial overlap with the proof of \autoref{thm:compact-thompson}.
As the result we simply state the finite rank version as a corollary of \autoref{thm:compact-thompson}.

\begin{corollary}[Finite rank Thompson's theorem]
  \label{cor:finite-rank-thompson}
  Let $\mathbf s$ and $\mathbf d$ be nonincreasing nonnegative sequences with $\mathbf s$ of finite support.
  There is a finite rank operator $A$ with singular values $\mathbf s$ and diagonal $\mathbf d$ if and only if
  \begin{equation}
    \label{eq:frt0}
    \sum_{i=1}^{k}d_{i}\leq \sum_{i=1}^{k}s_{i} \quad\text{for all }k\in\N.
  \end{equation}
\end{corollary}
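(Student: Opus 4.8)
The plan is to obtain \autoref{cor:finite-rank-thompson} directly from \autoref{thm:compact-thompson}, exploiting that a finite rank operator is in particular compact. Two elementary bridging observations are needed: (a) a compact operator $A$ has finite rank exactly when its singular value sequence $s(A)$ has finite support (indeed $A$ and $A^{*}A$ have the same rank, and $\operatorname{rank} A^{*}A$ is the number of positive eigenvalues of $A^{*}A$, i.e.\ the number of positive entries of $s(A)$); and (b) in both directions of the claimed equivalence the diagonal $\mathbf{d}$ automatically lies in $c_0^{*}$, so that the hypotheses of \autoref{thm:compact-thompson} are met.

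For necessity, I would take a finite rank $A$ with $s(A) = \mathbf{s}$ and diagonal $\mathbf{d}$. Since $A$ is compact and $e_j \to 0$ weakly in any orthonormal basis $\{e_j\}$, we get $\norm{Ae_j} \to 0$, hence $\abs{d_j} = \abs{\angles{Ae_j,e_j}} \le \norm{Ae_j} \to 0$; being also nonnegative and nonincreasing, $\mathbf{d} \in c_0^{*}$, while $\mathbf{s} \in c_0^{*}$ since it has finite support. Then the necessity half of \autoref{thm:compact-thompson} yields \eqref{eq:frt0} at once; alternatively, because $\abs{\mathbf{d}}^{*} = \mathbf{d}$ here, \autoref{prop:fans-theorem} gives the same conclusion.

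For sufficiency, assume \eqref{eq:frt0} and write $s_1 \ge \cdots \ge s_r > 0 = s_{r+1} = s_{r+2} = \cdots$. Then $\sum_{i=1}^{k} d_i \le \sum_{i=1}^{k} s_i \le \sum_{i=1}^{r} s_i < \infty$ for every $k$, so $\sum_i d_i$ converges and $\mathbf{d} \in c_0^{*}$; also $\mathbf{s} \in c_0^{*}$. Hence \autoref{thm:compact-thompson} furnishes a compact operator $A$ with $s(A) = \mathbf{s}$ and diagonal $\mathbf{d}$, and by observation (a) this $A$ has rank $r < \infty$, completing the argument.

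I do not expect a genuine obstacle: the entire content sits in \autoref{thm:compact-thompson}, and what remains is bookkeeping that translates ``finite support of $\mathbf{s}$'' into ``finite rank of $A$'' and the small but worth-stating point that \eqref{eq:frt0} together with $\mathbf{s}$ finitely supported already forces $\mathbf{d} \in c_0$, so the compact theorem genuinely applies in the sufficiency direction. If one additionally wanted the operator to have real entries (not asserted in \autoref{cor:finite-rank-thompson}) it would follow from the corresponding clause of \autoref{thm:compact-thompson}.
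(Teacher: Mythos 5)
Your deduction is correct and matches the paper's intent exactly: the paper gives no separate proof, explicitly stating that \autoref{cor:finite-rank-thompson} is to be read off from \autoref{thm:compact-thompson}, which is precisely what you do. The bookkeeping you supply (finite rank $\Leftrightarrow$ finite support of $s(A)$ under the paper's conventions, and that \eqref{eq:frt0} together with finitely supported $\mathbf{s}$ forces $\mathbf{d}\in c_0^{*}$ so the compact theorem applies) is accurate and fills in what the paper leaves implicit.
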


Although basic, the next lemma is a fundamental tool in the construction of diagonals but to our knowledge it has not yet appeared in the literature.
Herein we use it in the proofs of \autoref{thm:compact-thompson} case 3 and \autoref{thm:unitary-thompson-sufficiency}.

\begin{lemma}
  \label{lem:compression-diagonal}
  Suppose that $A$ is an operator acting on $\Hil$ and $\Kil$ is a subspace.
  If the compressions of $A$ to $\Kil$ and $\Kil^{\perp}$ have diagonals $\mathbf{d}_1$ and $\mathbf{d}_2$ respectively, then $A$ has diagonal $\mathbf{d}_1 \oplus \mathbf{d}_2$.
\end{lemma}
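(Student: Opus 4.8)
The statement is essentially a bookkeeping fact about how the matrix entries of $A$ interact with an orthogonal decomposition of the Hilbert space. The plan is to pick an orthonormal basis adapted to the splitting $\Hil = \Kil \oplus \Kil^{\perp}$ and compute the diagonal of $A$ directly in that basis. First I would choose an orthonormal basis $\mathfrak{e}_1 = \{f_j\}_{j}$ of $\Kil$ in which the compression $PAP^{*}$ has diagonal $\mathbf{d}_1$, where $P : \Hil \to \Kil$ is the orthogonal projection; likewise choose an orthonormal basis $\mathfrak{e}_2 = \{g_k\}_{k}$ of $\Kil^{\perp}$ in which $QAQ^{*}$ has diagonal $\mathbf{d}_2$, where $Q : \Hil \to \Kil^{\perp}$ is the orthogonal projection. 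Since $\Hil = \Kil \oplus \Kil^{\perp}$, the union $\mathfrak{e} := \mathfrak{e}_1 \cup \mathfrak{e}_2$ is an orthonormal basis of $\Hil$.

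The key computation is then to observe that for $f_j \in \mathfrak{e}_1 \subseteq \Kil$ we have $P^{*} f_j = f_j$ (here $P^{*} : \Kil \hookrightarrow \Hil$ is the inclusion, as in the definition of compression given in the excerpt), so
\begin{equation*}
  \angles{A f_j, f_j}_{\Hil} = \angles{A P^{*} f_j, P^{*} f_j}_{\Hil} = \angles{P A P^{*} f_j, f_j}_{\Kil} = (d_1)_j,
\end{equation*}
using that $P$ is the adjoint of the inclusion, i.e. $\angles{x, P^{*} y}_{\Hil} = \angles{P x, y}_{\Kil}$. The identical argument with $Q$ in place of $P$ and $g_k$ in place of $f_j$ gives $\angles{A g_k, g_k}_{\Hil} = (d_2)_k$. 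Hence in the basis $\mathfrak{e}$ the diagonal of $A$ is precisely the concatenation of $\mathbf{d}_1$ and $\mathbf{d}_2$, which by the convention in \autoref{not:background-notation} (order is irrelevant for direct sums of sequences) is $\mathbf{d}_1 \oplus \mathbf{d}_2$. Since having a prescribed sequence as a diagonal only requires the existence of \emph{some} orthonormal basis realizing it, this completes the proof.

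There is essentially no obstacle here; the only thing to be careful about is the adjoint-of-inclusion bookkeeping, i.e. keeping straight that $P^{*}$ denotes the inclusion $\Kil \hookrightarrow \Hil$ and that for vectors lying in $\Kil$ the map $P$ acts as the identity, so that compressing and then taking the inner product against a basis vector of $\Kil$ returns the same number as taking the inner product in $\Hil$. One should also note in passing that $\mathbf{d}_1$ and $\mathbf{d}_2$ here need not be nonincreasing or nonnegative — the lemma is purely about concatenating diagonals — and that it applies verbatim over real Hilbert space, a remark worth making since the surrounding results carry a real-scalar version.
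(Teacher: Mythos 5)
Your proposal is correct and follows essentially the same route as the paper: choose bases $\mathfrak{e}_1$ of $\Kil$ and $\mathfrak{e}_2$ of $\Kil^{\perp}$ realizing $\mathbf{d}_1$ and $\mathbf{d}_2$, take their union, and use the adjoint-of-inclusion identity $\angles{Ae,e}_{\Hil} = \angles{PAP^{*}e,e}_{\Kil}$ for $e$ in the relevant subspace. The computation and conclusion match the paper's proof exactly.
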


\begin{proof}
  By hypothesis, there is a basis $\mathfrak{e}_1$ for $\Kil$ with respect to which the compression of $A$ to $\Kil$ has diagonal $\mathbf{d}_1$.
  Similarly, there is a basis $\mathfrak{e}_2$ for $\Kil^{\perp}$ corresponding to $\mathbf{d}_2$.
  Let $P : \Hil \to \Kil$ and $P^{\perp} : \Hil \to \Kil^{\perp}$ be the standard projections.
  Set $\mathfrak{e} := \mathfrak{e}_1 \cup \mathfrak{e}_2$ and notice that if $e \in \mathfrak{e}_1$, then $Pe = e$, and therefore $\angles{Ae,e}_{\Hil} = \angles{AP^{*}e,P^{*}e}_{\Hil} = \angles{PAP^{*}e,e}_{\Kil}$.
  Similarly, if $e \in \mathfrak{e}_2$ then $\angles{Ae,e}_{\Hil} = \angles{A(P^{\perp})^{*}e,(P^{\perp})^{*}e}_{\Hil} = \angles{P^{\perp}A(P^{\perp})^{*}e,e}_{\Kil}$, and hence $A$ has diagonal $\mathbf{d}_1 \oplus \mathbf{d}_2$ with respect to $\mathfrak{e}$.
\end{proof}

\begin{theorem}
  \label{thm:compact-thompson-sufficiency}
  If $\mathbf{s} \in c_0^{*}$ and $\mathbf{d} \in c_0^+$ are sequences with $\mathbf{d}^{*} \prec_w \mathbf{s}$, then $\mathbf{d}$ is a diagonal of a compact operator $A$ with singular value sequence $\mathbf{s}$.
  Moreover, we may choose $A$ to have real-valued entries with diagonal $\mathbf{d}$.
\end{theorem}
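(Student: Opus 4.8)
The plan is to reduce the weak majorization $\mathbf{d}^* \prec_w \mathbf{s}$ to an honest majorization between suitably enlarged sequences, to which we can apply the Schur--Horn theorem for positive compact operators (\autoref{thm:compact-schur-horn}), and then to ``bend'' the resulting selfadjoint operator into one with the prescribed singular values. First I would dispose of the degenerate cases: if $\mathbf{d} = 0$ we can take $A$ to be a weighted unilateral-type shift on an orthonormal basis realizing the singular values $\mathbf{s}$ (indexing so that the support of $\mathbf{s}$ matches the dimensions of kernel and cokernel), which has zero diagonal; and if $\mathbf{s}$ has finite support the argument below still applies with finite-rank bookkeeping. So assume $\mathbf{d} \neq 0$ and, by splitting off the zeros of $\mathbf{d}$ via \autoref{lem:compression-diagonal} (compressing to the span of basis vectors carrying the zero diagonal entries, where we again place a shift), assume $\mathbf{d} = \mathbf{d}^* > 0$.

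The key step is the following: since $\mathbf{d}^* \prec_w \mathbf{s}$ but possibly $\sum d_i^* < \sum s_i$ (or both infinite), I want to interleave the sequences $\mathbf{s}$ and $-\mathbf{s}$ with $\mathbf{d}$ to produce a majorization. Consider the sequence $\mathbf{t}$ obtained from $\mathbf{s}$ by replacing each $s_i$ with the pair $s_i, -s_i$ (so $\mathbf{t}$ has the same singular values as $\mathbf{s}$ but is ``balanced''), and note $\sum |t_i| = 2\sum s_i$ while the partial sums of the nonincreasing rearrangement $|\mathbf{t}|^*$ dominate those of $\mathbf{s}$. The target is to find a selfadjoint compact operator whose eigenvalue list is (a rearrangement of) $\mathbf{t}$ together with enough zeros, and whose diagonal is $\mathbf{d}$ together with a complementary piece $\mathbf{d}'$; concretely I would look for $\mathbf{d}' \in c_0^+$ with $\mathbf{d} \oplus (-\mathbf{d}') \prec \{\pm s_i\}$ as signed sequences — this is exactly a two-sided (selfadjoint) Schur--Horn majorization, and it reduces to the positive Schur--Horn theorem after a standard trick of adding $+s_i$'s and $-s_i$'s in cancelling pairs. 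Having produced such a selfadjoint $B = B^*$ compact with $s(B) = \mathbf{s}$ (each $s_i$ appearing once as $+s_i$ and once as $-s_i$ accounts for the singular value $s_i$ with the right multiplicity after we also track kernel dimension) and with $\diag B \supseteq \mathbf{d}$ on a distinguished subbasis, I invoke \autoref{lem:compression-diagonal}: the compression of $B$ to the span of that subbasis has diagonal $\mathbf{d}$, so to finish I only need that this compression can be taken to be a genuine compact operator with singular value sequence $\mathbf{s}$ and diagonal $\mathbf{d}$ — which is automatic since compressions of compact operators are compact, and the singular values of $B$ are controlled.

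The real work, and the step I expect to be the main obstacle, is matching singular value multiplicities exactly rather than merely up to weak majorization: going from $\mathbf{d}^* \prec_w \mathbf{s}$ (an inequality that may be strict in every partial sum) to an operator whose singular values are \emph{precisely} $\mathbf{s}$, with the correct kernel dimension, requires carefully choosing the complementary diagonal $\mathbf{d}'$ and the ambient space so that no extra singular values are introduced and none are lost. I would handle this by an explicit ``greedy'' construction of $\mathbf{d}'$: process $k = 1, 2, \ldots$ and at each stage choose $d_k'$ as large as the majorization constraints permit, using the finite-dimensional Thompson theorem (\autoref{thm:thompson}) on finite truncations to guarantee solvability, then pass to the limit using compactness (the sequences all tend to zero, so tails are negligible). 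Real entries come for free throughout: \autoref{thm:compact-schur-horn} and \autoref{thm:thompson} both provide real-valued matrices, shifts are real, and \autoref{lem:compression-diagonal} preserves realness, so the ``moreover'' clause follows by carrying the real constructions through verbatim.
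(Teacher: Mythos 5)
There is a fatal gap at the final step of your construction. After producing a selfadjoint compact $B$ with eigenvalue list $\{\pm s_i\}$ and diagonal $\mathbf{d}\oplus(-\mathbf{d}')$, you compress $B$ to the span of the subbasis carrying $\mathbf{d}$ and assert that the resulting operator ``can be taken to be'' one with singular value sequence $\mathbf{s}$, calling this automatic. It is not: a compression to a proper subspace is not a unitary conjugation, and the singular values of $PBP^{*}$ are essentially unconstrained by those of $B$ beyond interlacing/majorization-type inequalities. (Note also that your $B$ would have each $s_i$ as a singular value with multiplicity \emph{two}, one from $+s_i$ and one from $-s_i$, so even before compressing the singular value data is wrong.) Deciding which singular value sequences a compression with prescribed diagonal can have is, after dilation, equivalent to the problem you are trying to solve, so this step is circular. \autoref{lem:compression-diagonal} only helps in the opposite direction: it assembles a diagonal of a \emph{fixed} operator $A$ from diagonals of its compressions to complementary subspaces, after intra-subspace basis changes that are unitary conjugations of $A$ and hence preserve $s(A)$. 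A secondary problem is your appeal to a two-sided (signed) Schur--Horn theorem for compact selfadjoint operators with eigenvalues $\{\pm s_i\}$; this does not reduce to the positive compact Schur--Horn theorem (\autoref{thm:compact-schur-horn}) by ``adding cancelling pairs,'' and the characterization in the two-sided compact case is genuinely more delicate than majorization.

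For comparison, the paper never compresses away singular value information. It sets $\delta_n=\sum_{j=1}^n(s_j-d_j)$ and splits into cases: when $\mathbf{d}\prec\mathbf{s}$ it applies \autoref{thm:compact-schur-horn} directly; when $\liminf\bm{\updelta}>0$ is approached from below infinitely often it partitions $\mathbb{N}$ into finite blocks, pads each block of $\mathbf{s}$ with a zero singular value and each block of $\mathbf{d}$ with a small far-out entry $d_{m_j}$ so that finite Thompson majorization holds (the extra zero makes Thompson's last inequality trivial), and takes a direct sum of the resulting finite matrices, which \emph{does} preserve singular values; in the remaining case it modifies one diagonal entry, applies finite Thompson's theorem to an initial segment, puts $\diag$ of the tail of $\mathbf{s}$ on the complement, and then uses \autoref{thm:compact-schur-horn} plus \autoref{lem:compression-diagonal} only to re-choose the basis of a subspace of the already-constructed $A$. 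If you want to salvage your outline, you would need to replace the compression step with a direct-sum or conjugation-based mechanism of this kind.
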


\begin{proof}
  We begin by reducing to the case when $\mathbf{d} \in c_0^{*}$.
  Let $\mathbf{d} \in c_0^+$ and notice that we may write $\mathbf{d} = \mathbf{d}' \oplus \mathbf{0}_m$ where $\mathbf{d}' \in c_0^{*}$ for some $m \in \mathbb{Z}_{\ge 0} \cup \{\infty\}$ (if $\mathbf{d}$ has finite support choose $m=0$, otherwise choose $m = \abs{d^{-1}(0)}$).
  Suppose that there is some compact operator $A$ with diagonal $\mathbf{d}'$ and singular value sequence $s(A) = \mathbf{s}$.
  Then certainly $\mathbf{d} = \mathbf{d}' \oplus \mathbf{0}_m$ is a diagonal of $A \oplus 0_m$ which satisfies $s(A \oplus 0_m) = s(A) = \mathbf{s}$.
  Moreover, if $A$ has real-valued entries, so does $A \oplus 0_m$.
  Therefore we may assume without loss of generality that $\mathbf{d} \in c_0^{*}$.

  As a matter of notation, throughout the remainder of the proof denote by $\bm{\updelta}$ the sequence $\pren{\delta_n}_{n=1}^{\infty}$ whose terms are given by $\delta_n = \sum_{j=1}^n (s_j-d_j)$.
  Note that $\bm{\updelta} \ge 0$ since $\mathbf{d} = \mathbf{d}^{*} \prec_w \mathbf{s}$.

  The rest of this proof has three cases, the last two of which are harder.

  \emph{Case 1: $\mathbf{d} \prec \mathbf{s}$}.
  Apply the Schur--Horn theorem for positive compact operators (\autoref{thm:compact-schur-horn}) which also guarantees $A$ can be chosen to have real-valued entries.

  Note that this case includes the situations both when $\liminf \bm{\updelta} = 0$ and when $\mathbf{d},\mathbf{s} \notin \ell^1$, so that one has majorization (not merely weak majorization) because one has equality of the infinite sums in \autoref{def:majorization}.

  \emph{Case 2: $\liminf \bm{\updelta} > 0$ and $\delta_n < \liminf \bm{\updelta}$ for infinitely many $n$.}
  Note that in this case, for each $k \in \mathbb{N}$, we have $\inf_{n>k} \delta_n < \liminf \bm{\updelta}$ and moreover this infimum is attained by finitely many indices $n > k$.

  Set $k_0 := 0$ and define $k_j$ inductively by letting $k_{j+1}$ be the largest index $m$ satisfying $\delta_m = \inf_{n>k_j} \delta_n$, in particular $\delta_{k_{j+1}} = \inf_{n>k_j} \delta_n$.
  Thus we necessarily have $\delta_{k_j} < \delta_n$ if $n > k_j$, and hence
  \begin{equation}
    \label{eq:k-j-property}
    \sum_{i=k_j+1}^n (s_i-d_i) = \delta_n - \delta_{k_j} > 0.
  \end{equation}
  Since $\mathbf{d} \in c_0^{*}$, for each $j \in \mathbb{N}$ there exist distinct $m_j \ge k_j + j$ for which
  $d_{m_j} < \min \{ \delta_n - \delta_{k_j} \mid k_j+1 \le n \le k_{j+1} \}$, and therefore for $k_j+1 \le n \le k_{j+1}$,
  \begin{equation*}
    \sum_{i=k_j+1}^n (s_i-d_i) - d_{m_j} > 0.
  \end{equation*}

  We next partition $\mathbb{N}$ inductively.
  Consider $N_1 = \{ 1,\ldots,k_1,m_1 \}$ and define disjoint $N_{j+1}$ inductively as the smallest $k_{j+1}-k_j$ elements of $\mathbb{N} \setminus \left( \bigcup_{i=1}^j N_j \right)$ along with $m_{j+1}$.
  By our choice of $m_{j+1}$ we show that $m_{j+1} \notin \bigcup_{i=1}^j N_i$ and there are at least $k_{j+1}-k_j$ smaller elements in $\mathbb{N} \setminus \left( \bigcup_{i=1}^j N_i \right)$.
  Indeed, the number of elements in this latter set which are less than $m_{j+1}$ is minimized when all the elements of $\bigcup_{i=1}^j N_i$ (of which there are $\sum_{i=1}^{j-1} (k_{i+1} - k_i + 1) = k_j + j$) are all less than $m_{j+1}$.
  Since $m_{j+1} \ge k_{j+1} + j+1$ there are at least $k_{j+1} - k_j$ elements of $\mathbb{N} \setminus \left( \bigcup_{i=1}^j N_i \right)$ which are strictly smaller than $m_{j+1}$.
  A straightforward argument by induction then establishes $\bigcup_{i=1}^j N_i \subseteq \{1,\ldots,k_j+j\} \cup \{m_1,\ldots,m_j\}$ and hence $m_{j+1} \notin \left( \bigcup_{i=1}^j N_i \right)$ since $m_{j+1} \ge k_{j+1} + j + 1$ and since we chose the $m_i$ to be distinct.

  Define
  \begin{equation*}
    \mathbf{d}^j := \pren{d_{\phi_j(1)},\ldots,d_{\phi_j(k_j-k_{j-1}+1)}}
    \quad\text{and}\quad
    \mathbf{s}^j := \pren{s_{k_{j-1}+1},\ldots,s_{k_j},0},
  \end{equation*}
  where $\phi_j : \{1,\ldots,k_j-k_{j-1}+1 \} \to N_j$ is the order preserving bijection.

  Note that $\{1,\ldots,k_j\} \subseteq \bigcup_{i=1}^{j-1} N_i$ and therefore $\{1,\ldots,k_j\} \cap N_{j+1} = \emptyset$.
  Along with the fact that $\mathbf{d}$ is nonincreasing this implies for $1 \le n \le k_j - k_{j-1}$,
  \begin{equation*}
    \sum_{i=1}^n d^j_i = \sum_{i=1}^n d_{\phi_j(i)} \le \sum_{i=1}^n d_{k_{j-1}+i} = \sum_{i=k_{j-1}+1}^{k_{j-1}+n} d_i.
  \end{equation*}
  Combining this with equation \eqref{eq:k-j-property} yields
  \begin{equation*}
    \sum_{i=1}^n (s^j_i - d^j_i) \ge \sum_{i=k_{j-1}+1}^{k_{j-1}+n} (s_i - d_i) = \delta_{k_{j-1}+n} - \delta_{k_{j-1}} > 0,
  \end{equation*}
  over these same values of $n$.
  The choice of $m_j$ guarantees
  \begin{equation*}
    \sum_{i=1}^{k_j-k_{j-1}+1} (s^j_i - d^j_i) \ge \delta_{k_j} - \delta_{k_{j-1}} - d_{m_j} > 0,
  \end{equation*}
  and hence $\mathbf{d}^j \prec_w \mathbf{s}^j$.
  Finally, since the last term of $\mathbf{s}^j$ is zero, the final inequality for Thompson majorization is trivially satisfied.

  Therefore, by Thompson's theorem (\autoref{thm:thompson}) there is a matrix $A_j \in M_{k_j-k_{j-1}+1}(\mathbb{C})$ with real-valued entries and diagonal $\mathbf{d}^j$ such that $s(A_j) = \mathbf{s}^j$.
  Finally, letting $A = \bigoplus_{j=1}^{\infty} A_j$ we find that $A$ has real-valued entries, diagonal $\mathbf{d} = \oplus_j \mathbf{d}^j$ and singular value sequence $s(A) = (\oplus_j s(A_j))^{*} = (\oplus_j \mathbf{s}^j)^{*} = \mathbf{s}$.

  \emph{Case 3: Eventually $\delta_n \ge \lim \bm{\updelta} > 0$}.
  Note that we assumed that $\bm{\updelta}$ is convergent.
  This is not an additional assumption because the case when $\mathbf{s} \notin \ell^1$ is already handled by the previous two cases.
  In particular, if $\mathbf{d},\mathbf{s} \notin \ell^1$, then $\mathbf{d} \prec \mathbf{s}$, and if $\mathbf{d} \in \ell^1$ but $\mathbf{s} \notin \ell^1$ then we are in Case 2.
  Therefore, we may assume now that $\mathbf{d},\mathbf{s} \in \ell^1$ and hence $\bm{\updelta}$ is convergent.

  There are now two subcases.
  The first subcase is that $\bm{\updelta}$ is eventually constant, which is equivalent to saying that $\mathbf{d},\mathbf{s}$ have identical tails.
  In this case, apply the finite Thompson's \autoref{thm:thompson} to an initial segment $\mathbf{d}', \mathbf{s}'$ of the sequences $\mathbf{d}, \mathbf{s}$ which terminates \emph{after} the terms become identical.
  Clearly Thompson's theorem applies because weak majorization is guaranteed by hypothesis and the last terms in these finite sequences are the same, so the final inequality in Thompson majorization is satisfied.
  Thus there is a finite matrix $A_1$ with real-valued entries, singular value sequence $\mathbf{s}'$ and diagonal $\mathbf{d}'$.
  Because the remainders $\mathbf{d}'', \mathbf{s}''$ of the sequences $\mathbf{d}, \mathbf{s}$ are identical, the operator $A_2 := \diag \mathbf{s}'' = \diag \mathbf{d}''$ suffices for this portion of the sequences.
  Hence $A := A_1 \oplus A_2$ has real-valued entries, diagonal $\mathbf{d}$ and singular value sequence $s(A) = s(A_1 \oplus A_2) = \mathbf{s}$.

  The second subcase is the one where $\bm{\updelta}$ is not eventually constant, which means that $\delta_n > \lim \bm{\updelta}$ for infinitely many $n$.
  In this case, choose $k > 1$ large enough so that $\delta_n \ge \lim \bm{\updelta}$ for $n \ge k-1$.
  Moreover, since $\delta_n > \lim \bm{\updelta}$ for infinitely many $n$, $\mathbf{d}$ has infinite support and so we can ensure that $d_{k-1} > d_k$ (by possibly choosing a larger $k$).
  Then choose $k' > k$ so that $(\delta_{k'} - \lim \bm{\updelta}) \le \min\{ d_{k-1}-d_k, \lim \bm{\updelta} \}$ and also $\delta_{k'-1} \ge \delta_{k'}$.
  The first condition ensures $a := d_k + \delta_{k'} - \lim \bm{\updelta} \le d_{k-1}$ and $\delta_{k'} - \lim \bm{\updelta} \le \lim \bm{\updelta}$, whereas the second condition is equivalent to $d_{k'} \ge s_{k'}$.

  Now consider $\mathbf{s}' := \pren{s_1,\ldots,s_{k'}}$ and $\mathbf{d}' := \pren{d_1,\ldots,d_{k-1},a,d_{k+1},\ldots,d_{k'}}$.
  These are both nonincreasing since $\mathbf{d},\mathbf{s} \in c_0^{*}$ and $d_k \le a \le d_{k-1}$.
  We claim that $\mathbf{d}' \prec_w \mathbf{s}'$.
  To see this, note that for $n < k$ we have
  \begin{equation*}
    \sum_{j = 1}^n (s'_j - d'_j) = \sum_{j=1}^n (s_j - d_j) = \delta_n \ge 0.
  \end{equation*}
  For $n \ge k$ we have
  \begin{align*}
    \sum_{j = 1}^n (s'_j - d'_j) & = \sum_{j=1}^{k-1} (s_j - d_j) + (s_k - a) + \sum_{j=k+1}^n (s_j - d_j) \\
                                 & = \sum_{j=1}^n (s_j - d_j) -  ( \delta_{k'} - \lim \bm{\updelta} )                \\
                                 & \ge \delta_n - \lim \bm{\updelta} \ge 0,
  \end{align*}
  where the last inequality follows since $n \ge k$ and so $\mathbf{d} \prec_w \mathbf{s}$.

  Now consider $\mathbf{s}'' := \pren{a,s_{k'+1},s_{k'+2},\ldots}$ and $\mathbf{d}'' := \pren{d_k,d_{k'+1},d_{k'+2},\ldots}$.
  Both of these sequences are nonincreasing since $\mathbf{d},\mathbf{s} \in c_0^{*}$ and $a \ge d_k \ge d_{k'} \ge s_{k'} \ge s_{k'+1}$.
  We will show $\mathbf{d}'' \prec \mathbf{s}''$.
  Indeed,
  \begin{equation*}
    \sum_{j=1}^n (s''_j - d''_j) = a - d_k + \delta_{k'+n-1} - \delta_{k'} = \delta_{k'+n-1} - \lim \bm{\updelta} \ge 0.
  \end{equation*}
  Moreover, taking the limit as $n \to \infty$ attains zero, which means $\mathbf{d}'' \prec \mathbf{s}''$.

  Because $\mathbf{d}' \prec_w \mathbf{s}'$ and $s_{k'} \le d_{k'}$ we have  $\mathbf{d}' \prec_T \mathbf{s}'$ and so we can apply Thompson's theorem to obtain a $k' \times k'$ matrix $A_1$ with real-valued entries acting on $\Hil_1$ with diagonal $\mathbf{d}'$ and singular values $\mathbf{s}'$.
  Let the basis corresponding to $\mathbf{d}'$ be denoted by $\mathfrak{e}_1 := \{e_j\}_{j=1}^{k'}$.
  Let $A_2 := \diag (s_{k'+1},s_{k'+2},\ldots)$ act on $\Hil_2$ with respect to the basis $\mathfrak{e}_2 := \{e_{k'+j}\}_{j=1}^{\infty}$.
  Then the operator $A := A_1 \oplus A_2$ has real-valued entries, singular value sequence $\mathbf{s}$ and the compression $\tilde{A}_2$ of $A$ to $\spans\{e_k,\Hil_2\}$ is $\diag \mathbf{s}''$ with respect to the basis $\{e_k\} \cup \mathfrak{e}_2$ for that subspace.
  Moreover, the compression $\tilde{A}_1$ of $A$ onto $\spans\{e_k,\Hil_2\}^{\perp}$ has diagonal $(d_1,\ldots,\hat{d}_k,\ldots,d_{k'})$ (where the hat indicates $d_k$ is omitted).
  Because $\mathbf{d}'' \prec \mathbf{s}''$ we can apply the Schur--Horn theorem for positive compact operators (\autoref{thm:compact-schur-horn}) to conclude that $\diag \mathbf{s}''$ has $\mathbf{d}''$ as a diagonal in some basis.
  Moreover, this change of basis can be achieved via an orthogonal matrix (unitary with real-valued entries relative to this basis $\mathfrak{e}_1 \cup \mathfrak{e}_1$), and so $A$ has real-valued entries in the resulting basis.
  Therefore, by \autoref{lem:compression-diagonal}, $A$ has diagonal $\mathbf{d} = (d_1,\ldots,\hat{d}_k,\ldots,d_{k'}) \oplus \mathbf{d}''$.
\end{proof}

Together, \autoref{prop:nonnegative-diagonals-suffice}, \autoref{prop:fans-theorem} and \autoref{thm:compact-thompson-sufficiency} prove directly Thompson's theorem for compact operators.
\autoref{prop:fans-theorem} proves the statement, and for the converse \autoref{prop:nonnegative-diagonals-suffice} reduces to the case $\mathbf{d} \ge 0$ and \autoref{thm:compact-thompson-sufficiency} yields the rest.

\thompson

\section{Diagonals of unitary operators}

Our approach starts with unitaries possessing a diagonal of special type.
The next lemma is a curious feature about operators with a diagonal whose entries are almost norm-attaining in a summable sense.
This leads to \autoref{thm:unitary-thompson-necessity} which places a necessary condition on the diagonals of unitary operators whose entries approach the unit circle summably.
It turns out that for sequences of this type, this necessary condition is also sufficient (see \autoref{thm:unitary-thompson-sufficiency}).

\begin{lemma}
  \label{lem:hs-perturb-identity}
  Let $A$ be a contraction with diagonal $\mathbf{d}$ with respect to the basis $\mathfrak{e} = \{e_i\}_{i=1}^{\infty}$.
  \begin{equation*}
    \text{If } \sum_{j=1}^{\infty} \big( 1-\abs{d_j}^2 \big) < \infty, \text{ then $A - \diag \mathbf{d}$ is Hilbert--Schmidt,}
  \end{equation*}
  and so also is $\diag \mathbf{u} - A$, where $u_j = \frac{d_j}{\abs{d_j}}$ if $d_j \not= 0$ and $u_j = 1$ otherwise.
  In addition, if $d_j \ge 0$, then $I - A$ is Hilbert--Schmidt.
  Moreover, whenever $\abs{d_j} = 1$, $e_j$ is an eigenvector.
\end{lemma}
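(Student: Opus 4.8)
The plan is to estimate the Hilbert--Schmidt norm of $A - \diag\mathbf{d}$ by expanding in the basis $\mathfrak{e}$. Since $\diag\mathbf{d}$ agrees with $A$ along the diagonal, the matrix of $A - \diag\mathbf{d}$ has zero diagonal, so by Parseval's identity and $\angles{Ae_j,e_j} = d_j$,
\[
  \vnorm{A - \diag\mathbf{d}}_2^2 = \sum_{j=1}^{\infty} \sum_{i \ne j} \abs{\angles{Ae_j,e_i}}^2 = \sum_{j=1}^{\infty} \vpren{\vnorm{Ae_j}^2 - \abs{d_j}^2}.
\]
Because $A$ is a contraction, each $\vnorm{Ae_j}^2 \le 1$, hence the $j$th summand is at most $1 - \abs{d_j}^2$, and the hypothesis $\sum_j(1-\abs{d_j}^2) < \infty$ shows $A - \diag\mathbf{d}$ is Hilbert--Schmidt. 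Note that a contraction also forces $\abs{d_j} \le 1$ for every $j$.

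For the statement about $\diag\mathbf{u}$, I would write $\diag\mathbf{u} - A = \diag(\mathbf{u} - \mathbf{d}) + (\diag\mathbf{d} - A)$. The second summand is Hilbert--Schmidt by the previous paragraph, and for the first it suffices to observe that in all cases $\abs{u_j - d_j} = 1 - \abs{d_j}$: when $d_j \ne 0$ we have $d_j = u_j\abs{d_j}$ so $u_j - d_j = u_j(1-\abs{d_j})$ with $\abs{u_j} = 1$, and when $d_j = 0$ both sides equal $1$. Then
\[
  \sum_j \abs{u_j - d_j}^2 = \sum_j (1-\abs{d_j})^2 \le \sum_j (1-\abs{d_j})(1+\abs{d_j}) = \sum_j (1-\abs{d_j}^2) < \infty,
\]
so the diagonal operator $\diag(\mathbf{u} - \mathbf{d})$ is Hilbert--Schmidt, and hence so is the sum $\diag\mathbf{u} - A$. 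When $\mathbf{d} \ge 0$ one has $u_j = 1$ for every $j$, so $\diag\mathbf{u} = I$ and the assertion that $I - A$ is Hilbert--Schmidt follows immediately.

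Finally, suppose $\abs{d_j} = 1$. Then $1 = \abs{\angles{Ae_j,e_j}} \le \vnorm{Ae_j}\vnorm{e_j} = \vnorm{Ae_j} \le 1$, so equality holds throughout Cauchy--Schwarz; this forces $Ae_j$ to be a scalar multiple of $e_j$, and pairing with $e_j$ identifies the scalar as $d_j$, so $e_j$ is an eigenvector with eigenvalue $d_j$. (Equivalently, the nonnegative $j$th term $\vnorm{Ae_j}^2 - \abs{d_j}^2 = \sum_{i\ne j}\abs{\angles{Ae_j,e_i}}^2$ of the first display, being at most $1 - \abs{d_j}^2 = 0$, must vanish.) I do not anticipate a genuine obstacle in this proof; the only points requiring care are the bookkeeping in the Hilbert--Schmidt computation and the degenerate cases $d_j = 0$ and $\abs{d_j} = 1$ in the definition of $\mathbf{u}$.
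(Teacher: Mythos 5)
Your proof is correct and follows essentially the same route as the paper's: bound the off-diagonal Hilbert--Schmidt norm by $\sum_j(1-\abs{d_j}^2)$ using $\norm{Ae_j}\le 1$, reduce the $\diag\mathbf{u}$ claim to the diagonal difference via $\abs{u_j-d_j}=1-\abs{d_j}\le(1-\abs{d_j}^2)^{1/2}$, and use equality in Cauchy--Schwarz for the eigenvector claim. The only cosmetic difference is that you sum the inequality $\norm{Ae_j}^2-\abs{d_j}^2\le 1-\abs{d_j}^2$ term by term, whereas the paper sums over $j\le k$ and lets $k\to\infty$.
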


\begin{proof}
  Let $\{e_j\}_{j=1}^{\infty}$ denote the basis corresponding to the diagonal $\mathbf{d}$.
  Since $\norm{A} \le 1$, we know
  \begin{equation*}
    1 \ge \norm{Ae_j}^2 = \sum_{i=1}^{\infty} \abs{ \angles{Ae_j,e_i} }^2.
  \end{equation*}
  Summing over $1 \le j \le k$, we find
  \begin{equation*}
    k \ge \sum_{j=1}^k \abs{d_j}^2 + \sum_{j=1}^k \sum_{\substack{i=1 \\ i \not= j}}^{\infty} \abs{ \angles{ Ae_j,e_i } }^2.
  \end{equation*}
  Rearranging and letting $k \to \infty$, we obtain
  \begin{equation*}
    \sum_{j=1}^\infty \big( 1-\abs{d_j}^2 \big) \ge \sum_{\substack{i,j=1 \\ i \not= j}}^{\infty} \abs{ \angles{ Ae_j,e_i } }^2 = \norm{A - \diag \mathbf{d}}_2^2,
  \end{equation*}
  which proves $A - \diag \mathbf{d}$ is Hilbert--Schmidt since the left-hand side is finite by hypothesis.
  To prove $\diag \mathbf{u} - A$ is Hilbert--Schmidt, it suffices to prove that $\diag \mathbf{u} - \diag \mathbf{d}$ is Hilbert--Schmidt.
  To see this, when $d_j \not= 0$ simply note that
  \begin{equation*}
    \vabs{\frac{d_j}{\abs{d_j}}-d_j}^2 = \big(1-\abs{d_j} \big)^2 \le  \big( 1-\abs{d_j} \big) \big( 1+\abs{d_j} \big) = 1-\abs{d_j}^2,
  \end{equation*}
  and when $d_j = 0$, $\abs{u_j - d_j}^2 = 1 = 1-\abs{d_j}^2$.
  Hence
  \begin{equation*}
    \sum_{j=1}^{\infty} \abs{u_j-d_j}^2 \le \sum_{j=1}^{\infty} \big( 1-\abs{d_j}^2 \big) < \infty,
  \end{equation*}
  from which the second claim follows.

  Finally, suppose $\abs{d_j} = 1$ for some $j$.
  By the Cauchy--Schwarz inequality
  \begin{equation*}
    1 = \abs{d_j} = \abs{ \angles{ Ae_j,e_j } } \le \norm{Ae_j} \cdot \norm{e_j} \le \norm{A} \le 1,
  \end{equation*}
  and since we have equality, $Ae_j = d_je_j$.
\end{proof}

Via \autoref{prop:nonnegative-diagonals-suffice} the next theorem places a necessary condition on certain diagonals of unitary operators.
This can be viewed as an analogue of the final inequality of Thompson's theorem (\autoref{thm:thompson}).
To see the correspondence, note that if in Thompson's theorem, instead of nonincreasing order, we arrange $\abs{\mathbf{d}}$ and $\mathbf{s}$ in \emph{nondecreasing} order, then the final inequality may be rewritten as:
\begin{equation}
  \label{eq:formal-thompson-inequality}
  s_1 - \abs{d_1} \le \sum_{i=2}^N \big( s_i - \abs{d_i} \big).
\end{equation}
Moreover, for unitary operators we have $s_i = 1$ for all $1 \le i \le N$.
Passing in \eqref{eq:formal-thompson-inequality} to the limit as $N \to \infty$ we formally obtain the necessary condition of \autoref{thm:unitary-thompson-necessity}.
The proof of this next theorem proceeds by establishing an $\epsilon$-approximate form of \eqref{eq:formal-thompson-inequality} using Thompson's theorem applied to a finite compression of the unitary operator followed by examining limiting behavior.

\begin{theorem}
  \label{thm:unitary-thompson-necessity}
  If $U$ is a unitary operator with nonnegative nondecreasing diagonal $\mathbf{d}$ for which $\sum_{j=1}^{\infty} (1-d_j)$ is finite, then
  \begin{equation*}
    1-d_1 \le \sum_{j=2}^{\infty} (1-d_j).
  \end{equation*}
\end{theorem}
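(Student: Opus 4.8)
The plan is to pass to finite compressions of $U$, apply the finite Thompson's theorem (\autoref{thm:thompson}) to them, and then take a limit. Fix the orthonormal basis $\mathfrak{e} = \{e_i\}_{i=1}^{\infty}$ realizing the diagonal $\mathbf{d}$, let $\Kil_N := \spans\{e_1,\ldots,e_N\}$ with projection $Q_N : \Hil \to \Kil_N$, and let $U_N := Q_N U Q_N^{*}$ be the compression of $U$ to $\Kil_N$. Since $U$ is a contraction, so is $U_N$, hence $s_i(U_N) \le 1$ for every $i$; moreover $U_N$ has diagonal $(d_1,\ldots,d_N)$, which is nonnegative, nondecreasing, and bounded by one, so the nonincreasing rearrangement of the moduli of its diagonal is $(d_N,d_{N-1},\ldots,d_1)$. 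Applying the final inequality of Thompson's theorem to $U_N$ and then using $s_i(U_N) \le 1$ gives
\begin{equation*}
  \sum_{m=2}^{N} d_m - d_1 \;\le\; \sum_{i=1}^{N-1} s_i(U_N) - s_N(U_N) \;\le\; (N-1) - s_N(U_N),
\end{equation*}
which rearranges (using $(N-1) - \sum_{m=2}^N d_m = \sum_{m=2}^N (1-d_m)$) to
\begin{equation*}
  s_N(U_N) \;\le\; d_1 + \sum_{m=2}^{N}\big(1-d_m\big).
\end{equation*}

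The heart of the argument is to show $s_N(U_N) \to 1$. Working with the Hilbert--Schmidt norm, $\sum_{i=1}^{N} s_i(U_N)^2 = \trace(U_N^{*}U_N) = \sum_{i,j=1}^{N}\abs{\angles{Ue_j,e_i}}^2$, so writing $1 = \norm{Ue_j}^2 = \sum_{i=1}^{\infty}\abs{\angles{Ue_j,e_i}}^2$ via Parseval we get
\begin{equation*}
  \sum_{i=1}^{N}\big(1-s_i(U_N)^2\big) = \sum_{j=1}^{N}\sum_{i=N+1}^{\infty}\abs{\angles{Ue_j,e_i}}^2.
\end{equation*}
For $j \le N$ every index $i > N$ satisfies $i \ne j$, so $\sum_{i>N}\abs{\angles{Ue_j,e_i}}^2 \le \sum_{i\ne j}\abs{\angles{Ue_j,e_i}}^2 = 1 - d_j^2$, and $\sum_j (1-d_j^2) \le 2\sum_j (1-d_j) < \infty$ by hypothesis. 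Splitting the double sum at a fixed index $M$: the part with $j > M$ is at most $\sum_{j>M}(1-d_j^2)$, which is small uniformly in $N$ once $M$ is large, while the part with $j \le M$ is a finite sum of tails of convergent series and hence tends to $0$ as $N \to \infty$. Therefore $\sum_{i=1}^{N}\big(1-s_i(U_N)^2\big) \to 0$; in particular $1 - s_N(U_N)^2 \to 0$, and since $s_N(U_N) \in [0,1]$ we conclude $s_N(U_N) \to 1$.

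Combining the two estimates, for every $N$,
\begin{equation*}
  s_N(U_N) \;\le\; d_1 + \sum_{m=2}^{N}\big(1-d_m\big) \;\le\; d_1 + \sum_{m=2}^{\infty}\big(1-d_m\big),
\end{equation*}
and letting $N \to \infty$ the left-hand side tends to $1$, giving $1 \le d_1 + \sum_{m=2}^{\infty}(1-d_m)$, which is exactly the asserted inequality $1-d_1 \le \sum_{j=2}^{\infty}(1-d_j)$. The main obstacle is the middle step: the convergence $s_N(U_N) \to 1$ is precisely where the summability hypothesis $\sum_j(1-d_j) < \infty$ is used, forcing the off-diagonal mass of $U$ that escapes the window $\{1,\ldots,N\}$ to vanish; absent this hypothesis the least singular value of a compression of a unitary need not approach $1$ at all (the bilateral shift is the standard example), so the estimate above would carry no information.
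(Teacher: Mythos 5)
Your proof is correct and follows essentially the same route as the paper: compress $U$ to $\spans\{e_1,\dots,e_N\}$, apply the final inequality of Thompson's theorem (\autoref{thm:thompson}) to the compression, show that the smallest singular value of the compression tends to $1$ using the summability of $\sum_j(1-d_j^2)$, and pass to the limit. The only cosmetic difference is that you verify $\sum_{i=1}^N\bigl(1-s_i(U_N)^2\bigr)\to 0$ by a direct double-sum/tail-splitting estimate, whereas the paper packages the same computation through \autoref{lem:hs-perturb-identity} (showing $I-U$ is Hilbert--Schmidt) and a trace identity for $P_n-A_n^{*}A_n$.
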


\begin{proof}
  Note that since $d_i \le \norm{U} = 1$,
  \begin{equation*}
    \sum_{j=1}^{\infty} (1-d_j^2) = \sum_{j=1}^{\infty} (1-d_j)(1+d_j) \le 2 \sum_{j=1}^{\infty} (1-d_j) < \infty.
  \end{equation*}
  Thus $\sum_{j=1}^{\infty} (1-d_j^2) < \infty$ if and only if $\sum_{j=1}^{\infty} (1-d_j) < \infty$.
  Therefore, by \autoref{lem:hs-perturb-identity} and since $\mathbf{d} \ge 0$ we find that $I-U$ is Hilbert--Schmidt.
  Let $P_n$ denote the projection onto $\spans \{e_1,\ldots,e_n\}$, where $e_j$ is the basis element corresponding to the diagonal entry $d_j$.
  Let $A_n = P_nUP_n$ and $B_n = P_nUP_n^{\perp} = P_n(U-I)P_n^{\perp}$.
  Since $\norm{P_n-A_n}_2^2 = \norm{P_n(I-U)P_n}_2^2 \to \norm{I-U}_2^2$, we know that $\norm{B_n^{*}B_n}_1 = \norm{B_n}_2^2 =: \epsilon_n \to 0$.
  Since $U^{*}U = I$ we find that $A_n^{*}A_n + B_n^{*}B_n = P_n$.
  Rearranging, we find that $B_n^{*}B_n = P_n - A_n^{*}A_n \ge 0$, and so the eigenvalues of this latter operator are simply $1-(s_j(A_n))^2 \ge 0$ for $1 \le j \le n$.
  Taking the trace and using standard inequalities yields
  \begin{equation*}
    \sum_{j=1}^n \big( 1-s_j(A_n) \big) \le \sum_{j=1}^n \Big( 1- \big( s_j(A_n) \big)^2 \Big) = \trace (B_n^{*}B_n) = \epsilon_n,
  \end{equation*}
  and hence in particular $-s_n(A_n) \le -1 + \epsilon_n$.
  Finally, we apply the finite version of Thompson's theorem to $A_n$ and its diagonal sequence $(d_1,\ldots,d_n)$ to obtain
  \begin{equation*}
    \sum_{j=2}^n d_j - d_1 \le \sum_{j=1}^{n-1} s_j(A_n) - s_n(A_n) \le (n-1) - 1 + \epsilon_n,
  \end{equation*}
  or equivalently,
  \begin{equation*}
    -\epsilon_n \le \sum_{j=2}^n (1-d_j) - (1-d_1),
  \end{equation*}
  and taking the limit as $n \to \infty$ proves the desired inequality.
\end{proof}

\unitary

\begin{proof}
  By \autoref{prop:nonnegative-diagonals-suffice} we may without loss of generality restrict consideration to $\mathbf{d} \ge 0$.

  Suppose $\mathbf{d}$ is the diagonal of a unitary operator $U$.
  Then $d_i \le \norm{U} = 1$ for all $i \in \mathbb{N}$.
  When the sum in \eqref{eq:unitary-thompson-condition} is infinite there is nothing to prove for the implication.
  When the sum is finite the infimum is necessarily attained and we can relabel the diagonal entries so that this occurs at $d_1$. The necessity of condition \eqref{eq:unitary-thompson-condition} is then established by \autoref{thm:unitary-thompson-necessity}.

  For the converse, suppose that $0 \le \mathbf{d} \le 1$ and satisfies \eqref{eq:unitary-thompson-condition}.
  If the sum is infinite, we can apply Kadison's carpenter's theorem  (see \autoref{thm:kadison}) to the sequence $\frac{1}{2}(\mathbf{d+1})$ to get a projection $P$ with this as its diagonal.
  Then the symmetry (selfadjoint unitary) $U = 2P-I$ has diagonal $\mathbf{d}$.
  Moreover, Bownik and Jasper have shown in \cite{BJ-2014-CMB} that the projection $P$ can be chosen to have real-valued entries, and so the resulting unitary $U$ also has real-valued entries.

  Now suppose the sum in condition \eqref{eq:unitary-thompson-condition} is finite.
  As previously mentioned, the infimum is attained and there is no loss in assuming this occurs at $d_1$.
  In this context condition \eqref{eq:unitary-thompson-condition} can be rewritten as
  \begin{equation*}
    (1-d_1) \le \sum_{j=2}^{\infty} (1-d_j) < \infty.
  \end{equation*}
  Moreover, we can even assume the sequence $\mathbf{d}$ is nondecreasing.

  Let $N$ be the smallest positive integer $k$ (necessarily greater than one) which satisfies
  \begin{equation}
    \label{eq:smallest-N}
    \sum_{j=1}^k (1-d_j) > \sum_{j=k+1}^{\infty} (1-d_j).
  \end{equation}

  \noindent \emph{Claim:}
  There exists a finite sequence $\mathbf{s} = (s_j)_{j=1}^N \ge 0$ for which
  \begin{enumerate}[label=(\alph*)]
  \item
    \label{item:uts-tilde-d-increasing}
    $\mathbf{s}$ is nondecreasing and bounded above by one;
  \item
    \label{item:uts-bar-d-thompson-tilde-d}
    $\mathbf{\bar{d}}^{*} \prec_T \mathbf{s}^{*}$, where $\mathbf{\bar d} := (d_j)_{j=1}^N$;
  \item
    \label{item:uts-tilde-d-majorizes-tail-d}
    $\mathbf{\hat{d}} \prec ((\mathbf{1-s}) \oplus \mathbf{0})$, where $\mathbf{\hat{d}} := (1-d_{N+j})_{j=1}^{\infty}$.
  \end{enumerate}
  Note that because $\mathbf{\bar{d}},\mathbf{s}$ are nondecreasing finite sequences, their nonincreasing rearrangements $\mathbf{\bar{d}}^{*},\mathbf{s}^{*}$ simply reverse the order.

  \emph{Proof of claim:}
  Let $M$ denote the smallest positive integer $k$ satisfying
  \begin{equation}
    \label{eq:smallest-M}
    \sum_{j=1}^k (1-d_j) > \sum_{j=N+1}^{\infty} (1-d_j).
  \end{equation}
  The set of $k \in \mathbb{N}$ satisfying \eqref{eq:smallest-M} is nonempty because it contains $N$, and therefore we also have $M \le N$.
  Then for $1 \le k \le N$ define
  \begin{equation*}
    s_k :=
    \begin{cases}
      d_k                                                          & \text{if $k<M$} \\
      1 + \sum_{j=1}^{M-1} (1-d_j) - \sum_{j=N+1}^{\infty} (1-d_j) & \text{if $j=M$} \\
      1                                                            & \text{if $k>M$.}
    \end{cases}
  \end{equation*}
  For $M=1$ regard $\sum_{j=1}^{M-1} (1-d_j)$ as an empty sum.
  Note that $s_M \le 1$ by our choice of $M$.
  Moreover,
  \begin{equation*}
    s_M - d_M = \sum_{j=1}^M (1-d_j) - \sum_{j=N+1}^{\infty} (1-d_j) > 0,
  \end{equation*}
  and hence $\mathbf{s}$ is nondecreasing and bounded above by one because the same is true of $\mathbf{d}$, thereby establishing condition \ref{item:uts-tilde-d-increasing}.

  In fact, this additionally shows $\mathbf{\bar{d}} \le \mathbf{s}$ and hence $\mathbf{\bar{d}}^{*} \prec_w \mathbf{s}^{*}$.
  If $M > 1$, then $s_1 = d_1$ and so $\mathbf{\bar{d}}^{*} \prec_T \mathbf{s}^{*}$ trivially.
  If $M = 1$, then
  \begin{equation*}
    s_1 - d_1 = (1-d_1) - \sum_{j=N+1}^{\infty} (1-d_j)
    \le \sum_{j=2}^{\infty} (1-d_j) - \sum_{j=N+1}^{\infty} (1-d_j)
    = \sum_{j=2}^N (1-d_j)
    = \sum_{j=2}^N (s_j-d_j).
  \end{equation*}
  Therefore $\mathbf{\bar{d}}^{*} \prec_T \mathbf{s}^{*}$, thereby proving condition \ref{item:uts-bar-d-thompson-tilde-d}.

  For condition \ref{item:uts-tilde-d-majorizes-tail-d}, if $1 \le k < M$ then
  \begin{equation*}
    \sum_{j=N+1}^{N+k} (1-d_j)
    \le \sum_{j=1}^k (1-d_j)
    = \sum_{j=1}^k (1-s_j),
  \end{equation*}
  and for $k \ge M$,
  \begin{equation*}
    \sum_{j=N+1}^{N+k} (1-d_j)
    \le \sum_{j=N+1}^{\infty} (1-d_j)
    = \sum_{j=1}^M (1-s_j)
  \end{equation*}
  by the definitionn of $\mathbf{s}$, particularly $s_M$.
  Furthermore, since $s_k = 1$ if $M < k \le N$ then we can replace $M$ with $N$ in the last term of the above display which establishes condition \ref{item:uts-tilde-d-majorizes-tail-d}.

  Finally we construct the promised unitary operator with real-valued entries.
  Fix any basis $\mathfrak{e} = \{ e_j \}_{j=1}^{\infty}$ and let $U = \diag (\e^{\im\theta_1},\e^{-\im\theta_1}, \ldots, \e^{\im\theta_N}, \e^{-\im\theta_N}, 1, 1, 1, \ldots)$ where $\theta_j := \arccos s^{*}_j$ for $1 \le j \le N$.
  Then for $1 \le j \le 2N$, define
  \begin{equation*}
    f_j :=
    \begin{cases}
      \frac{e_j + e_{j+1}}{\sqrt{2}}      & \text{if $j$ is odd}   \\
      \frac{\im(e_{j-1} - e_j)}{\sqrt{2}} & \text{if $j$ is even.} \\
    \end{cases}
  \end{equation*}
  Then $U$ restricted to $\spans \{f_{2j-1},f_{2j}\}$ is
  \begin{equation*}
    \begin{pmatrix}
      \cos \theta_j  & \sin \theta_j \\
      -\sin \theta_j & \cos \theta_j \\
    \end{pmatrix}
    =
    \begin{pmatrix}
      s^{*}_j        & \sin \theta_j \\
      -\sin \theta_j & s^{*}_j       \\
    \end{pmatrix}
  \end{equation*}
  relative to $\{f_{2j-1},f_{2j}\}$.
  Then with respect to the basis $\mathfrak{f} := \{ f_{2j-1} \}_{j=1}^N \cup \{ f_{2j} \}_{j=1}^N \cup \{ e_j \}_{j=2N+1}^{\infty}$, $U$ has the form
  \begin{equation*}
    U =
    \begin{pmatrix}
      \diag \mathbf{s}^{*}          & \diag(\sin \theta_j)_{j=1}^N & 0 \\
      \diag(-\sin \theta_j)_{j=1}^N & \diag \mathbf{s}^{*}         & 0 \\
      0                             & 0                            & I \\
    \end{pmatrix},
  \end{equation*}
  which is orthogonal.

  Because $\mathbf{\bar{d}}^{*} \prec_T \mathbf{s}^{*}$ (due to \ref{item:uts-bar-d-thompson-tilde-d}), we can use Thompson's theorem to obtain orthogonal matrices $V,W$ acting on $M_N(\mathbb{C})$ so that $V(\diag \mathbf{s}^{*})W$ has diagonal $\mathbf{\bar{d}}^{*}$.
  Therefore with respect to the basis $\mathfrak{f}$, the orthogonal matrix $\tilde{U} := (V \oplus I \oplus I) U (W \oplus I \oplus I)$ has the form
  \begin{equation*}
    \tilde{U} =
    \begin{pmatrix}
      \diag \mathbf{\bar{d}}^{*} & *                    & 0 \\
      *                          & \diag \mathbf{s}^{*} & 0 \\
      0                          & 0                    & I \\
    \end{pmatrix}.
  \end{equation*}

  Finally, we consider the compression of $\tilde{U}$ to the subspace $\Kil := \spans \{ f_{2j-1} \}_{j=1}^N$ and its complement.
  We note that the compression $U_1$ of $\tilde{U}$ to $\Kil$ has diagonal $\mathbf{\bar{d}}^{*}$ and the compression $U_2$ of $\tilde{U}$ to $\Kil^{\perp}$ is $\diag \mathbf{s}^{*} \oplus I$.
  The operator $I_{\Kil^{\perp}} - U_2$ is thus a finite rank positive operator with singular value sequence $((\mathbf{1-s}) \oplus \mathbf{0})$.
  Because $\mathbf{\hat{d}} \prec ((\mathbf{1-s}) \oplus \mathbf{0})$ (due to condition \ref{item:uts-tilde-d-majorizes-tail-d}) we can apply the Schur--Horn theorem  (\autoref{thm:compact-schur-horn}, which can be achieved via an orthogonal unitary matrix) to conclude that $\mathbf{\hat{d}}$ is a diagonal of $I_{\Kil^{\perp}} - U_2$.
  Therefore $(d_j)_{j=N+1}^{\infty}$ is a diagonal of $U_2$.
  By \autoref{lem:compression-diagonal} we find that $\mathbf{d} = \mathbf{\bar{d}}^{*} \oplus (d_j)_{j=N+1}^{\infty}$ is a diagonal of $\tilde{U}$ and is achieved over real Hilbert space.
\end{proof}

\section{Extremal cases and selfadjoint operators}

In the finite dimensional setting, Thompson's theorem (\autoref{thm:thompson}) has another surprise in store when the final inequality is tight (i.e., the two sides are actually equal).
Certainly, when the inequality is tight, $\mathbf{\tilde{d}} := \big( \abs{d_1},\ldots,\abs{d_{N-1}},-\abs{d_N} \big) \prec (s_1,\ldots,s_{N-1},-s_N)$.
Then by the Schur--Horn theorem (\autoref{thm:schur-horn}), there is a selfadjoint matrix $A$ with diagonal $\mathbf{\tilde{d}}$ and singular value sequence $\mathbf{s}$.
However, Thompson's work \cite[Proof of Lemma 5]{Tho-1977-SJAM} guarantees a sort of converse: any matrix $A$ with diagonal $\mathbf{\tilde{d}}$ so that $\abs{\mathbf{\tilde{d}}} > 0$ and singular value sequence $\mathbf{s}$ is selfadjoint.
The next lemma is one of the key tools in Thompson's proof.

\begin{lemma}[Thompson \protect{\cite[Lemma 3]{Tho-1977-SJAM}}]
  \label{lem:2x2-trace-equal-trace-norm}
  If $A \in M_2(\mathbb{C})$ has nonnegative diagonal entries $d_1, d_2$ and singular values $s_1, s_2$, then $s_1 + s_2 \ge d_1 + d_2$ with equality if and only if $A$ is positive.
\end{lemma}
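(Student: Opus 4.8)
**The plan is to prove Lemma~\ref{lem:2x2-trace-equal-trace-norm} by writing out the $2\times 2$ matrix explicitly and computing.** Write
\[
  A = \begin{pmatrix} d_1 & x \\ y & d_2 \end{pmatrix},
  \qquad d_1,d_2 \ge 0,\ x,y \in \mathbb{C}.
\]
The inequality $s_1+s_2 \ge d_1+d_2$ is already a special case of Fan's theorem (\autoref{prop:fans-theorem}) applied to the diagonal $(d_1,d_2)$, since $d_1+d_2 = \abs{d}_1^{*}+\abs{d}_2^{*} \le s_1+s_2$; alternatively it follows from $\trace A = d_1+d_2 \le \abs{\trace A} \le \norm{A}_1 = s_1+s_2$. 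So the content is the equality case. First I would record the two classical identities for a $2\times 2$ matrix: $s_1 s_2 = \abs{\det A} = \abs{d_1 d_2 - xy}$ and $s_1^2 + s_2^2 = \norm{A}_2^2 = d_1^2 + d_2^2 + \abs{x}^2 + \abs{y}^2$. From these, $(s_1+s_2)^2 = s_1^2+s_2^2+2s_1s_2 = d_1^2+d_2^2+\abs{x}^2+\abs{y}^2 + 2\abs{d_1d_2-xy}$.

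**Next I would expand $(d_1+d_2)^2$ and subtract.** We get
\[
  (s_1+s_2)^2 - (d_1+d_2)^2 = \abs{x}^2 + \abs{y}^2 - 2d_1d_2 + 2\abs{d_1 d_2 - xy}.
\]
Using $\abs{d_1 d_2 - xy} \ge d_1 d_2 - \operatorname{Re}(xy) $ (since $d_1d_2$ is real) — actually I would use the cleaner bound $\abs{d_1d_2 - xy} \ge d_1 d_2 - \abs{xy}$, so the right-hand side is $\ge \abs{x}^2 + \abs{y}^2 - 2\abs{xy} = (\abs{x}-\abs{y})^2 \ge 0$, re-deriving the inequality. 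Equality throughout forces simultaneously $(\abs{x}-\abs{y})^2 = 0$, i.e.\ $\abs{x} = \abs{y}$, \emph{and} $\abs{d_1d_2 - xy} = d_1d_2 - \abs{xy}$, which (since $d_1d_2 \ge 0$) forces $xy = \abs{xy} \ge 0$ to be real and nonnegative \emph{and} $\abs{xy} \le d_1 d_2$. Combining $\abs{x}=\abs{y}$ with $xy \ge 0$ real gives $\bar{x} = y$ (writing $x = \abs{x}e^{i\alpha}$, then $xy \ge 0$ forces $y = \abs{x}e^{-i\alpha} = \bar x$), so $A = A^{*}$ is selfadjoint. Finally, a selfadjoint $2\times 2$ matrix has singular values $s_i = \abs{\lambda_i}$ where $\lambda_i$ are its (real) eigenvalues, and $s_1+s_2 = \abs{\lambda_1}+\abs{\lambda_2} \ge \abs{\lambda_1+\lambda_2} = \abs{\trace A} = d_1+d_2$; equality here forces $\lambda_1,\lambda_2$ to have the same sign, and since their sum $d_1+d_2 \ge 0$ is nonnegative, both eigenvalues are $\ge 0$, i.e.\ $A$ is positive. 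Conversely, if $A$ is positive then its singular values equal its eigenvalues and $s_1+s_2 = \trace A = d_1+d_2$, so the equality holds. This closes the iff.

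**The main obstacle — really the only subtle point — is bookkeeping the equality conditions carefully.** The chain of inequalities $(s_1+s_2)^2 - (d_1+d_2)^2 \ge \abs{x}^2+\abs{y}^2 - 2\abs{xy} \ge 0$ passes through two separate estimates (the triangle-type bound on $\abs{d_1d_2 - xy}$ and the AM--GM bound $\abs{x}^2+\abs{y}^2 \ge 2\abs{xy}$), and I must extract \emph{both} equality conditions and see that together they pin $A$ down to a positive matrix rather than merely a selfadjoint or normal one. In particular one must not forget the constraint $\abs{xy} \le d_1 d_2$ coming from $\abs{d_1 d_2 - xy} = d_1 d_2 - \abs{xy} \ge 0$, which is exactly what rules out selfadjoint-but-indefinite matrices (e.g.\ $\bigl(\begin{smallmatrix} 0 & 1 \\ 1 & 0\end{smallmatrix}\bigr)$ has $d_1=d_2=0$, $s_1=s_2=1$, and $d_1+d_2 < s_1+s_2$, consistent with the lemma). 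An alternative, perhaps slicker, route avoiding the det/trace algebra: observe $A^{*}A = (A-A^{*})^{*}(\,\cdot\,)$-type identities are messier, so I would stick with the determinant–Frobenius computation above, which is the shortest path.
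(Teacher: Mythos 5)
Your proof is correct. Note that the paper itself gives no proof of this lemma---it is quoted verbatim from Thompson's 1977 paper (his Lemma 3)---so there is nothing internal to compare against; your determinant--Frobenius computation is a legitimate self-contained verification. The two identities $s_1s_2=\abs{\det A}$ and $s_1^2+s_2^2=\trace(A^{*}A)$ are right, the chain
\[
(s_1+s_2)^2-(d_1+d_2)^2=\abs{x}^2+\abs{y}^2-2d_1d_2+2\abs{d_1d_2-xy}\ \ge\ (\abs{x}-\abs{y})^2\ \ge\ 0
\]
is valid, and you correctly extract both equality conditions ($\abs{x}=\abs{y}$ and $xy$ real, nonnegative, $\le d_1d_2$), which together give $y=\bar{x}$, hence selfadjointness; the final pass through the eigenvalues (in fact $\abs{\lambda_1}+\abs{\lambda_2}=\lambda_1+\lambda_2$ directly forces $\lambda_1,\lambda_2\ge 0$) settles positivity, and the converse is immediate. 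The only stylistic remark: your alternative derivation of the inequality via \autoref{prop:fans-theorem} is fine but unnecessary, since the inequality drops out of the same computation used for the equality case.
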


Note that this is one example where we can conclude an operator is selfadjoint based on its diagonal.
The generalization of this to trace-class operators is a simple consequence of the Cauchy--Schwarz inequality.
In fact, the proof given below works in any semifinite von Neumann algebra with a faithful normal semifinite trace.
This proof appeared for type II$_1$ factors in the work of Kennedy and Skoufranis \cite{KS-2014}, who attribute their proof to David Sherman.
Here we present the $B(\Hil)$ version.

\begin{theorem}
  \label{thm:finite-trace-norm-inequality}
  If $A \in B(\Hil)$ is trace-class, then $\abs{\trace A} \le \trace \abs{A}$ with equality if and only if $cA$ is positive for some scalar $\abs{c} =1$.
\end{theorem}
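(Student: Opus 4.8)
The plan is to run the standard Cauchy--Schwarz argument for the scalar inequality $\abs{\sum_j \mu_j z_j} \le \sum_j \mu_j$ (for $\mu_j\ge 0$, $\abs{z_j}\le 1$) through the polar decomposition of $A$, computing the trace in the eigenbasis of $\abs{A}$.

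First I would write the polar decomposition $A = V\abs{A}$, where $V$ is the partial isometry with $V^{*}V = P$, the projection onto $\overline{\ran\abs{A}}$, and $\ker V = \ker A$; note $P\abs{A} = \abs{A}$. Since $A$ is trace-class, $\abs{A}$ is positive and compact, so there is an orthonormal basis $\{e_j\}$ with $\abs{A}e_j = \mu_j e_j$, $\mu_j\ge 0$, and $\sum_j \mu_j = \trace\abs{A} < \infty$. Computing the trace in this basis gives $\trace A = \sum_j \angles{Ae_j,e_j} = \sum_j \mu_j\angles{Ve_j,e_j}$, and this series converges absolutely since $\norm{V}\le 1$. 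Hence
\[
  \abs{\trace A} \le \sum_j \mu_j\,\abs{\angles{Ve_j,e_j}} \le \sum_j \mu_j = \trace\abs{A},
\]
which is the asserted inequality.

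Next I would examine equality. If $A=0$ everything is trivial, so assume $A\neq 0$, whence at equality $\trace\abs{A} = \abs{\trace A} > 0$. Equality in the second inequality forces $\mu_j\bigl(1-\abs{\angles{Ve_j,e_j}}\bigr)=0$ for every $j$, so $\abs{\angles{Ve_j,e_j}} = 1$ whenever $\mu_j>0$. Equality in the first (triangle) inequality for the absolutely convergent series $\sum_j \mu_j\angles{Ve_j,e_j}$ forces all nonzero terms to share a common argument, i.e.\ there is a scalar $c$ with $\abs{c}=1$ and $\angles{Ve_j,e_j}=c$ whenever $\mu_j>0$. For such $j$ one has $e_j\in\overline{\ran\abs{A}}$, so $\norm{Ve_j}=1$; combined with $\abs{\angles{Ve_j,e_j}}=1$, the Cauchy--Schwarz equality condition yields $Ve_j = ce_j$. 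When $\mu_j=0$, $e_j\in\ker V$. Therefore $V = cP$, so $A = V\abs{A} = cP\abs{A} = c\abs{A}$, giving $\bar c A = \abs{A}\ge 0$ with $\abs{\bar c}=1$. Conversely, if $cA\ge 0$ for some $\abs{c}=1$, then $cA = \abs{cA} = \abs{A}$ (since $\abs{c}^2=1$), so $c\,\trace A = \trace\abs{A}\ge 0$ and thus $\abs{\trace A} = \trace\abs{A}$.

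The genuinely routine inputs are the basis-independence of the trace and the equality condition in the triangle inequality for a convergent complex series; I do not anticipate a real obstacle. The one point needing a little care is promoting the pointwise equalities $\abs{\angles{Ve_j,e_j}}=1$ to a \emph{single} unimodular constant $c$ valid for all $j$ with $\mu_j>0$ — this is exactly where the triangle-equality condition is used — after which $V = cP$ and the positivity of $\bar cA$ drop out immediately. (As remarked in the excerpt, the same proof works verbatim in any semifinite von Neumann algebra with a faithful normal semifinite trace.)
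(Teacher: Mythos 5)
Your argument is correct, but it takes a genuinely different route from the paper's. The paper also begins with the polar decomposition $A = U\abs{A}$, but then treats $\abs{A}^{\frac{1}{2}}$ as a Hilbert--Schmidt operator and applies the Cauchy--Schwarz inequality for the inner product $(X,Y)\mapsto\trace(Y^{*}X)$ to the pair $U\abs{A}^{\frac{1}{2}}$, $\abs{A}^{\frac{1}{2}}$; equality there directly forces $U\abs{A}^{\frac{1}{2}} = c\abs{A}^{\frac{1}{2}}$ and hence $A = c\abs{A}$. You instead diagonalize $\abs{A}$ in an eigenbasis and reduce everything to two scalar facts: the equality case of the triangle inequality for an absolutely convergent series, and the equality case of the scalar Cauchy--Schwarz inequality applied to $\angles{Ve_j,e_j}$. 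Your version is more elementary (no Hilbert--Schmidt inner product needed), and your handling of the equality case --- first pinning $\abs{\angles{Ve_j,e_j}}=1$ on the support of $\abs{A}$, then extracting a single phase $c$ from the triangle-equality condition, then upgrading to $Ve_j = ce_j$ and $V = cP$ --- is complete. The one caveat concerns your closing parenthetical: it is the paper's basis-free Hilbert--Schmidt argument that transfers verbatim to a semifinite von Neumann algebra with a faithful normal semifinite trace, whereas your eigenbasis computation relies on $\abs{A}$ being diagonalizable in an orthonormal basis, which is special to $B(\Hil)$ (more generally, to atomic algebras); in the general setting your argument would have to be replaced by the spectral-projection or operator Cauchy--Schwarz version.
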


\begin{proof}
  Let $A = U\abs{A}$ be the polar decomposition, so that $U^{*}U$ is the projection onto the range of $\abs{A}$.
  Note that $\abs{A}^{\frac{1}{2}}$ is Hilbert--Schmidt since $A$ (equivalently, $\abs{A}$) is trace-class.
  Moreover, given $A,B$ Hilbert--Schmidt, the mapping $(A,B) \mapsto \trace(B^{*}A)$ is an inner product.
  Therefore by the Cauchy--Schwarz inequality
  \begin{equation*}
    \abs{\trace A} = \abs{\trace ( U\abs{A}^{\frac{1}{2}} \abs{A}^{\frac{1}{2}} ) } \le \trace ( \abs{A}^{\frac{1}{2}} U^{*}U \abs{A}^{\frac{1}{2}} )^{\frac{1}{2}} ( \trace \abs{A} )^{\frac{1}{2}} = \trace \abs{A},
  \end{equation*}
  with equality if and only if $U\abs{A}^{\frac{1}{2}} = c\abs{A}^{\frac{1}{2}}$ for some scalar $c$, and hence $A = c\abs{A}$.
  Since $\abs{A}^2 = A^{*}A = \abs{c}^2 \abs{A}^2$, $\abs{c} = 1$ as long as $A \not= 0$.
  Of course, the result is trivially true for $A = 0$.
\end{proof}

We also require a basic fact.

\begin{lemma}
  \label{lem:liminf-sum-decreasing}
  For real-valued sequences $(a_n), (b_n)$ with $(b_n)$ nonincreasing,
  \begin{equation*}
    \liminf (a_n + b_n) = \liminf a_n + \inf b_n.
  \end{equation*}
\end{lemma}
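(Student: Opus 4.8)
The plan is to reduce the statement to the elementary fact that $\liminf(x_n+y_n)=\liminf x_n+\lim y_n$ whenever $(y_n)$ has a finite limit, observing that the hypothesis on $(b_n)$ enters only through the identity $\inf_n b_n=\lim_n b_n$.

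First I would set $b:=\inf_n b_n$ and note that, since $(b_n)$ is nonincreasing, $b_n\downarrow b$, so $\lim_n b_n=b$, a priori in $[-\infty,\infty)$. Suppose first that $b$ is finite. To obtain $\liminf(a_n+b_n)\le\liminf a_n+b$, choose a subsequence with $a_{n_k}\to\liminf a_n$; then $b_{n_k}\to b$ forces $a_{n_k}+b_{n_k}\to\liminf a_n+b$, and $\liminf(a_n+b_n)$ is at most the limit along any convergent subsequence. For the reverse inequality, choose a subsequence with $a_{n_k}+b_{n_k}\to\liminf(a_n+b_n)$; then $a_{n_k}=(a_{n_k}+b_{n_k})-b_{n_k}\to\liminf(a_n+b_n)-b$, so $\liminf a_n\le\liminf(a_n+b_n)-b$. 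Together these give equality, the computations remaining valid in the extended reals even when one of the liminfs is $\pm\infty$.

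If instead $b=-\infty$, then $b_n\to-\infty$, and the right-hand side is $\liminf a_n+\inf b_n=-\infty$; the only excluded configuration is $\liminf a_n=+\infty$, which would render the right-hand side the undefined $\infty-\infty$. Picking a subsequence along which $a_{n_k}$ is bounded above — possible since $\liminf a_n<+\infty$ — and noting $a_{n_k}+b_{n_k}\to-\infty$ shows $\liminf(a_n+b_n)=-\infty$ as well. I do not anticipate any genuine obstacle here: the argument is routine, and the only care needed is the bookkeeping of extended-real limits and the recognition that monotonicity of $(b_n)$ is used solely to make $\inf b_n$ equal $\lim b_n$.
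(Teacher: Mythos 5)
Your proof is correct. It differs from the paper's in mechanism: the paper disposes of the lemma in two lines by combining the standard inequality $\liminf (a_n + b_n) \le \liminf a_n + \liminf b_n$ (valid here because the monotone sequence $(b_n)$ converges, so $\liminf b_n = \limsup b_n$) with the reverse chain $\liminf a_n + \inf b_n = \liminf(a_n + \inf b_n) \le \liminf(a_n+b_n)$, the last step using only the pointwise bound $b_n \ge \inf b_n$. You instead argue from scratch by subsequence extraction, proving each inequality by passing to a subsequence realizing the relevant $\liminf$ and transporting the limit across the convergent factor $b_{n_k}\to b$. Both arguments hinge on the same single use of monotonicity, namely $\inf b_n = \lim b_n$. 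The paper's version is shorter and avoids any case split; yours is more self-contained and is more explicit about the extended-real bookkeeping, in particular isolating the degenerate configuration $\liminf a_n = +\infty$, $\inf b_n = -\infty$, where the right-hand side is formally $\infty - \infty$ --- a corner the paper's one-liner silently shares but which is harmless in the application, where all quantities involved are finite.
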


\begin{proof}
  Trivially, $\liminf (a_n + b_n) \le \liminf a_n + \liminf b_n$, but also
  \begin{equation*}
    \liminf a_n + \liminf b_n = \liminf a_n + \inf b_n = \liminf (a_n + \inf b_n) \le \liminf (a_n + b_n). \qedhere
  \end{equation*}
\end{proof}

\begin{lemma}
  \label{lem:infinite-triangle-inequality}
  If $\mathbf{d}$ is a sequence of complex numbers for which
  \begin{equation*}
    \liminf_{n \to \infty} \vpren{ \sum_{i=1}^n \abs{d_i} - \vabs{ \sum_{i=1}^n d_i } } = 0,
  \end{equation*}
  then $\mathbf{d}$ has constant phase (i.e., $\frac{d_j}{\abs{d_j}} = \frac{d_k}{\abs{d_k}}$ whenever $d_j \not= 0 \not= d_k$).
\end{lemma}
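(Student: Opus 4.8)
The plan is to reduce the $\liminf$ hypothesis to genuine equality of all partial sums by exploiting the monotonicity of the triangle-inequality deficit. Write $p_n := \sum_{i=1}^n d_i$, $q_n := \sum_{i=1}^n \abs{d_i}$, and set $D_n := q_n - \abs{p_n}$, which is $\ge 0$ by the triangle inequality (and $D_1 = 0$). First I would note that $(D_n)$ is nondecreasing:
\[
  D_n - D_{n-1} = \abs{d_n} - \big( \abs{p_{n-1} + d_n} - \abs{p_{n-1}} \big) \ge 0,
\]
again by the triangle inequality applied to $p_{n-1} + d_n$. A nondecreasing real sequence converges to its supremum and has $\liminf$ equal to that limit, so $\liminf_n D_n = 0$ forces $\sup_n D_n = 0$, hence $D_n = 0$ for every $n$; that is, $\abs{\sum_{i=1}^n d_i} = \sum_{i=1}^n \abs{d_i}$ for all $n$.

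The remaining step is the rigidity of the triangle inequality for complex numbers in each fixed length $n$. Fix $n$. If $p_n = 0$ then $q_n = \abs{p_n} = 0$ and $d_1 = \cdots = d_n = 0$, so we may assume $p_n \ne 0$ and set $\omega := p_n/\abs{p_n}$. Then
\[
  q_n = \abs{p_n} = \mathrm{Re}\big(\bar\omega p_n\big) = \sum_{i=1}^n \mathrm{Re}\big(\bar\omega d_i\big) \le \sum_{i=1}^n \abs{\bar\omega d_i} = q_n,
\]
so every inequality is an equality, giving $\mathrm{Re}(\bar\omega d_i) = \abs{d_i}$, i.e. $\bar\omega d_i = \abs{d_i} \ge 0$, for each $i \le n$. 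Thus every nonzero $d_i$ with $i \le n$ has phase $\omega$.

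Finally, since two nonzero terms $d_j$ and $d_k$ both appear once $n \ge \max\{j,k\}$, the common phases produced in the previous step for successive $n$ must all agree, so $d_j/\abs{d_j} = d_k/\abs{d_k}$ and $\mathbf{d}$ has constant phase. The only real content is the monotonicity observation for $(D_n)$; once it collapses the $\liminf$ hypothesis to equality for every $n$, the rest is standard, so I do not anticipate a genuine obstacle. (If one prefers, the passage from $\liminf D_n = 0$ to $D_n \equiv 0$ can also be phrased via the monotone-tail manipulations in the spirit of \autoref{lem:liminf-sum-decreasing}, but the direct argument above is shortest.)
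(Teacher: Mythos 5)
Your proof is correct, but it takes a genuinely different route from the paper's. The paper never establishes that the deficit vanishes for every $n$: it fixes a pair $j,k$ with $d_j \neq 0 \neq d_k$, uses the triangle inequality twice to show that for all $n \ge \max\{j,k\}$ the quantity $\sum_{i=1}^n \abs{d_i} - \vabs{\sum_{i=1}^n d_i}$ dominates the constant $\abs{d_j}+\abs{d_k}-\abs{d_j+d_k}$, and then takes the limit inferior to conclude $\abs{d_j}+\abs{d_k}=\abs{d_j+d_k}$, which is the two-term rigidity statement. You instead observe that the deficit $D_n$ is nondecreasing (a fact the paper does not use), so the $\liminf$ hypothesis collapses to $D_n \equiv 0$, and then you invoke the $n$-term rigidity of the triangle inequality via the real-part trick. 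Your argument is slightly longer but proves a stronger intermediate fact (exact equality of all partial sums, hence that all $d_i$ with $i \le n$ share the phase of $p_n$), and the monotonicity observation is a clean way to see why a $\liminf$ hypothesis suffices; the paper's version is more economical because it only ever needs the two-term case of rigidity and avoids the case analysis around $p_n = 0$. Every step of your argument checks out: the monotonicity computation, the passage from $\liminf = 0$ to $D_n \equiv 0$ for a nondecreasing nonnegative sequence, and the equality analysis with $\omega = p_n/\abs{p_n}$ are all valid, and the final consistency of the phases across different $n$ follows as you say since $p_n \neq 0$ whenever some $d_i \neq 0$ with $i \le n$.
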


\begin{proof}
  Consider any $j,k \in \mathbb{N}$ for which $d_j \not= 0 \not= d_k$.
  Then for $n \ge \max \{j,k\}$ we have
  \begin{equation*}
    \begin{array}{*5{>{\displaystyle}c}}
      \sum_{i=1}^n \abs{d_i} - \vabs{ \sum_{i=1}^n d_i }
      & \ge & \vpren{ \abs{d_j} + \abs{d_k} +  \sum_{\substack{i=1 \\ i\not=j,k}}^n \abs{d_i} }  & - & \vpren{ \abs{d_j + d_k} + \vabs{ \sum_{\substack{i=1 \\ i\not=j,k}}^n d_i } } \\[2.5em]
      & \ge & \abs{d_j} + \abs{d_k} & - & \abs{d_j + d_k}. \\
    \end{array}
  \end{equation*}
  Taking the limit inferior as $n \to \infty$ yields $\abs{d_j} + \abs{d_k} = \abs{d_j + d_k}$, which holds if and only if $d_j$ and $d_k$ have the same phase.
\end{proof}

We now prove an extension to compact operators of Thompson's result that an operator with diagonal $\tilde{d}$ is selfadjoint if the final inequality in \autoref{thm:thompson} is tight.
Additionally, it is a generalization of the finite dimensional result \autoref{thm:finite-trace-norm-inequality} above.
The limit inferior condition which appears below says precisely that $\abs{\mathbf{d}}^{*} \preccurlyeq \mathbf{s}$, i.e., $\abs{\mathbf{d}}^{*}$ is \emph{strongly majorized} by $\mathbf{s}$ in the sense of \cite[Definition 1.2]{KW-2010-JFA}.

\begin{theorem}
  \label{thm:compact-thompson-tight-case}
  If $\mathbf{d}$ is a diagonal of a compact operator $A$ with singular value sequence $\mathbf{s}$ and
  \begin{equation}
    \label{eq:strong-majorization}
    \liminf_{n \to \infty} \sum_{j=1}^n (s_j - |d|^{*}_j) = 0,
  \end{equation}
  then $A = UB$ for some diagonal unitary operator $U$ and positive compact operator $B$.
\end{theorem}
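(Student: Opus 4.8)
The plan is to reduce, via \autoref{prop:nonnegative-diagonals-suffice}, to the case of a nonnegative diagonal and then to prove that the operator is \emph{itself} positive. By that proposition there is a diagonal unitary $U_0$ such that $U_0A$ has diagonal $\abs{\mathbf{d}}$; since $U_0A$ is again compact with singular value sequence $\mathbf{s}$, and since \eqref{eq:strong-majorization} involves only $\abs{\mathbf{d}}^{*}$, it suffices to show that \emph{any} compact operator with nonnegative diagonal satisfying the hypotheses is positive: granting this, $U_0A\ge 0$, so $A=U_0^{*}(U_0A)$ is the desired product with $U=U_0^{*}$ and $B=U_0A$. (For nonnegative $\mathbf{d}$ the conclusion $A=UB$ is in fact \emph{equivalent} to $A\ge 0$, since a vanishing diagonal entry of a positive operator forces the corresponding row and column to vanish.) So assume $\mathbf{d}\ge 0$; we must show $A\ge 0$. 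If $A$ is trace-class this is immediate: then $\mathbf{s},\mathbf{d}\in\ell^{1}$ and \eqref{eq:strong-majorization} gives $\trace\abs{A}=\sum_j s_j=\sum_j d^{*}_j=\sum_j d_j=\trace A$, so \autoref{thm:finite-trace-norm-inequality} yields $cA\ge 0$ with $\abs{c}=1$, whence $c=1$ unless $A=0$ (compare traces), i.e.\ $A\ge 0$. We may therefore assume $A$ is not trace-class, i.e.\ $\mathbf{s}\notin\ell^{1}$, whence also $\mathbf{d}\notin\ell^{1}$ (otherwise the partial sums in \eqref{eq:strong-majorization} diverge) and in particular $\mathbf{d}$ has infinite support.

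For this remaining case I would argue with finite compressions. For finite $F\subseteq\N$ put $A_F:=P_FAP_F$; this is finite rank with $\trace A_F=\sum_{j\in F}d_j\in\R$ and, since a compression does not increase singular values, $\norm{A_F}_1\le\sum_{i=1}^{\abs{F}}s_i$. The quantitative tool is a stable form of \autoref{thm:finite-trace-norm-inequality}: for any finite-rank $X$ with real trace and polar decomposition $X=W\abs{X}$,
\begin{equation*}
  \norm{X-\abs{X}}\le\sqrt{2\,\norm{X}\,\big(\norm{X}_1-\trace X\big)}.
\end{equation*}
This follows from the Cauchy--Schwarz computation underlying \autoref{thm:finite-trace-norm-inequality}: writing $\mu_i$ for the eigenvalues of $\abs{X}$, $f_i$ for corresponding eigenvectors, and $w_{ii}=\angles{Wf_i,f_i}$ (so $\abs{w_{ii}}\le 1$), one has $\norm{X}_1-\trace X=\sum_i\mu_i(1-\operatorname{Re}w_{ii})\ge 0$ and $\norm{(W-I)f_i}^{2}\le 2(1-\operatorname{Re}w_{ii})$; expanding $X-\abs{X}=(W-I)\abs{X}=\sum_i\mu_i\big((W-I)f_i\big)\otimes f_i$ and applying Cauchy--Schwarz then gives $\norm{X-\abs{X}}^{2}\le\norm{X}\cdot 2\sum_i\mu_i(1-\operatorname{Re}w_{ii})$.

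It remains to produce finite sets $F_1\subseteq F_2\subseteq\cdots$ with $\bigcup_k F_k=\N$ and $\norm{A_{F_k}}_1-\trace A_{F_k}\to 0$; granting these, the estimate gives $\norm{A_{F_k}-\abs{A_{F_k}}}\le\sqrt{2s_1\big(\norm{A_{F_k}}_1-\trace A_{F_k}\big)}\to 0$, while $A_{F_k}\to A$ in the strong operator topology (because $P_{F_k}\to I$ strongly), so $\abs{A_{F_k}}\to A$ strongly; as each $\abs{A_{F_k}}\ge 0$ and the positive cone is strongly closed, $A\ge 0$. To build the $F_k$: by \autoref{prop:fans-theorem}, $\delta_n:=\sum_{i=1}^n(s_i-d^{*}_i)\ge 0$, and by hypothesis $\liminf_n\delta_n=0$, so choose $n_1<n_2<\cdots$ with $\delta_{n_k}\to 0$. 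Let $F_k$ be the union of the $n_k$ indices realizing $d^{*}_1,\dots,d^{*}_{n_k}$ with the first $z_k$ indices at which $\mathbf{d}$ vanishes, where $(z_k)$ is nondecreasing with $z_k s_{n_k+1}\to 0$ (possible since $s_n\to 0$; take $z_k\sim s_{n_k+1}^{-1/2}$, capped by the number of zero entries of $\mathbf{d}$ if finite) --- a routine check shows the $F_k$ can be taken nested with union $\N$. Then $\trace A_{F_k}=\sum_{i=1}^{n_k}d^{*}_i$ and $\abs{F_k}=n_k+z_k$, so
\begin{equation*}
  0\le\norm{A_{F_k}}_1-\trace A_{F_k}\le\sum_{i=1}^{n_k+z_k}s_i-\sum_{i=1}^{n_k}d^{*}_i=\delta_{n_k}+\sum_{i=n_k+1}^{n_k+z_k}s_i\le\delta_{n_k}+z_k s_{n_k+1}\to 0.
\end{equation*}

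The step I expect to be the main obstacle is precisely the construction of the $F_k$: they must eventually contain \emph{every} index (so that $A_{F_k}\to A$) while staying concentrated on the indices carrying the largest diagonal entries (so that the trace defect $\norm{A_{F_k}}_1-\trace A_{F_k}$ is controlled by $\delta_{n_k}$, not inflated by small-index entries of $\mathbf{d}$ that are small or zero). These demands genuinely conflict when $\mathbf{d}$ has infinitely many zero entries interspersed among infinitely many positive ones --- a real possibility here, e.g.\ $\mathbf{s}=(1/j)_{j\ge 1}$ and $\mathbf{d}=(1,0,\tfrac12,0,\tfrac13,0,\dots)$ --- and resolving the conflict is what forces the slow growth of $z_k$, balancing $z_k\to\infty$ against $z_k s_{n_k+1}\to 0$ using only $s_n\to 0$. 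The remaining ingredients (the reduction to $\mathbf{d}\ge 0$, the trace-class case, the Cauchy--Schwarz estimate, and the strong-operator limit) are routine.
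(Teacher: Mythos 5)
Your argument is correct, but it follows a genuinely different route from the paper's. After the common reduction to $\mathbf{d}\ge 0$ via \autoref{prop:nonnegative-diagonals-suffice}, the paper's proof is structural: it first shows the diagonal is \emph{maximal} (no finite perturbation compatible with Fan's inequality can increase a finite partial sum), then uses singular value decompositions of the $2\times 2$ compressions $A_{i,j}$ together with Thompson's \autoref{lem:2x2-trace-equal-trace-norm} to force every such compression --- hence $A$ itself --- to be selfadjoint, and finally kills $A_-$ in the decomposition $A=A_+-A_-$ by a majorization comparison using \autoref{lem:liminf-sum-decreasing}. You instead dispatch the trace-class case directly from \autoref{thm:finite-trace-norm-inequality} and, in the non-trace-class case, prove a quantitative stability version of that theorem's Cauchy--Schwarz argument, $\norm{X-\abs{X}}\le\sqrt{2\norm{X}(\norm{X}_1-\trace X)}$, apply it to carefully chosen finite compressions $A_{F_k}$ whose trace defect is controlled by $\delta_{n_k}+z_ks_{n_k+1}\to 0$, and conclude by strong closedness of the positive cone. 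I checked the details: the compression bound $\norm{A_F}_1\le\sum_{i=1}^{\abs{F}}s_i$, the balancing of $z_k\to\infty$ against $z_ks_{n_k+1}\to 0$, and the exhaustion $\bigcup F_k=\N$ (including the case of infinitely many interspersed zeros) all hold, and $\mathbf{s}\notin\ell^1$ guarantees $s_n>0$ so the choice $z_k\sim s_{n_k+1}^{-1/2}$ makes sense. Your approach buys a self-contained analytic proof that bypasses both the maximality lemma and Thompson's $2\times 2$ lemma, and the stability estimate is of independent interest; the paper's approach buys a reusable rigidity technique --- the same maximality-plus-$2\times 2$-compression scheme is what drives the proof of \autoref{thm:unitary-tight-selfadjoint}, where there is no trace to exploit and your approximation argument would not transfer directly.
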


\begin{proof}
  By \autoref{prop:nonnegative-diagonals-suffice} it suffices to prove that $A$ is positive where $\mathbf{d} = \mathbf{\abs{d}} \in c_0^+$.
  Let $\{e_k\}_{k=1}^{\infty}$ be an orthonormal basis in which $A$ has diagonal $\mathbf{d}$.

  The diagonal $\mathbf{d}$ is maximal in the sense that if $\mathbf{\tilde{d}} \in c_0^+$ is another diagonal of a compact operator with singular value sequence $\mathbf{s}$ which differs from $\mathbf{d}$ on a finite index set $F$ and the zero set of $\mathbf{d}$ contains the zero set of $\mathbf{\tilde{d}}$, then $\sum_{i \in F} \tilde{d}_i \le \sum_{i \in F} d_i$.
  To prove this claim let $\phi, \tilde{\phi} : \mathbb{N} \to \mathbb{N}$ be the injections which produce the nonincreasing rearrangements $\mathbf{d}^{*}, \mathbf{\tilde{d}}^{*}$.
  That is, $d^{*}_i = d_{\phi(i)}$ and similarly for $\tilde{\phi}, \tilde{d}$ (recall \autoref{not:background-notation} that nonincreasing rearrangements of $c_0^+$ sequences of infinite support eliminate the zeros).
  Then for $n \ge \max \phi^{-1}(F) \cup \tilde{\phi}^{-1}(F)$ and $r := \abs{F \setminus \phi(\mathbb{N})}$, since $i \in F\setminus \phi(\mathbb{N})$ implies $d_i = 0$, we have
  \begin{equation*}
    \sum_{i=1}^n \tilde{d}^{*}_i - \sum_{i \in F} \tilde{d}_i = \sum_{i=1}^{n-r} d^{*}_i - \sum_{i \in F} d_i.
  \end{equation*}
  Using this equation and \autoref{prop:fans-theorem}, we conclude
  \begin{equation*}
    0 \le \sum_{i=1}^n ( s_i - \tilde{d}^{*}_i )  = \sum_{i=1}^n ( s_i - d^{*}_i ) + \sum_{i=1}^r  d^{*}_{n-r+i} + \sum_{i \in F} ( d_i - \tilde{d}_i ).
  \end{equation*}
  Rearranging this inequality yields
  \begin{equation*}
    \sum_{i \in F} \tilde{d}_i \le \sum_{i=1}^n ( s_i - d^{*}_i ) + \sum_{i=1}^r d^{*}_{n-r+i} + \sum_{i \in F} d_i.
  \end{equation*}
  and taking the limit inferior we obtain
  \begin{align*}
    \sum_{i \in F} \tilde{d}_i
    &\le \liminf_{n \to \infty} \vpren{ \sum_{i=1}^n ( s_i - d^{*}_i ) + \sum_{i=1}^r d^{*}_{n-r+i} } + \sum_{i \in F} d_i  \\
    &= \liminf_{n \to \infty} \vpren{ \sum_{i=1}^n ( s_i - d^{*}_i ) } + \lim_{n \to \infty} \vpren{ \sum_{i=1}^r d^{*}_{n-r+i} } + \sum_{i \in F} d_i \\
    &= \sum_{i \in F} d_i.
  \end{align*}

  Then for $i\not=j$ consider the compression
  \(
  A_{i,j} :=
  \begin{psmallmatrix}
    d_i & b \\
    c & d_j \\
  \end{psmallmatrix}
  \)
  of $A$ to $\spans \{e_i,e_j\}$.
  Let $\sigma_1,\sigma_2$ be the singular values of $A_{i,j}$.
  Using the singular value decomposition we can find unitaries $U,V \in M_2(\mathbb{C})$ so that $UA_{i,j}V = \diag(\sigma_1,\sigma_2)$.
  Then for $\tilde{U} := U \oplus I$ and $\tilde{V} := V \oplus I$, the operator $\tilde{A} := \tilde{U}A\tilde{V}$ has diagonal $\mathbf{\tilde{d}}$ which is precisely $\mathbf{d}$ except $d_i,d_j$ are replaced by $\sigma_1,\sigma_2$.
  Because $\mathbf{d}$ is maximal, $\sigma_1 + \sigma_2 \le d_i + d_j$ and so we have equality which by \autoref{lem:2x2-trace-equal-trace-norm} implies $A_{i,j}$ is selfadjoint.
  Since $e_i,e_j$ were arbitrary, this means that $A$ is selfadjoint.

  To prove $A$ is positive, we first split it into its positive and negative parts $A = A_+ - A_-$.
  Then $\mathbf{s^+} := s(A_+) \le s(A_+ \oplus A_-) = s(A) = \mathbf{s}$.
  Let $\mathbf{d^+}$ be the diagonal of $A_+ = A + A_-$ in the basis $\{e_j\}_{j=1}^{\infty}$, and notice that $\mathbf{d^+} \ge \mathbf{d}$ because $A_- \ge 0$.
  By \autoref{prop:fans-theorem} we have
  \begin{equation*}
    0 \le \sum_{j=1}^n \big( s^+_j - (d^+)^{*}_j \big) \le \sum_{j=1}^n ( s^+_j - d^+_j ) = \sum_{j=1}^n ( s^+_j - s_j ) + \sum_{j=1}^n ( s_j - d_j ) + \sum_{j=1}^n ( d_j - d^+_j ).
  \end{equation*}
  Rearranging we obtain
  \begin{equation*}
    \sum_{j=1}^n ( d^+_j - d_j ) \le \sum_{j=1}^n ( s^+_j - s_j ) + \sum_{j=1}^n ( s_j - d_j ).
  \end{equation*}
  The sequence $\sum_{j=1}^n ( d^+_j - d_j )$ is nondecreasing and $\sum_{j=1}^n ( s^+_j - s_j )$ is nonincreasing, so taking the limit inferior of the preceding inequality and applying \eqref{eq:strong-majorization} and  \autoref{lem:liminf-sum-decreasing} yields
  \begin{equation*}
    0 \le \sup_n \sum_{j=1}^n ( d^+_j - d_j ) \le \inf_n \sum_{j=1}^n ( s^+_j - s_j ) \le 0.
  \end{equation*}
  Therefore $\mathbf{d^+} = \mathbf{d}$ and so the diagonal of $A_-$ is constantly zero, which implies $A_- = 0$ since $A_-$ is positive (i.e., because the conditional expectation onto the diagonal masa is faithful).
\end{proof}

\begin{remark}
  Recall that \autoref{thm:compact-schur-horn} guarantees that a nonincreasing sequence $\mathbf{d}>0$ is the diagonal of a positive compact operator with singular value sequence (in this case, eigenvalue sequence) $\mathbf{s}$ if $\mathbf{d} \prec \mathbf{s}$, which means $\mathbf{d} \prec_w \mathbf{s}$ and $\sum_{j=1}^{\infty} d_j = \sum_{j=1}^{\infty} s_j$.
  Since $\mathbf{d} \prec_w \mathbf{s}$ is guaranteed for any diagonal $\mathbf{d}$ by \autoref{prop:fans-theorem} (even for nonpositive operators), we should compare equality of these sums to condition \eqref{eq:strong-majorization}.
  It is clear that \eqref{eq:strong-majorization} implies $\sum_{j=1}^{\infty} d_j = \sum_{j=1}^{\infty} s_j$, but they are certainly not equivalent (consider $\mathbf{s} := (1/j)_{j=1}^{\infty}$ and $\mathbf{d} := (c/j)_{j=1}^{\infty}$ for $0<c<1$).
  Moreover, in \autoref{thm:compact-thompson-tight-case} the requirement that the limit inferior is zero cannot be weakened.
  Examination of the proofs of Cases 2 and 3 in \autoref{thm:compact-thompson-sufficiency} shows that when the limit inferior is nonzero we can sometimes choose the operator to be nonselfadjoint.
\end{remark}

Finally, we obtain an infinite dimensional analogue of \autoref{thm:finite-trace-norm-inequality} for both trace-class and non-trace-class operators.
For a complex-valued sequence $\mathbf{d} \in c_0$, we would like to rearrange $\mathbf{d}$ in order of nonincreasing modulus.
Of course, there are two problems with this.
Firstly, it is not possible to place $\mathbf{d}$ in order of nonincreasing modulus if it has infinite support and some zero terms.
We will deal with this case in the same way as defining $\abs{d}^{*}$ from $\abs{d}$; we ignore the zeros if it has infinite support.
Secondly, such a rearrangement is nonunique if there exist two unequal entries in the sequence with the same modulus.
Fortunately, nonuniqueness is not an issue for us because any such sequence will suffice for our purposes.
Let $\mathbf{d}^{\dag}$ denote any sequence satisfying $d^{\dag}_j = d_{\phi(j)}$ where $\phi$ is an injective function for which $\abs{d}^{*}_j = \abs{d_{\phi(j)}}$.
In other words, $\phi$ implements a nonincreasing rearrangement of $\abs{\mathbf{d}}$.
Note that $\abs{\mathbf{d}^{\dag}} = \abs{\mathbf{d}}^{*}$.

The next corollary is the analogue of \autoref{thm:finite-trace-norm-inequality} for compact operators.

\begin{theorem}
  \label{thm:compact-trace-norm-inequality-tight}
  If $A$ is a compact operator with diagonal $\mathbf{d}$, singular value sequence $\mathbf{s}$, and
  \begin{equation*}
    \liminf_{n \to \infty} \vpren{ \sum_{i=1}^n s_i - \vabs{ \sum_{i=1}^n d^{\dag}_i } } = 0
  \end{equation*}
  for some choice of rearrangement $\mathbf{d}^{\dag}$ of $\mathbf{d}$ in order of nonincreasing modulus, then $cA$ is positive for some scalar with $\abs{c} = 1$.
  In particular, if $A$ is trace-class, then $\trace \abs{A} = \abs{\trace A}$ implies $cA \ge 0$ for some $\abs{c} = 1$.
\end{theorem}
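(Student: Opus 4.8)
The plan is to decompose the hypothesis quantity into two nonnegative pieces: one is precisely the strong majorization condition \eqref{eq:strong-majorization} of \autoref{thm:compact-thompson-tight-case}, and the other triggers the constant-phase conclusion of \autoref{lem:infinite-triangle-inequality}. Set $T_1(n) := \sum_{i=1}^n s_i - \sum_{i=1}^n \abs{d^{\dag}_i}$ and $T_2(n) := \sum_{i=1}^n \abs{d^{\dag}_i} - \vabs{\sum_{i=1}^n d^{\dag}_i}$. Since $\abs{\mathbf{d}^{\dag}} = \abs{\mathbf{d}}^{*}$ we have $T_1(n) = \sum_{i=1}^n (s_i - \abs{d}^{*}_i)$, so $T_1(n) \ge 0$ by Fan's \autoref{prop:fans-theorem}, while $T_2(n) \ge 0$ by the triangle inequality, and $T_1(n) + T_2(n) = \sum_{i=1}^n s_i - \vabs{\sum_{i=1}^n d^{\dag}_i}$. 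As both summands are nonnegative and their sum has limit inferior zero, each of $\liminf_n T_1(n)$ and $\liminf_n T_2(n)$ is zero. (If $\mathbf{d} = \mathbf{0}$, then $\liminf_n T_1(n)=0$ forces $\mathbf{s}=\mathbf{0}$, hence $A=0$ and there is nothing to prove; so we assume $\mathbf{d} \ne \mathbf{0}$.)

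First I would feed $\liminf_n T_2(n) = 0$ into \autoref{lem:infinite-triangle-inequality} applied to the sequence $\mathbf{d}^{\dag}$, concluding that $\mathbf{d}^{\dag}$ has constant phase. Since $\mathbf{d}^{\dag}$ is a rearrangement of $\mathbf{d}$ (with the zeros deleted when $\mathbf{d}$ has infinite support), the sequence $\mathbf{d}$ itself has constant phase, so there is a unimodular scalar $c$ with $c d_j = \abs{d_j}$ for every $j$. Next, $\liminf_n T_1(n) = 0$ is exactly condition \eqref{eq:strong-majorization}, so \autoref{thm:compact-thompson-tight-case} applies to $A$ and produces a diagonal unitary $U$ (relative to the basis $\mathfrak{e} = \{e_j\}$ realizing the diagonal $\mathbf{d}$) together with a positive compact operator $B$ such that $A = UB$.

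The remaining task is to show that multiplying by $c$ turns this factorization into $cA = B \ge 0$. Write $b_j := \angles{Be_j,e_j} \ge 0$ and $u_j := \angles{Ue_j,e_j}$, so that $d_j = \angles{Ae_j,e_j} = u_j b_j$; combining with $c d_j = \abs{d_j}$ gives $cu_j b_j = \abs{d_j}$, and since $cu_j$ is unimodular while $b_j \ge 0$ this forces $cu_j = 1$ (and $b_j = \abs{d_j}$) whenever $d_j \ne 0$, whereas $d_j = 0$ forces $b_j = 0$ because $B \ge 0$. For indices with $b_j = 0$ we get $Be_j = 0$, hence $\ran B \subseteq \overline{\ran B} = (\ker B)^{\perp} \subseteq \overline{\lspan}\{e_j : b_j \ne 0\}$, a subspace on which $cU$ acts as the identity. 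Therefore $cUBx = Bx$ for every $x$, i.e. $cA = cUB = B \ge 0$.

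Finally, the trace-class addendum is immediate: Fan's \autoref{prop:fans-theorem} gives $\mathbf{d} \in \ell^1$, so $\sum_j d^{\dag}_j = \sum_j d_j = \trace A$ independently of the rearrangement and $\sum_j s_j = \trace\abs{A}$, whence $\trace\abs{A} = \abs{\trace A}$ says exactly that $\lim_n \bigl(\sum_{i=1}^n s_i - \vabs{\sum_{i=1}^n d^{\dag}_i}\bigr) = 0$, and the main statement applies. The step I expect to be most delicate is the reconciliation in the third paragraph: \autoref{thm:compact-thompson-tight-case} only delivers the factorization $A = UB$ rather than positivity of $A$ outright, so one must verify carefully that the diagonal unitary it produces becomes trivial after multiplication by $c$ on the part of the space that matters, namely $\ran B$. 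Everything else is bookkeeping once the two-piece decomposition of the hypothesis is in hand.
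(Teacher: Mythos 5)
Your proof is correct and follows essentially the same route as the paper: the same decomposition of the hypothesis into $T_1(n)+T_2(n)$, \autoref{lem:infinite-triangle-inequality} to extract the constant phase $c$, and \autoref{thm:compact-thompson-tight-case} to get positivity. The only difference is in the last step: the paper applies the tight-case theorem to $cA$ (whose diagonal $\abs{\mathbf{d}}$ is already nonnegative) and reads off positivity directly from that theorem's proof, whereas you apply it to $A$ itself, take the stated factorization $A=UB$ at face value, and then verify explicitly that $cU$ acts as the identity on $\overline{\ran B}$ --- a more careful, but equivalent, way of closing the same gap.
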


\begin{proof}
  By the triangle inequality and \autoref{prop:fans-theorem}, we have
  \begin{equation*}
    0 \le \liminf_{n \to \infty} \vpren{ \sum_{i=1}^n s_i - \sum_{i=1}^n \vabs{d^{\dag}_i} } \le \liminf_{n \to \infty} \vpren{ \sum_{i=1}^n s_i - \vabs{ \sum_{i=1}^n d^{\dag}_i } } \le 0.
  \end{equation*}
  Along with the hypothesis and basic properties of the limits inferior and superior, this yields
  \begin{align*}
    0
    &\le \liminf_{n \to \infty} \vpren{ \sum_{i=1}^n \vabs{d^{\dag}_i} - \vabs{ \sum_{i=1}^n d^{\dag}_i }} \displaybreak[0]\\
    &= \liminf_{n \to \infty} \vpren{ \vpren{ \sum_{i=1}^n s_i - \vabs{ \sum_{i=1}^n d^{\dag}_i } } - \vpren{ \sum_{i=1}^n s_i - \sum_{i=1}^n \vabs{d^{\dag}_i} } } \\
    &\le \liminf_{n \to \infty} \vpren{ \sum_{i=1}^n s_i - \vabs{ \sum_{i=1}^n d^{\dag}_i } } + \limsup_{n \to \infty} -\vpren{ \sum_{i=1}^n s_i - \sum_{i=1}^n \vabs{d^{\dag}_i} }  \\
    &= \liminf_{n \to \infty} \vpren{ \sum_{i=1}^n s_i - \vabs{ \sum_{i=1}^n d^{\dag}_i } } - \liminf_{n \to \infty} \vpren{ \sum_{i=1}^n s_i - \sum_{i=1}^n \vabs{d^{\dag}_i} }  \\
    &= 0.
  \end{align*}
  So by \autoref{lem:infinite-triangle-inequality}, there is some $\abs{c} = 1$ for which $c\mathbf{d} = \abs{\mathbf{d}}$.
  Note $\abs{\mathbf{d}}$ is the diagonal of the compact operator $cA$ and $s(cA) = \mathbf{s}$ and satisfies the hypotheses of \autoref{thm:compact-thompson-tight-case} since $\abs{\mathbf{d}}^{*} = \abs{\mathbf{d}^{\dag}}$.
  Thus $cA$ is positive.

  When $A$ is trace-class, \autoref{thm:compact-trace-norm-inequality-tight} provides a verbatim generalization of \autoref{thm:finite-trace-norm-inequality}.
  Indeed, this is because if $A$ is trace-class then
  \begin{equation*}
    \trace \abs{A} - \abs{ \trace A } = \lim_{n \to \infty} \vpren{ \sum_{i=1}^n s_i - \vabs{ \sum_{i=1}^n d^{\dag}_i } }. \qedhere
  \end{equation*}
\end{proof}

In order to address the question of the diagonals of a possibly selfadjoint unitary operator in the extremal (equality) case of \autoref{thm:unitary-thompson-sufficiency} (condition \eqref{eq:unitary-thompson-condition}), we need another $2 \times 2$ lemma due to Thompson.
Departing slightly from the case $\mathbf{d} \ge 0$ to a single negative diagonal entry, equality in \autoref{thm:unitary-tight-selfadjoint} forces selfadjointness.

\begin{lemma}[Thompson \protect{\cite[Lemma 4]{Tho-1977-SJAM}}]
  \label{lem:2x2-tight-thompson-selfadjoint}
  If $A \in M_2(\mathbb{C})$ has diagonal entries $d_1 > 0 > d_2$ with $d_1 \ge \abs{d_2}$ and singular values $s_1 \ge s_2$, then $s_1 - s_2 \ge d_1 - \abs{d_2}$ with equality if and only if $A$ is selfadjoint.
\end{lemma}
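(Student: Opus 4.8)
The plan is to reduce the $2\times 2$ inequality about singular values and diagonal entries to a concrete computation with the singular values of an explicit parametrized matrix, mimicking the strategy behind Lemma~\ref{lem:2x2-trace-equal-trace-norm} (Thompson's Lemma~3). First I would write a general matrix in $M_2(\mathbb C)$ with the prescribed diagonal as
\[
  A = \begin{pmatrix} d_1 & b \\ c & d_2 \end{pmatrix},
  \qquad d_1 > 0 > d_2,\ d_1 \ge \abs{d_2},
\]
and recall that the singular values $s_1 \ge s_2 \ge 0$ are determined by the two elementary symmetric functions $s_1 s_2 = \abs{\det A} = \abs{d_1 d_2 - bc}$ and $s_1^2 + s_2^2 = \trace(A^*A) = d_1^2 + d_2^2 + \abs{b}^2 + \abs{c}^2$. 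Hence $(s_1 - s_2)^2 = s_1^2 + s_2^2 - 2 s_1 s_2 = d_1^2 + d_2^2 + \abs{b}^2 + \abs{c}^2 - 2\abs{d_1 d_2 - bc}$, and the target inequality $s_1 - s_2 \ge d_1 - \abs{d_2} = d_1 + d_2$ (using $d_2 < 0$ and $d_1 \ge \abs{d_2}$, so $d_1 + d_2 \ge 0$) is equivalent, after squaring both nonnegative sides, to
\[
  \abs{b}^2 + \abs{c}^2 - 2\abs{d_1 d_2 - bc} \ge 2 d_1 d_2 + 2 d_1 \abs{d_2} - 2 d_1^2,
\]
i.e. to $\abs{b}^2 + \abs{c}^2 \ge 2\abs{d_1 d_2 - bc} + 2 d_1 d_2$, after simplification using $d_1 \abs{d_2} = -d_1 d_2$.

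Next I would prove this reduced inequality. Since $d_1 d_2 < 0$, write $-d_1 d_2 = d_1\abs{d_2} > 0$ and use the triangle inequality $\abs{d_1 d_2 - bc} \le \abs{d_1 d_2} + \abs{bc} = d_1\abs{d_2} + \abs{b}\abs{c}$; then it suffices to show $\abs{b}^2 + \abs{c}^2 \ge 2\abs{b}\abs{c} + 2 d_1 d_2 + 2 d_1\abs{d_2} = 2\abs{b}\abs{c}$, which is just $(\abs{b}-\abs{c})^2 \ge 0$. This establishes the inequality. For the equality analysis I would track when every inequality used is tight: the square-root step requires $d_1 + d_2 \ge 0$ (automatic); the triangle inequality $\abs{d_1 d_2 - bc} = d_1\abs{d_2} + \abs{b}\abs{c}$ forces $d_1 d_2$ and $-bc$ to be nonnegative real multiples of a common unit complex number — but $d_1 d_2 < 0$ is real and negative, so this forces $bc$ to be real and nonnegative, i.e. $bc \ge 0$; and $(\abs{b}-\abs{c})^2 = 0$ forces $\abs{b} = \abs{c}$. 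Together $bc \ge 0$ and $\abs b = \abs c$ give $b = \bar{c}$ when both are nonzero (writing $b = \abs b e^{i\theta}$, $c = \abs c e^{i\psi}$, positivity of $bc$ gives $\theta + \psi \equiv 0$, so $c = \abs b e^{-i\theta} = \bar b$), and if one of $b,c$ is zero then so is the other; in every case $c = \bar b$, which together with the real diagonal entries is exactly selfadjointness of $A$.

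Conversely, if $A = A^*$ then $c = \bar b$, and the eigenvalues of $A$ are real with product $d_1 d_2 - \abs b^2 < 0$, so $A$ has one positive and one negative eigenvalue $\mu_1 > 0 > \mu_2$; the singular values are $s_1 = \max\{\mu_1, -\mu_2\}$, $s_2 = \min\{\mu_1,-\mu_2\}$, so $s_1 - s_2 = \abs{\mu_1 + \mu_2} = \abs{d_1 + d_2} = d_1 + d_2 = d_1 - \abs{d_2}$, giving equality. The main obstacle I anticipate is bookkeeping in the equality case: one must handle degenerate subcases ($b = 0$ or $c = 0$, and the possibility $d_1 = \abs{d_2}$) carefully, and verify that the chain of tightness conditions genuinely pins down $c = \bar b$ rather than merely some weaker phase relation; the inequality direction itself is a short two-line estimate once the reduction via $s_1 s_2$ and $s_1^2 + s_2^2$ is in place.
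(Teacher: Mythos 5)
Your proof is correct. Note that the paper does not prove this lemma at all --- it is quoted as a black-box citation of Thompson's Lemma~4 --- so you have supplied a self-contained elementary argument where the paper relies on the reference. Your reduction via $s_1^2+s_2^2=\trace(A^{*}A)$ and $s_1s_2=\abs{\det A}$ is sound: since both $s_1-s_2$ and $d_1+d_2=d_1-\abs{d_2}$ are nonnegative, squaring is legitimate, and the chain
\begin{equation*}
  (s_1-s_2)^2-(d_1+d_2)^2=\abs{b}^2+\abs{c}^2+2\abs{d_1d_2}-2\abs{d_1d_2-bc}\ \ge\ (\abs{b}-\abs{c})^2\ \ge\ 0
\end{equation*}
gives the inequality, with equality forcing tightness of the triangle inequality (hence $bc\ge 0$ real, using $d_1d_2<0$ strictly) together with $\abs{b}=\abs{c}$, which as you argue pins down $c=\bar{b}$ in all subcases. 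The converse via the eigenvalues of a selfadjoint $A$ with $\det A=d_1d_2-\abs{b}^2<0$ is also correct. One harmless slip: the intermediate right-hand side ``$2d_1d_2+2d_1\abs{d_2}-2d_1^2$'' in your first display is not what $(d_1+d_2)^2-(d_1^2+d_2^2)$ equals (it should be just $2d_1d_2$), but the final reduced inequality $\abs{b}^2+\abs{c}^2\ge 2\abs{d_1d_2-bc}+2d_1d_2$ that you actually work with is the correct one, so nothing downstream is affected.
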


\begin{theorem}
  \label{thm:unitary-tight-selfadjoint}
  If $\mathbf{d} = (-d_1,d_2,d_3,\ldots)$ where $d_j > 0$ for all $j \ge 1$ satisfying
  \begin{equation*}
    2(1-d_1) = \sum_{i=1}^{\infty} (1-d_i)
  \end{equation*}
  is the diagonal of a unitary and any such unitary is necessarily selfadjoint.
\end{theorem}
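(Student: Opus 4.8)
The plan is to prove both halves of the statement: that $\mathbf d$ is the diagonal of some unitary, and that every unitary with diagonal $\mathbf d$ is selfadjoint. Since a diagonal of a contraction is bounded above by one, we may and do take $d_j\le 1$ for all $j$. Rewriting the hypothesis as $1-d_1=\sum_{j\ge 2}(1-d_j)$ with all summands nonnegative gives $1-d_j\le 1-d_1$, hence $d_j\ge d_1$, for every $j\ge 2$; thus $\inf_j\abs{d_j}=d_1$. Consequently $\abs{\mathbf d}=(d_1,d_2,\dots)$ satisfies \eqref{eq:unitary-thompson-condition} with equality, so \autoref{thm:unitary-thompson-sufficiency} furnishes a unitary $W$ with diagonal $\abs{\mathbf d}$, and then (exactly as in \autoref{prop:nonnegative-diagonals-suffice}) the unitary $\diag(-1,1,1,\dots)\,W$ has diagonal $\mathbf d$.

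For the selfadjointness, let $V$ be any unitary with diagonal $\mathbf d$ in a basis $\{e_j\}$, and on the set of sequences that are diagonals of unitaries define the functional $\Phi(\mathbf e):=\sum_j\bigl(1-\abs{e_j}\bigr)-2\bigl(1-\inf_j\abs{e_j}\bigr)$. The necessity half of \autoref{thm:unitary-thompson-sufficiency} (equivalently \autoref{thm:unitary-thompson-necessity} combined with \autoref{prop:nonnegative-diagonals-suffice}) says $\Phi\ge 0$ on that set, while the extremal hypothesis is precisely $\Phi(\mathbf d)=0$. Hence $\mathbf d$ \emph{minimizes} $\Phi$ among diagonals of unitaries, and any diagonal of a unitary obtained from $\mathbf d$ by a finitely supported modification has $\Phi$-value $\ge 0$.

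Now fix $i\ne j$, take a singular value decomposition $XV_{ij}Y=\diag(\tau_1,\tau_2)$ with $\tau_1\ge\tau_2\ge 0$ the singular values of the $2\times 2$ compression $V_{ij}$ of $V$ to $\spans\{e_i,e_j\}$, and set $V':=(X\oplus I)V(Y\oplus I)$. This $V'$ is again unitary, and its diagonal $\mathbf d'$ is $\mathbf d$ with the entries in positions $i,j$ replaced by $\tau_1,\tau_2$. Feeding $\mathbf d'$ into $\Phi$ and using $\Phi(\mathbf d')\ge 0=\Phi(\mathbf d)$ yields an inequality between $\tau_1,\tau_2$ and the untouched $d_k$; combined with $\tau_1+\tau_2\ge d_i+d_j$ from \autoref{lem:2x2-trace-equal-trace-norm} when $i,j\ge 2$, or with $\tau_1-\tau_2\ge d_j-d_1$ from \autoref{lem:2x2-tight-thompson-selfadjoint} when $i=1$ (applicable since $d_j\ge d_1>0$), this forces equality in the relevant $2\times 2$ inequality, so by those lemmas $V_{ij}$ is positive (if $i,j\ge 2$) or selfadjoint (if $i=1$). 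With every $2\times 2$ compression of $V$ selfadjoint, the matrix entries of $V$ satisfy $\langle Ve_j,e_i\rangle=\overline{\langle Ve_i,e_j\rangle}$ and $\langle Ve_i,e_i\rangle\in\mathbb R$, so $V=V^*$.

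The crux — and where I expect to spend the most care — is the inequality extracted from $\Phi(\mathbf d')\ge 0$: because $\Phi$ involves $\inf_j\abs{d'_j}$, and after the rotation this infimum may equal $d_1$ (when $i,j\ge 2$), or $\tau_2$, or $\min_{k\ne i,j}d_k$, the computation branches according to which of these is smallest, and one must verify that in each branch the extracted inequality together with the appropriate $2\times 2$ lemma does pin down equality — while in one branch ($i,j\ge 2$ with the infimum dropping below $d_1$) the two bounds are mutually contradictory, so that branch never occurs. A separate, easy point is the degenerate case $d_j=d_1$ for some $j\ge 2$: the extremal equality then forces $d_k=1$ for all remaining $k$, so $\abs{d_k}=1$ makes each such $e_k$ an eigenvector of $V$ and $V$ orthogonally decomposes with a $2\times 2$ unitary summand of diagonal $(-d_1,d_1)$ and singular values $(1,1)$, which is selfadjoint by \autoref{lem:2x2-tight-thompson-selfadjoint}.
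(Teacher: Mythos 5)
Your proposal is correct and follows the same skeleton as the paper's proof: existence via \autoref{thm:unitary-thompson-sufficiency}; extremality of $\mathbf{d}$ among diagonals of unitaries; diagonalization of each $2\times 2$ compression by its singular value decomposition, reinserted by multiplying $V$ on the left and right by unitaries; and \autoref{lem:2x2-trace-equal-trace-norm} together with \autoref{lem:2x2-tight-thompson-selfadjoint} to force each compression to be selfadjoint. The one point of divergence is exactly the step you flag as the crux. You work with the sharp functional $\Phi(\mathbf{e})=\sum_j(1-\abs{e_j})-2(1-\inf_j\abs{e_j})$ and must therefore branch on where the infimum of the perturbed diagonal is attained. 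The branches do resolve as you claim, although there are two vacuous branches rather than one: for $i,j\ge 2$ the branch $\tau_2<d_1$ contradicts $\tau_1+\tau_2\ge d_i+d_j$, and for $i=1$ the branch $\tau_2>\min_{k\ne 1,j}d_k$ contradicts $\tau_1-\tau_2\ge d_j-d_1$ (it does not ``pin down equality''). The paper sidesteps this case analysis entirely by using, in place of $\Phi(\mathbf{d}')\ge 0$, only the weaker consequence $2(1-\tilde d_1)\le\sum_j(1-\abs{\tilde d_j})$ of \eqref{eq:unitary-thompson-condition}, which holds no matter where the infimum is attained because $\tilde d_1\ge\inf_j\abs{\tilde d_j}$; subtracting this from the hypothesis $2(1-d_1)=\sum_j(1-d_j)$ over the modified index set $F$ gives the single inequality \eqref{eq:extreme-unitary-diagonal}, $\sum_{i\in F}(\tilde d_i-d_i)\le 2(\tilde d_1-d_1)$, which hands you $\sigma_1+\sigma_2\le d_j+d_k$ and $\sigma_1-\sigma_2\le d_j-d_1$ directly. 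Finally, your separate treatment of the degenerate case $d_j=d_1$ is harmless but unnecessary: \autoref{lem:2x2-tight-thompson-selfadjoint} requires only $d_j\ge d_1>0$, not strict inequality.
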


\begin{proof}
  The fact that $\mathbf{d}$ is the diagonal of a unitary $U$ is a consequence of \autoref{thm:unitary-thompson-sufficiency}.
  The rest of the proof is analogous to the proof of \autoref{thm:compact-thompson-tight-case}, but we need both \autoref{lem:2x2-trace-equal-trace-norm} and \autoref{lem:2x2-tight-thompson-selfadjoint}.
  The sequence $\mathbf{d}$ is extremal in the following sense.
  If $\mathbf{\tilde{d}} = (-\tilde{d}_1,\tilde{d}_2,\tilde{d}_3,\ldots)$ where $\tilde{d}_j > 0$ is another real-valued sequence of a unitary operator with a single negative entry in the first coordinate and disagrees with $\mathbf{d}$ on a finite index set $F$, then
  \begin{equation}
    \label{eq:extreme-unitary-diagonal}
    2(1-d_1) - \sum_{i \in F} (1-d_i) \ge 2(1-\tilde{d}_1) - \sum_{i \in F} (1-\tilde{d}_i).
  \end{equation}
  Indeed,
  \begin{align*}
    0
    &= 2(1-d_1) - \sum_{i=1}^{\infty} (1-d_i) \\
    &= 2(1-d_1) - \sum_{i \in F} (1-d_i) - \sum_{i \notin F} (1-d_i) \\
    &= 2(1-d_1) - \sum_{i \in F} (1-d_i) - \sum_{i \notin F} (1-\tilde{d}_i) \\
    &= 2(1-d_1) - \sum_{i \in F} \Big( (1-d_i) - (1-\tilde{d}_i) \Big) - \sum_{i=1}^{\infty} (1-\tilde{d}_i)  \\
    &= 2\big((1-d_1)-(1-\tilde{d}_1)\big) - \sum_{i \in F} \Big( (1-d_i) - (1-\tilde{d}_i) \Big) + 2(1-\tilde{d}_1) - \sum_{i=1}^{\infty} (1-\tilde{d}_i)  \\
    &\le 2\big((1-d_1)-(1-\tilde{d}_1)\big) - \sum_{i \in F} \Big( (1-d_i) - (1-\tilde{d}_i) \Big),
  \end{align*}
  where the first equality is by hypothesis and the inequality is due to \autoref{thm:unitary-thompson-sufficiency}.

  Now, take any pair of diagonal entries $d_j, d_k$ with $j,k > 1$, and look at the compression $U_{j,k}$ of $U$ to $\spans\{e_j,e_k\}$.
  $U_{j,k}$ has singular values $\sigma_1, \sigma_2$, and using the singular value decomposition of $U_{j,k}$ we can multiply $U$ on the left and right by unitaries to get a new unitary whose diagonal is $\mathbf{d}$ with $d_j, d_k$ replaced by $\sigma_1,\sigma_2$.
  By \eqref{eq:extreme-unitary-diagonal}, setting $\tilde{d}_j = \sigma_1, \tilde{d}_k = \sigma_2$ we conclude that $\sigma_1 + \sigma_2 \le d_j + d_k$, and therefore by \autoref{lem:2x2-trace-equal-trace-norm}, $U_{j,k}$ is selfadjoint.

  Next consider $U_{1,j}$ with singular values $\sigma_1 \ge \sigma_2$.
  Note that \eqref{eq:extreme-unitary-diagonal} guarantees $d_1 \le d_j$ for all $j \ge 2$.
  Then using the singular value decomposition of $U_{1,j}$ and multiplying also by
  \(
  \begin{psmallmatrix}
    -1            & 0 \\
    \hphantom{-}0 & 1 \\
  \end{psmallmatrix}
  \),
  we can multiply $U$ on the left and right by unitaries to get a new unitary  whose diagonal is $\mathbf{d}$ with $d_1, d_j$ replaced by $\sigma_1 - \sigma_2$.
  By \eqref{eq:extreme-unitary-diagonal}, we have that $\sigma_1 - \sigma_2 \le d_j - d_1$, and so by \autoref{lem:2x2-tight-thompson-selfadjoint} $U_{1,j}$ is selfadjoint.
  Since $U_{j,k}$ is selfadjoint for all $j,k \in \mathbb{N}$, $U$ is selfadjoint.
\end{proof}

\section{Open questions}
\label{sec:open-questions}

\begin{question}
  Characterize the diagonals of normal operators with specified singular values.
\end{question}

\begin{question}
  Characterize the diagonals of partial isometries with kernel dimension $n$ and co-kernel dimension $m$ ($1 \le n,m \le \infty$).
  In particular, find the diagonals of what we call \emph{square} partial isometries.
  An operator $A$ is said to be \emph{square} if $A^{*}A$ and $AA^{*}$ have the same dimension of their kernels.
\end{question}

\begin{question}
  Characterize the diagonals of positive compact operators with one dimensional kernel, and then those with finite dimensional kernel.
\end{question}

\emergencystretch=4em
\printbibliography

\end{document}